\title[{Free transformations of $S^1 \times S^n$ of square-free odd period}]{Free transformations of $S^1 \times S^n$\\ of square-free odd period}
\author[Q.~Khan]{Qayum Khan}
\address{Department of Mathematics, Saint Louis University, St. Louis MO 63103 U.S.A.}
\email{khanq@slu.edu}
\patchcmd{\epigraph}{\@epitext{#1}}{\itshape\@epitext{#1}}{}{}
\definecolor{dark-red}{rgb}{0.4,0.15,0.15}
\definecolor{dark-blue}{rgb}{0.15,0.15,0.4}
\definecolor{medium-blue}{rgb}{0,0,0.5}
\newtheorem{thm}{Theorem}[section]
\newtheorem{cor}[thm]{Corollary}
\newtheorem{lem}[thm]{Lemma}
\newtheorem{prop}[thm]{Proposition}
\newtheorem*{unthm}{Theorem}
\theoremstyle{definition}
\newtheorem{rem}[thm]{Remark}
\newtheorem{quest}[thm]{Question}
\DeclareMathAlphabet{\matheurm}{U}{eur}{m}{n}
\newcommand{\cA}{\mathcal{A}}
\newcommand{\F}{\mathbb{F}}
\newcommand{\cH}{\mathcal{H}}
\newcommand{\bL}{\mathbb{L}}
\newcommand{\C}{\mathbb{C}}
\newcommand{\cM}{\mathcal{M}}
\newcommand{\Q}{\mathbb{Q}}
\newcommand{\R}{\mathbb{R}}
\newcommand{\RP}{\mathbb{RP}}
\newcommand{\cS}{\mathcal{S}}
\newcommand{\Z}{\mathbb{Z}}
\newcommand{\Nil}{\mathrm{Nil}}
\newcommand{\TOP}{\mathrm{TOP}}
\newcommand{\longra}{\longrightarrow}
\newcommand{\hookra}{\hookrightarrow}
\newcommand{\thra}{\twoheadrightarrow}
\newcommand{\iso}{\cong}
\newcommand{\bdry}{\partial}
\newcommand{\eps}{\varepsilon}
\newcommand{\gens}[1]{\left\langle #1 \right\rangle}
\newcommand{\ol}[1]{\overline{#1}}
\newcommand{\wh}[1]{\widehat{#1}}
\newcommand{\wt}[1]{\widetilde{#1}}
\newcommand{\xra}[1]{\xrightarrow{#1}}
\newcommand{\Torus}{\mathrm{Torus}}
\newcommand{\Cok}{\mathrm{Cok}}
\newcommand{\Cl}{\mathrm{Cl}}
\newcommand{\Wh}{\mathrm{Wh}}
\newcommand{\id}{\mathrm{id}}
\newcommand{\rel}{\mathrm{rel}~}
\newcommand{\tors}{\mathrm{tors}}
\newcommand{\Ker}{\mathrm{Ker}}
\newcommand{\Img}{\mathrm{Im}}
\newcommand{\hMod}{\mathrm{hMod}}
\newcommand{\Homeo}{\mathrm{Homeo}}
\newcommand{\Map}{\mathrm{Map}}
\newcommand{\Aut}{\mathrm{Aut}}
\newcommand{\Out}{\mathrm{Out}}
\newcommand{\Diff}{\mathrm{Diff}}
\newcommand{\Hom}{\mathrm{Hom}}
\newcommand{\Isom}{\mathrm{Isom}}
\begin{document}

\begin{abstract}
Let $n$ be a positive integer, and let $\ell>1$ be square-free odd.
We classify the set of equivariant homeomorphism classes of free $C_\ell$-actions on the product $S^1 \times S^n$ of spheres, up to indeterminacy bounded in $\ell$.
The description is expressed in terms of number theory.

The techniques are various applications of surgery theory and homotopy theory, and we perform a careful study of $h$-cobordisms.
The $\ell=2$ case was completed by B~Jahren and S~Kwasik (2011).
The new issues for the case of $\ell$ odd are the presence of nontrivial ideal class groups and a group of equivariant self-equivalences with quadratic growth in $\ell$.
The latter is handled by the composition formula for structure groups of A~Ranicki (2009).
\end{abstract}

\maketitle

\section{Introduction}

Let $\ell>1$ be an integer.
Consider the $\ell$-periodic homeomorphism without fixed points:
\[
T_\ell: S^1 \times S^n \longra S^1 \times S^n ~;~ (z,x) \longmapsto (\zeta_\ell z, x) \enspace\text{ where }\enspace \zeta_\ell := e^{i\, 2\pi/\ell} \in \C.
\]
Write $\cA_\ell^n$ for the set of conjugacy classes $(C)$ in $\Homeo(S^1\times S^n)$ of those cyclic subgroups $C$ of order $\ell$ without fixed points.
B~Jahren and S~Kwasik classified the case $\ell=2$ \cite{JK}.

Recall the Euler totient function $\varphi$ is the number of units modulo a given natural number.
Let $d>1$.
A partition $Q_d^k$ of $\Z_d^\times$ is given by $[q]=[q']$ if $a^k q \equiv \pm q' \!\!\pmod{d}$ for some $a$.
The map $(g \longmapsto g^{-1})$ on the cyclic group $C_d$ induces an involution $\iota$ on the projective class group $\Wh_0(C_d) := K_0(\Z C_d)/K_0(\Z)$ with coinvariants  $H_0(C_2;\Wh_0(C_d)) := \Wh_0(C_d)/(1-\iota)$.

\begin{thm}[Classification Theorem]\label{thm:classification}
Let $\ell > 1$ be square-free odd.
Then $\cA_\ell^{2k} = \{(T_\ell)\}$ for all $k>0$ and $\cA_\ell^1 = \{(T_\ell)\}$.
Otherwise for each $k>1$, there is a finite-to-one surjection
\[\begin{tikzcd}
\displaystyle \coprod_{1 < d \,|\, \ell}\; Q_d^k ~\times~ \Z^{(d-1)/2} ~\times~ H_0(C_2;\Wh_0(C_d))
\arrow[two heads]{r} & \cA_\ell^{2k-1} - \{(T_\ell)\}.
\end{tikzcd}\]
The $d$-indexed terms in the disjoint union have disjoint images.
In the $d$-th image, each point-preimage has cardinality dividing $8 \gcd(k,\varphi(d)/2)$, which has bounded growth in $\ell$.
In particular, the set $\cA_\ell^{2k-1}$ of free $C_\ell$-actions on $S^1 \times S^{2k-1}$ is countably infinite if $k>1$.
\end{thm}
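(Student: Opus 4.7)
The plan is to replace each free $C_\ell$-action with its quotient manifold $M^{n+1} := (S^1 \times S^n)/C_\ell$, keep track of the covering data, and then classify such quotients up to homeomorphism. Because $\ell$ is odd, $C_\ell$ acts trivially on $\pi_1(S^1 \times S^n) = \Z$ (as $\Aut(\Z) = C_2$ has order coprime to $\ell$), so $\pi_1(M)$ sits in a central extension $0 \to \Z \to \pi_1(M) \to C_\ell \to 0$ with trivial action. Classifying such abelian extensions shows $\pi_1(M) \iso \Z \oplus C_d$ for a unique divisor $d \,|\, \ell$, and this $d$ indexes the disjoint union appearing in the theorem; the $d = 1$ component recovers the singleton $\{(T_\ell)\}$.

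For $n = 2k$ even, the $C_d$-summand of $\pi_1(M)$ must act freely on $S^{2k}$ inside the universal cover, which is impossible for $d > 1$ by the Lefschetz fixed-point theorem; hence $d = 1$, and the surgery exact sequence for $\pi_1 = \Z$ combined with $\Wh(\Z) = 0$ forces $M \iso S^1 \times S^{2k}$. For $n = 1$, any free $C_\ell$-action on $T^2$ is affine (via a Seifert-fibration or Nielsen-realization argument) and hence conjugate to $T_\ell$. In both cases $\cA_\ell^n = \{(T_\ell)\}$.

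For $n = 2k-1$ with $k > 1$ and each $d \,|\, \ell$ with $d > 1$, the universal cover $\R \times S^{2k-1}$ admits a free $\Z \oplus C_d$-action, so the $C_d$-quotient of the sphere factor is a lens space $L = L(d;\,q_1,\dots,q_k)$ and $M$ is homotopy equivalent to $S^1 \times L$. The classical homotopy classification of lens spaces then produces the partition $Q_d^k$ via the reduced rotation invariant $\prod q_i \pmod{d}$ modulo $\pm a^k$. Next, I would compute the relevant structure set of $S^1 \times L$ using Ranicki's algebraic surgery exact sequence together with the splitting theorem for $\pi_1 = \Z \times C_d$: the factor $\Z^{(d-1)/2}$ arises from the free part of $L_{2k}(\Z[C_d])$ acting on the structure set, while the factor $H_0(C_2;\Wh_0(C_d))$ arises from the $\Wh_0(C_d)$ summand of $\Wh(\Z \oplus C_d)$ (via Bass--Heller--Swan) quotiented by the duality involution $\iota$ coming from the flip of $h$-cobordisms.

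The main obstacle is controlling the induced action on the structure set by the group of equivariant self-equivalences of $S^1 \times L$. This group has order quadratic in $d$, combining self-maps of the $S^1$ factor with $\Aut(C_d)$-twists of the lens factor, so a naive quotient would destroy the uniform-in-$\ell$ fiber bound. The resolution is to apply Ranicki's 2009 composition formula for structure groups, which absorbs the bulk of this action into a change of basepoint within the structure set and leaves only a residual action whose orbits divide $8\gcd(k,\varphi(d)/2)$, yielding the claimed fiber bound. Finally, the countable infinitude of $\cA_\ell^{2k-1}$ for $k>1$ is immediate from the unbounded $\Z^{(d-1)/2}$ factor, since every odd $\ell > 1$ has an odd prime divisor $d \geq 3$.
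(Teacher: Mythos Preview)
Your outline closely parallels the paper's two-stage approach and correctly identifies most of the ingredients: the fundamental-group computation, the homotopy classification via lens spaces and $Q_d^k$, the structure-set computation yielding $\Z^{(d-1)/2} \times H_0(C_2;\Wh_0(C_d))$, and the need to quotient by $\hMod(S^1 \times L)$ with the composition formula controlling orbit sizes.

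However, there is a genuine gap in the step ``replace each free $C_\ell$-action with its quotient manifold.'' The map $(C) \mapsto [(S^1 \times S^n)/C]$ from conjugacy classes to homeomorphism types is not obviously injective: given a homeomorphism $h: M' \to M$ of quotients, you need it to lift to $S^1 \times S^n$, which requires $h_\#$ to carry the image of $\pi_1(S^1 \times S^n)$ in $\pi_1(M')$ to the corresponding image in $\pi_1(M)$. But $\pi_1(M) \iso \Z \times C_d$ contains $d$ distinct infinite cyclic subgroups of index $d$ (generated by $t s^r$ for $0 \le r < d$), none of which is characteristic, so an arbitrary $h$ need not match them up. The paper resolves this via an explicit ``Dehn twist'' homeomorphism $\eps$ of $S^1 \times L_{d,q}$ that cyclically permutes these subgroups, and then---this is the subtle point---shows that $\eps^2$ can be pulled back along any homotopy equivalence $f: M \to S^1 \times L_{d,q}$ to a genuine self-\emph{homeomorphism} of $M$. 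That pullback argument is itself an application of Ranicki's composition formula together with the structure-set calculations: one shows $[M, \ol f \circ \eps^2 \circ f] = 0$ in $\cS^s_\TOP(M)$ because it is $d^2$-torsion in a group with only $2$-torsion and free summands. So the composition formula is used \emph{twice}: once, as you note, to control the $\hMod$-action on structures in the computation of $\cM^{h/s}_\TOP$, but also earlier to establish the very bijection $\cA_\ell^{2k-1} - \{(T_\ell)\} \approx \coprod \cM^{h/s}_\TOP(S^1 \times L_{d,q})$ on which the rest of the argument depends. Your phrase ``keep track of the covering data'' does not cover this; without it, you have only a surjection from homeomorphism types to conjugacy classes, not the bijection the theorem asserts.
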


Different preimages have different cardinalities (\ref{rem:cardinality}).
For $n=3$, this theorem answers the existence part of \cite[Problem 6.14]{CONFERENCE}; indeterminacy in the uniqueness is at most 16.

\begin{cor}\label{cor:classification}
Let $p \neq 2$ be prime.
Then $\cA_p^{2k} = \{(T_p)\}$ for all $k>0$ and $\cA_p^1 = \{(T_p)\}$.
Otherwise for any given $k>1$, there is a finite-to-one surjection
\[\begin{tikzcd}
\displaystyle Q_p^k ~\times~ \Z^{(p-1)/2} ~\times~ H_0(C_2;\Cl_p)
\arrow[two heads]{r} & \cA_p^{2k-1} - \{(T_p)\}.
\end{tikzcd}\]
Each preimage has cardinality dividing $8 \gcd(k,(p-1)/2)$, which is bounded in $p$.
\qed
\end{cor}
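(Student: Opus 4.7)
My plan is to specialize Theorem~\ref{thm:classification} to $\ell = p$ an odd prime and then to identify $\Wh_0(C_p)$ with the ideal class group $\Cl_p = \Cl(\Z[\zeta_p])$.

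Since $p$ is prime, the only divisor $d$ with $1 < d \mid p$ is $d = p$ itself, so the coproduct in Theorem~\ref{thm:classification} collapses to a single summand and the disjointness-of-images clause becomes vacuous. The indexing data reduce to $Q_p^k \times \Z^{(p-1)/2} \times H_0(C_2;\Wh_0(C_p))$, and the statements for $\cA_p^{2k}$, for $\cA_p^1$, and the finite-to-one surjection for $k > 1$ transcribe verbatim. Using $\varphi(p) = p-1$, the cardinality bound $8\gcd(k,\varphi(d)/2)$ becomes $8\gcd(k,(p-1)/2)$, which is at most $8k$ and hence bounded in $p$.

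The one substantive input is Rim's identification $\Wh_0(C_p) \cong \Cl_p$. I would obtain this by applying Milnor's Mayer--Vietoris sequence to the conductor square
\[
\begin{tikzcd}
\Z C_p \arrow[r] \arrow[d] & \Z[\zeta_p] \arrow[d] \\
\Z \arrow[r] & \F_p,
\end{tikzcd}
\]
where the right-hand map sends $\zeta_p \mapsto 1$. Since $\Z[\zeta_p] \thra \F_p$ is surjective, this Cartesian square is Milnor-Cartesian, and a short diagram chase on its $K$-theory sequence yields a canonical isomorphism $K_0(\Z C_p)/K_0(\Z) \cong \wt K_0(\Z[\zeta_p]) = \Cl_p$. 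For the involutions, observe that $g \mapsto g^{-1}$ on $C_p$ induces $\zeta_p \mapsto \zeta_p^{-1}$ on $\Z[\zeta_p]$ while fixing $\Z$ and $\F_p$; so by functoriality of Mayer--Vietoris, $\iota$ corresponds to complex conjugation on $\Cl_p$, and therefore $H_0(C_2;\Wh_0(C_p)) \cong H_0(C_2;\Cl_p)$.

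I anticipate no serious obstacle: the geometric content sits entirely in Theorem~\ref{thm:classification}, and what remains is a formal specialization together with Rim's classical identification. The only item requiring any care is the compatibility of the two involutions, which is a routine functoriality check on the Mayer--Vietoris sequence.
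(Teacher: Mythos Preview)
Your proposal is correct and matches the paper's approach: the corollary is marked \qed{} there, since it is an immediate specialization of Theorem~\ref{thm:classification} to $\ell=p$ together with the classical Rim identification $\Wh_0(C_p)\cong\Cl_p$. You have simply spelled out the Rim isomorphism and the compatibility of involutions that the paper takes as understood.
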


Here, $\Cl_p$ is the ideal class group of $\Z[\zeta_p]$; the involution $\iota$ is induced by $(\zeta_p \longmapsto \zeta_p^{-1})$.
The three parts are understood using the quotient manifold $M$ of the free $C_p$-action, specifically invariants of the infinite cyclic cover $\ol{M}$, as follows.
The $Q_p^k$-part is the first Postnikov invariant of $\ol{M}$.
The $\Z^{(p-1)/2}$-part is a projective $\rho$-invariant of $\ol{M}$.
The $\Cl_p$-part is the Siebenmann end obstruction of $\ol{M}$.
The indeterminacy $8 \gcd(k,(p-1)/2)$ is due to ineffective action of the group (quadratic growth in $p$) of self-homotopy equivalences of $M$.

\begin{rem}\label{rem:class}
Consider the ideal class group $\Cl_p^+$ of the real subring $\Z[\zeta_p+\zeta_p^{-1}]$ of $\Z[\zeta_p]$.
Write $G$ for the Galois group of $\Q(\zeta_p)$ over $\Q$.
The induced $\Z[G]$-module map $\Cl_p^+ \longra \Cl_p$ is injective (\cite[Theorem~4.14]{Washington_book}).
The norm map $N := 1 + \iota : \Cl_p \longra \Cl_p^+$ is surjective (\cite[Proof~10.2]{Washington_book}).
Since the fixed field of the automorphism $\iota \in G$ is $\Q(\zeta_p+\zeta_p^{-1})$, $\iota$ induces the identity on $\Cl_p^+$.
Then $\iota$ induces negative the identity on $\Cl_p^- := \Cl_p/\Cl_p^+$, since
\[
\iota(I) ~~= N(I) - I ~\equiv~ -I \pmod{\Cl_p^+}.
\]
Therefore we obtain an exact sequence of $\Z[G/\iota]$-modules:
\[\begin{tikzcd}
{}_2(\Cl_p^-) \rar{\wt{1-\iota}} & \Cl_p^+ \rar & H_0(C_2;\Cl_p) \rar & \Cl_p^-/2 \rar & 0.
\end{tikzcd}\]
Here ${}_2 A := \{a \in A \mid 2a=0\}$ denotes the exponent-two subgroup of any abelian group $A$, and $\wt{1-\iota} := (1-\iota)\circ s$ is a well-defined homomorphism via a setwise section $s: \Cl_p^- \longra \Cl_p$.
\end{rem}

\begin{rem}\label{rem:class_comp}
The $\Cl_p^-/2$ are only known for $p<500$ \cite{Schoof_imaginary}.
Even worse, the $\Cl_p^+$ are only known for $p \leqslant 151$.
The $\Cl_p^+$ are \emph{conditionally known} for $157 \leqslant p \leqslant 241$ \cite{Miller}, which we denote by ${}^*$, under the Generalized Riemann Hypothesis for the zeta function of the Hilbert class field of $\Q(\zeta_p+\zeta_p^{-1})$ .
We list these new results of R~Schoof and J~C~Miller:

\begin{table}[htbp]\caption{$\Cl_p^+$: \cite[Theorem 1.1]{Miller} \hfill $\Cl_p^-/2$: \cite[Table 4.4]{Schoof_imaginary}\newline
$H_0(C_2;\Cl_p)$: \hfill This group vanishes${}^*$ for the $46$ primes $p \leqslant 241$ not listed.}\label{table}
\centering
\begin{tabular}{c|c|c|c}
$p$ & $\Cl_p^+$ & $\Cl_p^-/2$ & $H_0(C_2;\Cl_p)$\\
\hline
29 & $0$ & $(2,2,2)$ & $(2,2,2)$ \\
113 & $0$ & $(2,2,2)$ & $(2,2,2)$ \\
163 & $(2,2)^*$ & $(2,2)$ & $4 \leqslant \text{order} \leqslant 16^*$ \\
191 & $(11)^*$ & $0$ & $(11)^*$ \\
197 & $0^*$ & $(2,2,2)$ & $(2,2,2)^*$ \\
229 & $(3)^*$ & $0$ & $(3)^*$ \\
239 & $0^*$ & $(2,2,2)$ & $(2,2,2)^*$
\end{tabular}
\end{table}
\end{rem}

Theorem~\ref{thm:classification} follows from Theorems~\ref{thm:homotopy} and~\ref{thm:homeo} below.
Consider complex coordinates
\[
S^{2k-1} ~=~ \{ u \in \C^k ~:~ u \cdot \ol{u}  = 1 \}.
\]
For any $q$ coprime to any $d > 1$, there is a linear isometry of $S^{2k-1}$ giving a free $C_d$-action:
\[
\Phi_{d,q} : S^{2k-1} \longra S^{2k-1} ~;~ (u_1,u_2, \ldots,u_k) \longmapsto (\zeta_d^q u_1, \zeta_d u_2, \ldots, \zeta_d u_k).
\]
The quotient manifold $L_{d,q}^{2k-1} := S^{2k-1}/\Phi_{d,q}$ is called \emph{the lens space of type $(d;q,1,\ldots,1)$}.

\begin{rem}\label{rem:classical}
The products of $S^1$ with the classical lens spaces $\Lambda$ of type $(p;q_1,\ldots, q_k)$ and $\Lambda'$ of type $(p;q_1',\ldots, q_k')$ are distinguished in Corollary~\ref{cor:classification}, first by homotopy type in the first factor, and then by homeomorphism type in the other factors, as follows.
First, $\Lambda$ has the homotopy type of $L_{p,q}$, where $q := q_1 \cdots q_k$, and similarly for $\Lambda'$ with $q' := q_1' \cdots q_k'$.
Furthermore, these types are equal if and only if $[q]=[q']$ in the set $Q_p^k$ \cite[(29.4)]{Cohen}.
Now assume $[q]=[q']$, so there exists a homotopy equivalence $f: \Lambda' \longra \Lambda$.

Second, assume $0 = \rho[\Lambda',f] = \rho(\Lambda') - \rho(\Lambda)$, which is independent of the choice of $f$.
Indeed, $\rho$ is an invariant of the $h$-bordism class of $(\Lambda',f)$ \cite[7.5]{AtiyahSinger}.
Then $[\Lambda',f] = [S^1\times \Lambda', \id_{S^1} \times f]$ in $\cS^s_\TOP(S^1 \times \Lambda)$ maps to zero in $\cS^h_\TOP(S^1 \times \Lambda) \iso \Z^{(p-1)/2}$; see Corollary~\ref{cor:h}.
This kernel is identified with the kernel of $\wt{L}^h_{2k}(C_p) \longra \wt{L}^p_{2k}(C_p)$, which is further identified with the following cokernel $\cH(C_p)$ arising in the Ranicki--Rothenberg sequence \cite{Bak_evenL}:
\[
\cH(C_p) ~:=~ \Cok\left( \wh{H}_0(C_2;K_0(\Z C_p,\Q C_p)) \longra \wh{H}_0(C_2;\Cl_p) \right).
\]
So the structure $[\Lambda', f]$ lies in the subquotient $\cH(C_p)$ of the third factor, $H_0(C_2;\Cl_p)$.

Third, assume the given $2$-torsion element $[\Lambda', f]$ of $\cS^h_\TOP(\Lambda)$ vanishes in $H_0(C_2;\Cl_p)$.
Then $f: \Lambda' \longra \Lambda$ is $h$-bordant to the identity map.
In particular, $\Lambda'$ is $h$-cobordant to $\Lambda$.
Therefore, they are isometric \cite[12.12]{Milnor_hcob}; equivalently $\Lambda$ and $\Lambda'$ are homeomorphic.
\end{rem}

For any closed manifold $X$, consider the set $\cM^{h/s}_\TOP(X)$ of closed topological manifolds $M$ homotopy equivalent to $X$ up to homeomorphism.
The calculation of $\cA_\ell$ reduces to $\cM$.

\begin{thm}\label{thm:homotopy}
Let $\ell$ be square-free odd.
Then $\cA_\ell^{2k} = \{(T_\ell)\}$ for all $k>0$ and $\cA_\ell^1 = \{(T_\ell)\}$.
Otherwise, for all $k>1$, passage to orbit spaces induces a bijection
\[\begin{CD}
\cA_\ell^{2k-1} - \{(T_\ell)\} @>>{\approx}> 
\displaystyle \coprod_{1<d \,|\, \ell} \: \coprod_{[q] \in Q_d^k} \cM^{h/s}_\TOP(S^1 \times L_{d,q}^{2k-1}).
\end{CD}\]
\end{thm}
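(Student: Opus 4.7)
The plan is to associate to each free action its orbit manifold $M$, identify the homotopy type of $M$ with that of $S^1 \times L_{d,q}^{2k-1}$ via a fundamental-group and Postnikov analysis, and then invert by covering-space theory plus surgery.

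For the degenerate ranges, the key input is that $\ell$ is odd. When $n = 2k$ is even, the extension $1 \to \Z \to \pi_1 M \to C_\ell \to 1$ is central, so $\pi_1 M \iso \Z \oplus C_d$ for some $d \mid \ell$; the torsion subgroup $C_d$ must act freely on the universal cover $\R \times S^{2k}$, forcing a trivial action on $\R$ (no finite group acts freely there) and a free action on $S^{2k}$, ruled out by Brouwer's theorem for odd $d > 1$. Hence $d=1$ and $\pi_1 M = \Z$, so $M \simeq S^1 \times S^{2k}$; an equivariant $h$-cobordism and surgery argument then promotes this to a homeomorphism identifying the deck action with $T_\ell$. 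When $n = 1$, Nielsen realization presents any free action on $T^2$ as affine; the only odd finite subgroup of $\mathrm{GL}_2(\Z)$ is $C_3$, whose affine representatives all have fixed points, so the action must be a pure translation, and pure translations of order $\ell$ form a single orbit under $\mathrm{GL}_2(\Z) \ltimes T^2$, conjugate to $T_\ell$.

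For the principal case $n = 2k - 1$ with $k > 1$, the same analysis yields $\pi_1 M \iso \Z \oplus C_d$ for a unique $d \mid \ell$. Freeness of $C_d$ on $\R \times S^{2k-1}$ combined with triviality on the contractible $\R$-factor produces a free $C_d$-action on $S^{2k-1}$, so $M \simeq S^1 \times L^{2k-1}$ for a lens space $L = L_{d,q}^{2k-1}$ classified up to homotopy by $[q] \in Q_d^k$ via its first Postnikov invariant~\cite[(29.4)]{Cohen}. The case $d = 1$ corresponds to $(T_\ell)$, excluded from both sides; passing to the homeomorphism class of $M$ otherwise defines the required map into $\coprod_{1<d\mid\ell,\,[q] \in Q_d^k} \cM^{h/s}_\TOP(S^1 \times L_{d,q}^{2k-1})$.

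For bijectivity, square-freeness of $\ell$ yields $\gcd(\ell/d, d) = 1$. Given $M$ in the target with $d > 1$, the subgroup $H := \gens{(\ell/d, 0)} \leq \Z \oplus C_d = \pi_1 M$ has cyclic quotient $C_\ell$; the associated $\ell$-fold cover $\wh M$ is a closed manifold homotopy equivalent to $S^1 \times S^{2k-1}$ via the product of the degree-$\ell/d$ cover of $S^1$ and the degree-$d$ cover $S^{2k-1} \to L$. A covering transfer in the Sullivan--Ranicki surgery exact sequence sends $[M] \in \cS^h_\TOP(S^1 \times L_{d,q}^{2k-1})$ to the standard class in $\cS^h_\TOP(S^1 \times S^{2k-1})$, so $\wh M \iso S^1 \times S^{2k-1}$ and its deck $C_\ell$-action provides the preimage. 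Injectivity follows from a Smith-normal-form calculation that all index-$\ell$ infinite-cyclic subgroups of $\Z \oplus C_d$ with cyclic quotient form a single $\Aut(\Z \oplus C_d)$-orbit, together with the realization of these automorphisms by self-homeomorphisms of $M$ via twistings of the $S^1 \times L$-fibration. The main obstacle is the transfer computation confirming $\wh M$ is the standard $S^1 \times S^{2k-1}$, not merely homotopy equivalent, which requires control of the surgery obstructions amid potential $\wt L$ or $\UNil$ contributions in the relevant structure sets.
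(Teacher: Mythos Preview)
Your outline has the right overall shape, but you have misidentified where the real difficulty lies, and there is a genuine gap in the injectivity step.

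\textbf{The actual hard step is injectivity, not surjectivity.} Surjectivity is easy: once you know $\wh{M}$ is homotopy equivalent to $S^1 \times S^{2k-1}$, the fact that $\cM^{h/s}_\TOP(S^1 \times S^{2k-1})$ is a single point follows directly from the surgery exact sequence for $\pi_1=\Z$ (Wall groups of $\Z$, trivial normal invariants). No transfer computation is needed, and there are no $\UNil$ or exotic $\wt L$ contributions here. The paper simply pulls back the universal $d$-fold cover $S^1\times S^n\to X_{d,q}$ along a homotopy equivalence $f:M\to X_{d,q}$ and invokes this rigidity.

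\textbf{The gap.} For injectivity you need: given two free $C_\ell$-actions with homeomorphic orbit spaces $M\approx M'$, lift the homeomorphism to $S^1\times S^{2k-1}$ conjugating the actions. The obstruction is that the two covering subgroups $p_\#(\Pi),\,(h\circ p')_\#(\Pi)\leq\pi_1(M)\cong\Z\times C_d$ may be \emph{different} infinite cyclic subgroups of index $d$. You correctly note they lie in one $\Aut(\Z\times C_d)$-orbit, but then assert these automorphisms are realized by self-homeomorphisms of $M$ ``via twistings of the $S^1\times L$-fibration.'' That is exactly what must be proved: $M$ is only \emph{homotopy equivalent} to $S^1\times L$, so it carries no such fibration a priori, and the Dehn-twist homeomorphism $\eps$ lives on $X_{d,q}$, not on $M$. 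Showing that the pullback $\ol f\circ\eps^2\circ f$ is homotopic to a \emph{homeomorphism} of $M$ is the content of the paper's Proposition~\ref{prop:transitive}, and it requires the full surgery-theoretic analysis of Section~\ref{sec:eps}: that $\eps$ acts trivially on $\Wh_1$, on $\cS^h_\TOP$, and (for $\eps^2$) on $\cS^s_\TOP$, together with Ranicki's composition formula to kill the odd-order obstruction in a structure group that is a sum of $\Z$'s and $\Z/2$'s.

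\textbf{A smaller issue.} In the even-$n$ case, your claim that the free $C_d$-action on $\R\times S^{2k}$ ``splits'' into a trivial action on $\R$ and a free action on $S^{2k}$ is unjustified: the deck action need not be a product. The correct argument (as in the paper) is cohomological: the quotient $\ol M$ has universal cover $\simeq S^{2k}$ and $\pi_1=C_d$, forcing $K(C_d,1)$ to have $(2k{+}1)$-periodic cohomology, which contradicts its $2$-periodicity unless $d=1$.
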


We calculate these $\cM$ by methods of surgery theory and express them with $K$-theory.

\begin{thm}\label{thm:homeo}
Let $d$ be square-free odd, $q$ coprime to $d$, and $k>1$.
There is a surjection
\[\begin{tikzcd}
\Z^{(d-1)/2} ~\times~ H_0(C_2;\Wh_0(C_d))
\arrow[two heads]{r} & \cM^{h/s}_\TOP(S^1 \times L_{d,q}^{2k-1}).
\end{tikzcd}\]
Any preimage has cardinality dividing $8 \gcd(k,\varphi(d)/2)$, which has bounded growth in $d$.
\end{thm}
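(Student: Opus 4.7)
The plan is to first compute the topological structure set $\cS^h_\TOP(X)$ for $X := S^1 \times L_{d,q}^{2k-1}$ via the surgery exact sequence and a Ranicki--Shaneson splitting along the $S^1$-factor, then realize $\cM^{h/s}_\TOP(X)$ as the quotient by the natural action of the finite group $E := \pi_0\,\mathrm{hAut}(X)$ of homotopy self-equivalences, with control on orbit sizes coming from Ranicki's composition formula for structure groups.

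\textbf{Step 1 (splitting off the $S^1$-factor).} Because $\pi_1 X = \Z \times C_d$ with $d$ square-free odd, the group ring $\Z C_d$ is $\Q$-regular and the relevant Cappell $\UNil$-terms vanish. Applying the Shaneson--Ranicki product formula for the $L$-theory of $\Z[\Z \times C_d]$ together with the Rothenberg sequences for change of decorations, the surgery exact sequence in dimension $2k$ expresses $\cS^h_\TOP(X)$ as an extension built out of $\cS^h_\TOP(L)$ and $\cS^p_\TOP(L)$, where $L := L_{d,q}^{2k-1}$.

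\textbf{Step 2 (structure sets of odd lens spaces).} For the lens space $L$, Wall's classical computation of the Sullivan--Wall surgery exact sequence identifies a free summand of $\cS^p_\TOP(L)\otimes\Q$ of rank $(d-1)/2$ detected by the (projective) Atiyah--Singer $\rho$-invariant; this is what contributes the $\Z^{(d-1)/2}$-factor to $\cS^h_\TOP(X)$ through Step~1. The $h/p$ torsion discrepancy appears in the Ranicki--Rothenberg sequence for $\Z C_d$, and in this odd-dimensional range isolates precisely the coinvariants $H_0(C_2;\Wh_0(C_d))$ after noting the vanishing of the neighboring even-dimensional terms for odd $d$. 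Combining Steps~1--2 yields a surjection
\[
\Z^{(d-1)/2} \,\times\, H_0(C_2;\Wh_0(C_d)) \,\twoheadrightarrow\, \cS^h_\TOP(X),
\]
whose non-injectivity (if any) will be absorbed into the $E$-action of Step~3.

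\textbf{Step 3 (action of $E$).} The group $E$ is a finite extension whose order grows quadratically in $d$, governed at the $\pi_1$-level by $\Aut(\Z \times C_d)$ and by the $S^1$-reflection. Its action on $\cS^h_\TOP(X)$ is transported through the Step~1 splitting via Ranicki's composition formula. On $\Z^{(d-1)/2}$, $E$ acts by permuting and signing the character-indexed $\rho$-coordinates via $\chi \mapsto \chi^a$ with $a \in \Z_d^\times$, twisted by $a^k$ (the $k$-th power coming from the linear dimension $k$ of the lens-space factor); the kernel of this twisted representation has order dividing $\gcd(k,\varphi(d)/2)$. On $H_0(C_2;\Wh_0(C_d))$, the action factors through the three $\Z/2$-involutions coming from orientation reversal on $S^1$, on $L$, and via $\iota \in \Aut(C_d)$, contributing at most the factor $2^3 = 8$ to the stabilizer. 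Orbit-stabilizer then bounds each preimage in the displayed surjection by $8\gcd(k,\varphi(d)/2)$, which grows only with $k$ and $\varphi(d)/2 < d$.

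\textbf{The main obstacle} is tracking the $E$-action simultaneously on the free part and on the torsion part of $\cS^h_\TOP(X)$. The free part requires computing the effect of $E$ on the character basis for $\rho$, which needs Ranicki's composition formula unwound to identify the $a^k$-twist responsible for the $\gcd(k,\varphi(d)/2)$ factor; the torsion part requires a parallel $K$-theoretic analysis inside $\Wh_0(\Z C_d)$ to extract the $\{\pm 1\}^3$-isotropy bound. All other steps are essentially bookkeeping within the surgery exact sequence and the Ranicki--Rothenberg framework.
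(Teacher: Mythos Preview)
There is a genuine gap in Step~2 and a reversed inequality in Step~3.

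\textbf{On Step~2.} Your target for the displayed surjection is $\cS^h_\TOP(X)$, but the paper shows (Corollary~\ref{cor:h}) that $\cS^h_\TOP(X)$ is \emph{free abelian} of rank $(d-1)/2$; the factor $H_0(C_2;\Wh_0(C_d))$ is not present there. The place it actually lives is in the passage from $\cS^h_\TOP(X)$ back up to the hybrid set $\cS^{h/s}_\TOP(X)$: by Proposition~\ref{prop:hs}, $\cS^{h/s}_\TOP(X)$ fibers over $\cS^h_\TOP(X)$ with fiber $\Wh_1(\pi_1 X)/SI(M)$, and Corollary~\ref{cor:SI} identifies that quotient with $H_0(C_2;\Wh_0(C_d))$ after a careful analysis of strongly inertial $h$-cobordisms (Proposition~\ref{prop:SI}, Theorem~\ref{thm:SI}). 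Your Rothenberg-style extraction of $H_0$ as an ``$h/p$ torsion discrepancy'' inside $\cS^h_\TOP(X)$ does not work, and without the $SI(M)$ computation you cannot produce a map to $\cM^{h/s}_\TOP(X)$ at all, since $\cM^{h/s}_\TOP(X) = \hMod(X)\backslash \cS^{h/s}_\TOP(X)$, not $\hMod(X)\backslash \cS^h_\TOP(X)$.

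\textbf{On Step~3.} You correctly note that $E = \hMod(X)$ has order quadratic in $d$, but then say ``the kernel of this twisted representation has order dividing $\gcd(k,\varphi(d)/2)$.'' That is backwards: a small kernel means a large effective action and hence large orbits, not small ones. What the paper actually proves is that the order-$d^2$ subgroup of $E$ (generated by the ``Dehn twists'' $[\rho^2]$ and $[\eps^2]$) acts \emph{trivially} on $\cS^{h/s}_\TOP(X)$. This is the hard step: one must show that for every $[M,f]$ the pullback $f^*\eps^2$ (resp.\ $f^*\rho^2$) is homotopic to a homeomorphism of $M$, and this is exactly where Ranicki's composition formula enters (Proposition~\ref{prop:transitive}, via Lemma~\ref{lem:eps} and Lemma~\ref{lem:eps_power}). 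Only after killing that $d^2$ does one get an action of the residual group $C_2 \times C_2 \times B$ of order $4e = 8\gcd(k,\varphi(d)/2)$, which then bounds every preimage. Your orbit-stabilizer argument, as written, does not establish triviality of the $d^2$-piece and so cannot deliver the bound.
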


Theorem~\ref{thm:homotopy} and Theorem~\ref{thm:homeo} are proven in Section~\ref{sec:homotopy} and Section~\ref{sec:actions}, respectively.
The difficulty in generalizing Theorem~\ref{thm:classification} to all odd $\ell$ comes from the proof of Theorem~\ref{thm:homotopy}.
When $d>1$ is \emph{not square-free,} say $d=p^2$, the groups $NK_1(\Z[C_{p^2}])$ are huge: they are closely related to infinitely generated modules over the Verschiebung algebra of $\F_p[t]$.
Nonetheless, there would be two difficulties in handling elements of $NK_1$ in this paper: topologically there would be a `relaxation' obstruction to making Proposition~\ref{prop:transitive} work, and algebraically there would be a `homothety' obstruction to making Lemma~\ref{lem:eps}(1) work.

\section{Classification of homotopy types}\label{sec:homotopy}

The first stage is the homotopy classification of orbit spaces, then analysis of conjugacy.

\begin{prop}\label{prop:crosscircle}
Let $S^1 \times S^n$ be an $\ell$-fold regular cyclic cover of a topological space $M$, with $n \geqslant 1$ and odd $\ell>1$.
Then $M$ is homotopy equivalent to $S^1 \times S^n$ or $S^1 \times L_{d,q}^n$ with $d | \ell$.
\end{prop}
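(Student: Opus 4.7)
The strategy is to compute $\pi_1(M)$, identify the $\Z$-cover of $M$ as a homotopy lens space, and then show the resulting fibration over $S^1$ is trivial up to homotopy.

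First I would compute $\pi_1(M)$. For $n \geq 2$, the cover $S^1 \times S^n \to M$ yields a short exact sequence
\[
1 \longra \Z \longra \pi_1(M) \longra C_\ell \longra 1,
\]
and the conjugation action of $C_\ell$ on $\pi_1(S^1) = \Z$ is trivial because $\Aut(\Z) = C_2$ admits no odd-order element $>1$.  Hence the extension is central, classified by $H^2(C_\ell;\Z) = \Z/\ell$; a presentation check shows every such extension is isomorphic to $\Z \oplus C_d$ for a unique $d \mid \ell$.  Let $N$ denote the cover of $M$ corresponding to the torsion subgroup $C_d \subset \Z \oplus C_d$; then $\pi_1(N) = C_d$ and its universal cover $\wt{N} = \wt{M} \simeq \R \times S^n \simeq S^n$.

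Next I would dispatch the degenerate cases.  If $n=1$, then $M$ is a closed surface of Euler characteristic zero; odd-order self-homeomorphisms preserve orientation, so $M$ is orientable and hence $M \simeq T^2 = S^1 \times L_{1,1}^1$.  If $n \geq 2$ is even, Smith fixed-point theory rules out a nontrivial free $C_p$-action ($p$ an odd prime divisor of $d$) on a space of the homotopy type of $S^n$, forcing $d=1$; then $\pi_1(M) = \Z$, and the classifying map $M \to S^1$ has homotopy fiber $\simeq S^n$ with degree-$(+1)$ monodromy (by the same odd-order trick applied to $\pi_n(S^n)$), giving $M \simeq S^1 \times S^n$.

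The substantial case is $n \geq 3$ odd.  The first Postnikov $k$-invariant of $N$ lies in $H^{n+1}(C_d; \Z) = \Z/d$; since $N$ is homotopy equivalent to a closed codimension-one submanifold of $M$ (realized as a transverse preimage in a smooth approximation of $M \to S^1$), finiteness forces this $k$-invariant to be a unit $q$, giving $N \simeq L_{d,q}^n$.  To conclude $M \simeq S^1 \times N$, I would verify that the deck transformation $\sigma: N \to N$ of the $\Z$-cover is homotopic to $\id_N$: abelianness of $\pi_1(M)$ gives $\sigma_* = \id$ on $\pi_1(N)$, and tracing the inclusion $\Z = \pi_1(S^1 \times S^n) \hookrightarrow \Z \oplus C_d$ shows the $\Z$-component of its generator equals $\ell/d$, which is odd.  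This forces the generator of the $\Z$-summand of $\pi_1(M)$ to act trivially on $\pi_n(\wt{M}) = \Z$; hence $\sigma$ induces the identity on $\pi_1(N)$ and on $\pi_n(N)$.  The classical computation of the self-homotopy-equivalence group of $L_{d,q}^n$ (odd $n$) then yields $\sigma \simeq \id$, so the fibration $N \to M \to S^1$ is homotopically trivial and $M \simeq S^1 \times L_{d,q}^n$.

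The hardest step will be the last: producing the lens-space structure on $N$ and verifying that its monodromy is null-homotopic, both of which rest crucially on the odd-parity hypothesis for $\ell$ (and hence for $d$ and $\ell/d$) together with the known classification of self-homotopy equivalences of classical lens spaces.
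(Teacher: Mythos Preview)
Your overall strategy---compute $\pi_1(M)$, identify the infinite cyclic cover $N$ as a homotopy lens space, then show the monodromy is homotopically trivial---is exactly the paper's, and your handling of $\pi_1$ (via centrality and $H^2(C_\ell;\Z)$), of $n=1$, of $n$ even (via Smith theory), and of the monodromy (via the $\ell/d$ odd trick on $\pi_n$) is correct and in places a bit cleaner than the paper's explicit generator chase.

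There is, however, a genuine gap in the step where you identify $N$ with a lens space. You assert that $N$ is homotopy equivalent to a closed codimension-one submanifold $N_0 \subset M$ obtained as a transverse preimage of a point under $M \to S^1$. Transversality (topological transversality would be needed, incidentally, since $M$ is only a topological manifold) does produce such an $N_0$, but the inclusion $N_0 \hookrightarrow N$ into the infinite cyclic cover is a homotopy equivalence only if cutting $M$ along $N_0$ yields an $h$-cobordism---which is essentially what you are trying to prove, not something you may assume. So your appeal to ``finiteness'' of $N_0$ to force the $k$-invariant to be a unit is circular as written.

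The paper avoids this by observing directly that $N=\ol{M}$ is a $PD_n$-complex: its universal cover $\R\times S^n\simeq S^n$ is one, and the deck group $C_d$ is finite. Then Wall's classification (\cite[Theorem~4.3]{Wall_PD}) of $PD_n$-complexes with spherical universal cover gives both that the $k$-invariant is a generator of $H^{n+1}(C_d;\Z)$ and that this invariant determines the oriented homotopy type, whence $N\simeq L_{d,q}^n$. An equivalent repair, closer in spirit to your ``finiteness'' remark, is: $N$ is finitely dominated (finite $\pi_1$, finite universal cover up to homotopy), hence $H^*(N)$ is bounded; running the Serre spectral sequence for $S^n\to N\to K(C_d,1)$ then forces the transgression---which is the $k$-invariant---to be a unit in $\Z/d$, else cohomology of $K(C_d,1)$ survives in arbitrarily high degrees. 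Either route closes the gap; the transversality claim should be dropped.
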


The degree $\ell$ must be odd, or else the Klein bottle $M=\mathbb{RP}^2\#\mathbb{RP}^2$ is a counterexample.

\begin{proof}
The regular covering map $S^1 \times S^n \longra M$ has degree $\ell > 1$.
Since $\ell$ is odd, the quotient manifold $M$ is oriented.
If $n=1$ then $M$ must be homeomorphic to the torus $S^1 \times S^1$.
If $n=2$ then $M$ must be homotopy equivalent to $S^1 \times S^2$. 
So now assume $n \geqslant 3$.

The covering map $S^1 \times S^n \longra M$ has covering group $C_\ell$.
Write $\Gamma := \pi_1(M)$ for the fundamental group of the quotient space.
The exact sequence of homotopy groups contains
\[\begin{tikzcd}
1 \rar & C_\infty \rar{\iota} & \Gamma \rar{\varphi} & C_\ell \rar & 1.
\end{tikzcd}\]
Write $T \in \Gamma$ for the image under $\iota$ of a generator of $C_\infty$.
Select an element $S \in \Gamma$ such that $S$ maps under $\varphi$ to a generator $s$ of $C_\ell$.
Define a setwise section
\[
\sigma: C_\ell \longra \Gamma ~;~ s^b \longmapsto S^b \text{ for all } 0 \leqslant b < \ell.
\]
In general for a group extension equipped with a setwise section, one has $\Gamma = (\Img\,\iota)(\Img\,\sigma)$.
Then, for each $x \in \Gamma$, we obtain the normal form $x = T^a S^b$ for some $a \in \Z$ and $0 \leqslant b < \ell$.
Note $S^{-1} T S \in \{T,T^{-1}\}$.
If $S^{-1} T S = T^{-1}$ then $S^{-\ell} T S^\ell = T^{(-1)^\ell}$, but $S^\ell \in \Ker\,\varphi = \Img\,\iota$ and $\ell$ is odd, so $T = T^{-1}$, a contradiction.
Hence $TS = ST$.
Therefore $\Gamma$ is abelian.
Hence $\pi_1(M) = \Gamma \iso C_\infty \times C_d$ for some divisor $d$ of $\ell$
(this includes the case of $d=1$).

There is a corresponding infinite cyclic cover $\ol{M}$ with covering translation $t: \ol{M} \longra \ol{M}$.
There is a bundle sequence $\R \longra \Torus(t) \longra M$, with total space the mapping torus of $t$.

Observe that $\ol{M}$ is a $PD_n$-complex, since the $PD_n$-complex $\R \times S^n$ is its universal cover with finite covering group $\pi_1(\ol{M}) = C_d$.
For any $PD_n$-complex $X$ with $n \geqslant 3$ and $\wt{X} \simeq S^n$, Wall showed that the first Postnikov invariant $k_1(X): K(\pi_1 X,1) \to K(\Z,n+1)$ is a generator of the abelian group $H^{n+1}(\pi_1 X;\Z)$ and that the oriented homotopy type of $X$ is uniquely determined by the orbit $[k_1(X)]$ under action of the group $\Out(\pi_1 X)$ \cite[Theorem~4.3]{Wall_PD}.

If $d=1$ then $\ol{M}$ is homotopy equivalent to $S^n$.
Otherwise assume $d>1$.
Recall the cohomology ring $H^*(C_d;\Z)=\Z[\iota]/(d \, \iota)$, where $\iota$ has degree 2; in particular, $K(C_d,1)$ has 2-periodic cohomology.
However $C_d$ acts freely on $\R \times S^n \simeq S^n$, so a standard argument with the Leray--Serre spectral sequence shows that $K(C_d,1)$ has $(n+1)$-periodic cohomology.
Hence $n=2k-1$ for some $k>1$.
Write $q \, \iota^k \in H^{2k}(C_d;\Z) = \Z/d$ for the first Postnikov invariant of $\ol{M}$; we have $\gcd(d,q)=1$.
The lens space $L(d;q,1, \ldots, 1)$ also has first Postnikov invariant $q$, so $\ol{M}$ must be homotopy equivalent to $L_{d,q}^{2k-1} = L(d;q,1, \ldots, 1)$.

In any of these cases of $d$ and $q$, there exist a closed $n$-manifold $L$ and a homotopy equivalence $h: L \longra \ol{M}$.
Select a homotopy inverse $\ol{h}: \ol{M} \longra L$ for $h$; consider the oriented homotopy equivalence $\alpha := \ol{h} \circ t \circ h: L \longra L$.
By cyclic permutation of factors,
\[
\Torus(\alpha) ~\simeq~ \Torus(h \circ \ol{h} \circ t) ~\simeq~ \Torus(t) ~\simeq~ M.
\]
Then on fundamental groups we have $C_d \rtimes_{\alpha_\#} C_\infty \iso C_d \times C_\infty$, where $\alpha_\# \in \Out(C_d)$ is the induced automorphism on $\pi_1(L)$.
Hence $\alpha_\#=\id$.
Therefore $\alpha \simeq \id$ \cite[(29.5A)]{Cohen}.
\end{proof}

The linking form on the $(k-1)$-st homology group of the infinite cyclic cover $\ol{M}$ is the $1 \times 1$ matrix $[q/p] \in \Q/\Z$ \cite[\S77: p290]{ST}, which recovers the Postnikov invariant $q \, \iota^k$.

In the sequel, we shall fix $k>1$ and consider the latter, closed $2k$-dimensional manifold
\[
X_{d,q} ~:=~ S^1 \times L_{d,q}^{2k-1}.
\]
The following definition generalizes the homeomorphism of Jahren--Kwasik~\cite[\S4]{JK}.
Write $t$ and $s$ for the usual generators of $C_\infty$ and $C_d$, respectively.
Note $(t^k,s^j) \longmapsto (t^k,s^{k+j})$ in $\Aut(C_\infty \times C_d)$ is induced by the well-defined self-homeomorphism (like a Dehn twist):
\begin{equation}\label{eqn:epsh}
\eps : X_{d,q} \longra X_{d,q} ~;~ (z,[u_1 : u_2 : \ldots : u_k]) \longmapsto (z,[z^{q/d} u_1 : z^{1/d} u_2 : \ldots : z^{1/d} u_k]).
\end{equation}
This is multiplication by the path $[0,2\pi] \longra GL_k(\C) ~;~ \theta \longmapsto \mathrm{diag}(e^{\theta i q/d}, e^{\theta i/d}, \ldots, e^{\theta i/d})$.

\begin{prop}\label{prop:transitive}
Let $f: M \longra X_{d,q}$ be a homotopy equivalence with $M$ a closed manifold.
There exists $\delta \in \Homeo(M)$ satisfying a homotopy commutative diagram
\[\begin{tikzcd}
M \rar{f} \dar[swap]{\delta} & X_{d,q} \dar{\eps^2}\\
M \rar{f} & X_{d,q}.
\end{tikzcd}\]
\end{prop}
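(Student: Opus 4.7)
My plan is to construct $\delta$ explicitly as a Dehn twist of $M$ mirroring the Dehn-twist description of $\eps^2$ on $X_{d,q}$. First, I would reinterpret $\eps$ geometrically. The circle $S^1$ acts on $L_{d,q}^{2k-1}$ with weights $(q,1,\ldots,1)$ via $w \cdot [u_1,\ldots,u_k] := [w^q u_1, w u_2, \ldots, w u_k]$, and since this has constant isotropy $C_d \subset S^1$ (the original deck group), it factors through a free action of $S^1/C_d$. In these terms, $\eps^2(z,x) = (z, z^{2/d} \cdot x)$, well-defined because the $d$-th roots of $z^2$ differ by elements of $C_d$ (using $\gcd(2,d)=1$, since $d$ is odd). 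Hence $\eps^2$ is the Dehn twist of $X_{d,q}$ along the fiber $\{1\} \times L_{d,q}$ of the projection $X_{d,q} \to S^1$, governed by the loop $\gamma \in \pi_1(\Homeo(L_{d,q}), \id)$ whose point-evaluation represents $s^2 \in \pi_1(L_{d,q}) = C_d$.

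Second, to realize the analogous twist on $M$, I would pick a codimension-one submanifold $N \subset M$ transverse to a map $p: M \to S^1$ classifying $\pi_1(M) \thra C_\infty$. The closed $(2k-1)$-manifold $N$ has $\pi_1(N) = C_d$ and, by the proof of Proposition~\ref{prop:crosscircle}, is homotopy equivalent to $L_{d,q}$. Up to an $h$-cobordism of $M$ (whose obstruction sits in Whitehead-theoretic data that behaves nicely when $d$ is square-free odd), one may arrange $M$ as a mapping torus of some $\phi: N \to N$ with $\phi \simeq \id_N$, and lift the loop $\gamma$ through the homotopy equivalence to a loop $\gamma_N \in \pi_1(\Homeo(N), \id_N)$. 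Then define $\delta: M \to M$ by inserting $\gamma_N$ into a bicollar of $N$: on $N \times [-1,1]$ set $\delta(n,t) := (\gamma_N(\rho(t))(n), t)$ for a suitable cutoff $\rho$, and extend by the identity outside. By construction, $\delta$ is a self-homeomorphism whose induced $\pi_1$-automorphism $(t^a, s^b) \mapsto (t^a, s^{2a+b})$ matches that of $\eps^2$.

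To verify $f \circ \delta \simeq \eps^2 \circ f$, I would lift to universal covers $\R \times S^{2k-1}$, where both maps arise from the same $\R$-equivariant $S^1$-family of unitary transformations of $S^{2k-1}$. Combined with the $\pi_1$-agreement above, a Postnikov comparison via \cite[Theorem~4.3]{Wall_PD} (already invoked in Proposition~\ref{prop:crosscircle}) produces the required homotopy.

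The main obstacle is the middle step: transporting the loop $\gamma$ from $\Homeo(L_{d,q})$ to $\Homeo(N)$ when $N$ is only homotopy equivalent to $L_{d,q}$. This is precisely where the square-free hypothesis on $d$ is essential: as noted at the end of Section~1, in the non-square-free case the relaxation obstruction in $NK_1(\Z C_d)$ is nontrivial and this Dehn-twist construction breaks down.
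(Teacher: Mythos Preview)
Your approach has a genuine gap, and it is exactly the step you flag as ``the main obstacle'': there is no mechanism for transporting a loop in $\pi_1(\Homeo(L_{d,q}),\id)$ to a loop in $\pi_1(\Homeo(N),\id)$ along a mere homotopy equivalence $N \simeq L_{d,q}$. A homotopy equivalence induces a map on $\pi_1(\Map(-),\id)$, not on $\pi_1(\Homeo(-),\id)$; asking whether the resulting self-equivalence of $N$ is isotopic (or even homotopic) to a homeomorphism is itself a structure-set question of the same flavor as the one you are trying to solve. The square-free hypothesis does \emph{not} resolve this: vanishing of $NK_1(\Z C_d)$ controls the Bass--Heller--Swan splitting of $\Wh_1$ and the relaxation obstruction for fibering, but it says nothing about lifting loops of homeomorphisms through a homotopy equivalence of fake lens spaces. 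There is a second problem earlier: you allow yourself to replace $M$ ``up to $h$-cobordism'' so that it becomes a mapping torus, but the proposition asks for $\delta \in \Homeo(M)$ on the \emph{given} $M$. The Farrell fibering obstruction for $M \to S^1$ has a component in $\Wh_0(C_d)$ (the Siebenmann end obstruction of the infinite cyclic cover), which is generally nonzero even when $d$ is square-free; you cannot simply wish it away.

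The paper's argument is entirely different and avoids any geometric construction on $M$. One writes down the obvious candidate $\alpha := \ol{f} \circ \eps^2 \circ f$ as a self-homotopy-equivalence of $M$ and then proves that $[M,\alpha] = 0$ in $\cS^s_\TOP(M)$, so that the $s$-cobordism theorem produces $\delta$. The key inputs are algebraic: (i) $\eps$ acts trivially on $\Wh_1(\pi_1 X_{d,q})$ (Lemma~\ref{lem:eps}(1), which genuinely uses square-freeness via the Bass--Heller--Swan decomposition), so $\tau(\alpha)=0$; (ii) $\eps^2$ acts trivially on $\cS^s_\TOP(X_{d,q})$ (Lemma~\ref{lem:eps}(3)); (iii) $\eps^{2d^2} \simeq \id$ (Lemma~\ref{lem:eps_power}); and (iv) Ranicki's composition formula gives $d^2[M,\alpha]=0$, which forces $[M,\alpha]=0$ since $\cS^s_\TOP(M)$ has only $2$-torsion and free summands (Corollary~\ref{cor:h} and \eqref{eqn:forget}) while $d$ is odd. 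In short, the paper treats the problem as a computation in structure groups rather than a construction of a Dehn twist on $M$.
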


Later, in Section~\ref{sec:eps}, we prove Proposition~\ref{prop:transitive} based on surgery-theoretic calculations.

Notice that $\pi_1(X_{d,q}) = C_\infty \times C_d$ does \emph{not} have a \emph{unique} infinite cyclic subgroup $Z$ of index $d$, rather there are exactly $d$ such subgroups (generated by $ts^r$ with $0 \leqslant r < d$).
Although each $Z$ is normal, none is characteristic: $\Aut(C_\infty \times C_d)$ acts transitively on them.

\begin{cor}\label{cor:transitive}
Let $M$ be a closed manifold in the homotopy type of $X_{d,q}$.
Let $Z$ and $Z'$ be infinite cyclic subgroups of index $d$ in $\pi_1(M)$.
Then $\delta'_\#(Z) = Z'$ for some $\delta' \in \Homeo(M)$.
\end{cor}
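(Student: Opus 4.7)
The plan is to use Proposition~\ref{prop:transitive} iteratively, exploiting that a single application of it permutes the $d$ candidate subgroups with step size~$2$, which exhausts $\Z/d$ since $d$ is odd.

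First, I would reduce to an algebraic bookkeeping problem in $\pi_1(X_{d,q}) = C_\infty \times C_d$. Fix a homotopy equivalence $f: M \longra X_{d,q}$; then $f_\#$ identifies $\pi_1(M)$ with $C_\infty \times C_d$. Any infinite cyclic subgroup $N$ of index $d$ in $C_\infty \times C_d$ is the kernel of a surjection onto $C_d$; since such surjections send $t$ to some $s^{-c}$ and $s$ to a generator $s^{b}$ of $C_d$, the kernel is generated by a unique element $ts^r$ with $0 \leqslant r < d$. So $f_\#(Z) = \langle ts^r \rangle$ and $f_\#(Z') = \langle ts^{r'} \rangle$ for uniquely determined $r, r' \in \Z/d$.

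Second, I would compute the action on these subgroups of the homeomorphism $\delta \in \Homeo(M)$ produced by Proposition~\ref{prop:transitive}. From the formula~(\ref{eqn:epsh}) one reads off $\eps_\#(t) = ts$ and $\eps_\#(s) = s$, so $(\eps^{2m})_\#$ sends $ts^r$ to $ts^{r+2m}$ for every $m \geqslant 0$. Iterating the homotopy $f \circ \delta \simeq \eps^2 \circ f$ gives $f \circ \delta^m \simeq \eps^{2m} \circ f$, and since $C_\infty \times C_d$ is abelian the basepoint ambiguity is immaterial; hence
\[
f_\#\bigl((\delta^m)_\#(Z)\bigr) ~=~ (\eps^{2m})_\# \bigl( \langle ts^r \rangle \bigr) ~=~ \langle t s^{r+2m} \rangle.
\]
Because $\langle ts^a \rangle = \langle ts^b \rangle$ exactly when $a \equiv b \pmod d$, matching $Z'$ reduces to solving $2m \equiv r' - r \pmod d$.

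Third, the corollary follows by choosing $m$. Since $d$ divides the squarefree odd integer $\ell$, the element $2$ is a unit in $\Z/d$, and $m := (r'-r)/2 \pmod d$ solves the congruence. Then $\delta' := \delta^m \in \Homeo(M)$ satisfies $\delta'_\#(Z) = Z'$, as required.

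The main obstacle here is not the corollary itself, which is essentially combinatorial, but the underlying Proposition~\ref{prop:transitive}; once that geometric input is granted, the argument reduces to the observation that $2$ generates $\Z/d$ whenever $d$ is odd.
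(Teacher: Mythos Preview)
Your proposal is correct and follows essentially the same route as the paper's proof: identify the index-$d$ infinite cyclic subgroups as $\langle ts^r\rangle$, invoke Proposition~\ref{prop:transitive} to obtain $\delta$ with $f\circ\delta\simeq\eps^2\circ f$, and take a suitable power of $\delta$ to shift $r$ to $r'$. The only cosmetic difference is that the paper writes down the explicit integer exponent $(b-a)(1-d)/2$ (which solves your congruence $2m\equiv r'-r\pmod d$ directly), whereas you appeal to the invertibility of $2$ in $\Z/d$ and pick a representative.
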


\begin{proof}
Select a homotopy equivalence $f: M \longra X_{d,q}$.
There are integers $a$ and $b$ such that $f_\#(Z)$ and $f_\#(Z')$ are generated respectively by $ts^a$ and $ts^b$ in $\pi_1(X_{d,q})$.
By Proposition~\ref{prop:transitive}, there is $\delta \in \Homeo(M)$ with $f \circ \delta \simeq \eps^2 \circ f$.
Define $\delta' := \delta^{(b-a)(1-d)/2} \in \Homeo(M)$.
Note
\[
\delta'_\#(f^{-1}_\#(ts^a)) ~=~ f^{-1}_\#(\eps_\#^{(b-a)(1-d)}(ts^a)) ~=~ f^{-1}_\#(ts^{(b-a)(1-d)}s^a) ~=~ f^{-1}_\#(ts^b).
\qedhere
\]
\end{proof}

\begin{proof}[Proof of Theorem~\ref{thm:homotopy}]
Conjugate subgroups of $\Homeo(S^1 \times S^n)$ give homeomorphic orbit spaces.
Then, by Proposition~\ref{prop:crosscircle}, we can define a function $\Phi$ given by homeomorphism classes of homotopy types of orbit spaces:
\[\begin{CD}
\Phi ~:~ \cA_\ell^n @>>> \displaystyle \cM^{h/s}_\TOP(S^1 \times S^n) ~\sqcup~
\begin{cases}
\varnothing & \text{if } n=1 \text{ or } n=2k\\
\coprod_{1<d|\ell} \coprod_{[q] \in Q_d^k} \cM^{h/s}_\TOP(X_{d,q}) & \text{if } n=2k-1 \geqslant 3.
\end{cases}
\end{CD}\]

Note $\Phi\{(T_\ell)\} = \{[S^1\times S^n]\} = \cM^{h/s}_\TOP(S^1 \times S^n)$, where the latter equality follows from: classification of surfaces if $n=1$, Thurston's Geometrization Conjecture if $n=2$ (see \cite{Anderson}), and the topological surgery sequence \cite{KS} if $n \geqslant 3$ (use \cite{FQ_book} if $n=3$).

First suppose $n=1$.
Then, as noted above, $\Phi$ is constant hence surjective.
(Since $\ell$ is odd, only the torus $S^1 \times S^1$ has $\ell$-fold cover $S^1 \times S^1$.
That is, $\Phi(\cA_\ell^1) = \{[S^1 \times S^1]\}$.)

Let $(C) \in \cA_\ell^1$.
There exists a choice of homeomorphism $h: (S^1 \times S^1)/C \longra S^1 \times S^1$.
Under the quotient map $S^1 \times S^1 \longra (S^1 \times S^1)/C$ composed with $h$, the image of the fundamental group of $S^1 \times S^1$ is a subgroup $Z$ of index $\ell$ in $\pi_1(S^1 \times S^1) = \Z \times \Z$.
There exists a nontrivial homomorphism $\phi: \Z \times \Z \longra \Z/\ell$ such that $Z = \Ker(\phi)$.
Write $a := \phi(1,0)$ and $b := \phi(0,1)$.
Post-composition with an automorphism of $\Z/\ell$ preserves the kernel $Z$, so we may assume that either $a=1$ or $(a,b)=(0,1)$.
If $a=1$ then define $A := \left[\begin{smallmatrix}1 & 0\\ -b & 1\end{smallmatrix}\right]$.
If $(a,b)=(0,1)$ then define $A := \left[\begin{smallmatrix}0 & 1\\ 1 & 0\end{smallmatrix}\right]$.
In any case, the unimodular matrix $A \in GL_2(\Z/\ell)$ carries $(a,b)$ to $(1,0)$. 
Observe $(1,0)$ corresponds to the index $\ell$ subgroup $\ell \Z \times \Z$.
There is $\delta' \in \Homeo(S^1 \times S^1)$ inducing $A$ on fundamental group.
Write $h' := \delta' \circ h$.
Then, by the lifting property of covering spaces, there exists a commutative diagram
\[\begin{tikzcd}
S^1 \times S^1 \rar[dashed]{\wh{h'}} \dar{/C} & S^1 \times S^1 \dar{/T_\ell}\\
(S^1 \times S^1)/C \rar{h'} & S^1 \times S^1.
\end{tikzcd}\]
The element $\wh{h'} \in \Homeo(S^1 \times S^1)$ conjugates $T_\ell$ into $C$.
Therefore $\Phi$ is injective.

Now suppose $n>1$ and that the orbit space of $(C) \in \cA_\ell^n$ is homeomorphic to $S^1 \times S^n$, say by a homeomorphism $h$.
Since $\pi_1(S^1 \times S^n) = C_\infty$ has a unique subgroup of index $\ell$, by the lifting property of covering spaces, there exists a commutative diagram
\[\begin{tikzcd}
S^1 \times S^n \rar{\wh{h}} \dar{/C} & S^1 \times S^n \dar{/T_\ell}\\
(S^1 \times S^n)/C \rar{h} & S^1 \times S^n.
\end{tikzcd}\]
In other words, there is $\wh{h} \in \Homeo(S^1 \times S^n)$ that conjugates $T_\ell$ into $C$.
Thus $\Phi$ restricts to
\[\begin{CD}
\Phi ~:~ \cA_\ell^n - \{(T_\ell)\} @>>>\begin{cases}
\varnothing & \text{if } n=1 \text{ or } n=2k\\
\coprod_{1<d|\ell} \coprod_{[q] \in Q_d^k} \cM^{h/s}_\TOP(X_{d,q}) & \text{if } n=2k-1 \geqslant 3.
\end{cases}\end{CD}\]

Next, we show that $\Phi$ is surjective if $n=2k-1 \geqslant 3$.
Let $M$ be a closed manifold in the homotopy type of some example $X_{d,q}$, say by a homotopy equivalence $f$.
There is a pullback diagram of covering spaces
\[\begin{tikzcd}
\wh{M} \rar{\wh{f}} \dar & S^1 \times S^n \dar{/T_{d,q}} \\
M \rar{f} & X_{d,q}.
\end{tikzcd}\]
Let $T \neq \id$ be a covering transformation of $\wh{M}$.
Since $\cM^{h/s}_\TOP(S^1 \times S^n) = \{[S^1\times S^n]\}$, there is a homeomorphism $h: \wh{M} \longra S^1 \times S^n$.
Then $T_M := h \circ T \circ h^{-1}$ is an element of $\Homeo(S^1 \times S^n)$ of order $d$ without fixed points.
Hence $M = \Phi(T_M)$ and $\Phi$ is surjective.

Finally, we show that $\Phi$ is injective if $n=2k-1 \geqslant 3$.
Let $(C), (C') \in \cA_\ell^n$ have orbit spaces $M, M'$ in the homotopy type of some example $X_{d,q}$.
Suppose there is a homeomorphism $h: M' \longra M$.
Write $\Pi := \pi_1(S^1 \times S^n)$.
Consider the lifting problem
\[\begin{tikzcd}
S^1 \times S^n \rar[dashed] \arrow{d}[swap]{p'} & S^1 \times S^n \dar{p}\\
M' \rar{h} & M.
\end{tikzcd}\]
By Corollary~\ref{cor:transitive}, there exists $\delta' \in \Homeo(M)$ such that $\delta'_\#((h \circ p')_\#(\Pi)) = p_\#(\Pi)$.
Note $h' := \delta' \circ h: M' \longra M$ satisfies $(h' \circ p')_\#(\Pi) = p_\#(\Pi)$.
Then, by the lifting property, there is $\wh{h'} \in \Homeo(S^1 \times S^n)$ covering $h'$ that conjugates $C'$ to $C$.
Therefore $\Phi$ is injective.
\end{proof}

\noindent See \cite{Thatcher} for the homotopy types of free $C_p$-actions on products of 1-connected spheres.

\section{Classification of $h$-cobordism types}\label{sec:hcobordism}

For the second stage, consider the subgroup $SI(X)$ of $\Wh_1(\pi_1 X)$ consisting of the Whitehead torsions of \emph{strongly inertial $h$-cobordisms}, that is, the torsion $\tau(W \thra X)$ of any $h$-cobordism $(W;X,X')$ such that the map $X' \hookra W \thra X$ is homotopic to a homeomorphism.

\begin{thm}\label{thm:SI}
Let $M$ and $X$ be closed connected topological manifolds of dimension $n \geqslant 4$.
If $n=4$ then assume $\pi_1 X$ is good in the sense of Freedman--Quinn \cite{FQ_book}.
If $M$ is homotopy equivalent to $X$, then $SI(M) \iso SI(X)$ as subgroups of $\Wh_1(\pi_1 M) \iso \Wh_1(\pi_1 X)$.
\end{thm}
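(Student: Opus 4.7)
The plan is to show the subgroup $SI$ of the Whitehead group is a homotopy invariant of the closed manifold, by transporting strongly inertial $h$-cobordisms across a chosen homotopy equivalence $f : M \to X$.

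Fix $f : M \to X$ with homotopy inverse $\bar f : X \to M$; the induced $f_\ast : \Wh_1(\pi_1 M) \to \Wh_1(\pi_1 X)$ is an isomorphism. By symmetry using $\bar f$, it suffices to establish $f_\ast(SI(M)) \subseteq SI(X)$. So fix $\alpha \in SI(M)$, realized by a strongly inertial $h$-cobordism $(W; M, M')$ together with a homeomorphism $g : M' \to M$ homotopic to the retraction $r_W : M' \hookra W \thra M$. Since $M \hookra W$ is a cofibration and a homotopy equivalence, $f$ extends to a homotopy equivalence $F : W \to X$. Applying the realization form of the $s$-cobordism theorem---Mazur--Stallings--Barden when $n+1 \geqslant 6$, and Freedman--Quinn \cite{FQ_book} when $n+1 = 5$ with $\pi_1 X$ good---choose an $h$-cobordism $(V; X, X')$ with $\tau(V \thra X) = f_\ast(\alpha)$.

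The heart of the proof is to produce a homeomorphism $X' \to X$ homotopic to $r_V : X' \hookra V \thra X$. My plan is to compare $V$ with the \emph{push-forward} of $W$ along $F$: the two topological structures $(M', F|_{M'})$ and $(X', r_V)$ on $X$ should have matching Whitehead torsions, up to the correction $\tau(f)$ supplied by the composition formula for torsion. The uniqueness form of the $s$-cobordism theorem then identifies $M' \cong X'$ compatibly with the retractions to $X$ up to homotopy. Composing this identification with $g : M' \to M$ and $f : M \to X$ produces the requisite candidate map $X' \to X$ homotopic to $r_V$.

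The main obstacle is ensuring this candidate map is homotopic to a homeomorphism, since $f$ itself is only a homotopy equivalence. To overcome this, I would construct $V$ more geometrically than by abstract torsion realization: thicken a mapping-cylinder gluing of $W$ onto $X$, using $f$ on the $M$-end and $f \circ g$ on the $M'$-end, to obtain an $h$-cobordism whose two boundary components are \emph{literally} $X$ and whose torsion is $f_\ast(\alpha)$ by naturality of $\tau$. Strong inertia of $V$ is then automatic, since each boundary inclusion composed with the retraction is homotopic to the identity of $X$. Executing this thickening as a topological $h$-cobordism---aligning the prescribed torsion with $f_\ast(\alpha)$ and staying in the given dimensions via $s$-cobordism uniqueness---is the technical crux.
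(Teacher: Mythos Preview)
Your direct transport approach has a genuine gap that the mapping-cylinder idea does not close. In the language of Proposition~\ref{prop:hs}, what you are trying to prove is that the isotropy subgroup of $[X,\id]$ under the right $\Wh_1(\pi_1 X)$-action on $\cS^{h/s}_\TOP(X)$ coincides with the isotropy subgroup of $[M,f]$. But these two elements need not lie in the same $\Wh_1$-orbit: they have the same image in $\cS^h_\TOP(X)$ only when $f$ is $h$-bordant to a homeomorphism, which is exactly what fails for a generic homotopy equivalence. For an abelian group acting on a set, isotropy is constant along orbits but can vary between orbits, so there is no formal reason the two stabilizers agree. Your ``thickening'' does not rescue this: gluing the mapping cylinders of $f$ and $f\circ g$ to the ends of $W$ yields a CW $h$-cobordism with both ends equal to $X$, but it is \emph{not} a manifold, and there is no procedure that converts it into a manifold $h$-cobordism while keeping both boundary components homeomorphic to $X$. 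Realization (the $s$-cobordism theorem) only pins down the torsion and the near end; the homeomorphism type of the far end $X'$ is precisely what is in question.

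The paper's proof avoids this altogether by an indirect, surgery-theoretic argument developed in Proposition~\ref{prop:SI}. First it sandwiches $SI(M)$ between the $(-1)^n$-evens and the $(-1)^n$-symmetrics of $\Wh_1(\pi_1 M)$, both of which depend only on $\pi_1$. Then it identifies the quotient $SI(M)/\text{evens}$ with $\Cok\bigl(\cS^s_\TOP(M\times I)\to\cS^h_\TOP(M\times I)\bigr)$ via a torsion homomorphism. This cokernel embeds, functorially in $M$, into $\wh{H}^n(C_2;\Wh_1(\pi_1 M))$ through a commutative diagram built from Ranicki's algebraic structure groups $\cS^{s,h}_{n+2}$ and $L$-groups---all of which are homotopy-invariant functors. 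Hence the image, and therefore $SI(M)$ itself, is carried isomorphically to $SI(X)$ by $f_*$. The key idea you are missing is this passage through $\cS_\TOP(M\times I)$ and Tate cohomology, which replaces the intractable question about individual $h$-cobordisms by a functorial one.
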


This theorem is an affirmative answer to a question raised by Jahren--Kwasik \cite[\S7]{JK_hcob}.
Later, in Section~\ref{sec:homeomorphism}, we shall develop the techniques needed to prove this theorem.  

Next, for any compact manifold $X$, write $\cS^{h/s}_\TOP(X)$ for the set
of pairs $(M,f)$, where $M$ is a compact topological manifold and $f: M \longra X$ is a homotopy equivalence that restricts to a homeomorphism $\bdry f: \bdry M \longra \bdry X$, taken up to $s$-bordism relative to $\bdry X$.
Assuming that the $s$-cobordism theorem applies, then $[M,f]=[M',f']$ if and only if $f'$ is homotopic to $f \circ h$ relative to $\bdry X$ for some homeomorphism $h: M' \longra M$.
Then observe
\[
\cM_\TOP^{h/s}(X) ~=~ \hMod(X) ~\backslash~ \cS^{h/s}_\TOP(X). 
\]
Here $\cS^{h/s}_\TOP(X)$ has a canonical left action by the group $\hMod(X)$, which consists of homotopy equivalences $X \to X$ restricting to the identity on $\bdry X$, taken up to homotopy $\rel\,\bdry X$.

The first step in proving Theorem~\ref{thm:homeo} is an observation of Jahren--Kwasik \cite[\S3]{JK_hcob}.
In the definition of $\cS^{h/s}_\TOP(X)$, weaken the equivalence relation ``$s$-bordism''  to ``$h$-bordism.''
Then the resulting set $\cS^h_\TOP(X)$ has the structure of an abelian group, according to Ranicki \cite{Ranicki_TopMan}.
Hence $\cS^h_\TOP(X)$ is more calculable; it also has a left setwise action of $\hMod(X)$.

\begin{prop}[Jahren--Kwasik]\label{prop:hs}
Let $X$ be a closed connected topological manifold of dimension $n \geqslant 4$.
If $n=4$ then assume $\pi_1 X$ is good in the sense of Freedman--Quinn~\cite{FQ_book}.
The set $\cS^{h/s}_\TOP(X)$ has a canonical right action of the Whitehead group $\Wh_1(\pi_1 X)$, so that
\[
\cS^{h}_\TOP(X) ~=~ \cS^{h/s}_\TOP(X) ~/~ \Wh_1(\pi_1 X).
\]
The isotropy group of any element $[M,f]$ in $\cS^{h/s}_\TOP(X)$ is the subgroup $f_*SI(M)$.
The forgetful map $\cS^{h/s}_\TOP(X) \longra \cS^{h}_\TOP(X)$ is equivariant with respect to the left action of $\hMod(X)$.
\end{prop}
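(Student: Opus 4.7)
The plan is to define the right $\Wh_1(\pi_1 X)$-action via the realization theorem for $h$-cobordisms, and then extract each claim directly from that definition. Given a representative $(M,f)$ and a torsion $\tau \in \Wh_1(\pi_1 X)$, the realization part of the $s$-cobordism theorem (which applies because $n \geqslant 4$, using the Freedman--Quinn good-group hypothesis \cite{FQ_book} in dimension $4$) produces an $h$-cobordism $(W;M,M')$ with $\tau(W,M) = f_*^{-1}(\tau) \in \Wh_1(\pi_1 M)$. Writing $r: W \longra M$ for the deformation retraction, set
\[
[M,f] \cdot \tau \;:=\; \bigl[\, M',\; f \circ r|_{M'} \,\bigr].
\]

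First I would verify well-definedness and that this is a group action. Two $h$-cobordisms on $M$ of equal torsion are joined by an $s$-cobordism, hence by a homeomorphism over $M$, which gives canonically identified other ends $M'$ and a homotopic composite $f \circ r|_{M'}$. Independence of the $s$-bordism representative of $[M,f]$ follows by gluing the given $s$-cobordism onto the chosen realization of $\tau$. Additivity of Whitehead torsion under stacking of $h$-cobordisms \cite[22.4]{Cohen} then shows $\tau \longmapsto (\cdot)\cdot\tau$ is a right action.

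Next I would identify the orbit set with $\cS^h_\TOP(X)$. The $h$-cobordism $W$ used in the definition is itself an $h$-bordism between $[M,f]$ and $[M,f]\cdot\tau$ over $X$, so the forgetful map $\cS^{h/s}_\TOP(X) \longra \cS^h_\TOP(X)$ factors through the set of orbits. Conversely, any $h$-bordism $(V;M_1,M_2)$ over $X$ produces $\tau := f_{1*}\tau(V,M_1) \in \Wh_1(\pi_1 X)$ sending $[M_1,f_1]$ to $[M_2,f_2]$ in $\cS^{h/s}_\TOP(X)$, so the induced map on orbits is a bijection.

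The isotropy claim is then read off. An equality $[M,f]\cdot\tau = [M,f]$ in $\cS^{h/s}_\TOP(X)$ means, by the description of the equivalence relation, that the realizing $h$-cobordism $W$ for $f_*^{-1}(\tau)$ admits a homeomorphism $h: M' \longra M$ with $f \circ h \simeq f \circ r|_{M'}$. Since $f$ is a homotopy equivalence, this forces $h \simeq r|_{M'}$, so the end inclusion $M' \hookra W \xra{r} M$ is homotopic to a homeomorphism. That is exactly the defining condition of a strongly inertial $h$-cobordism, so the stabilizer of $[M,f]$ is $f_*(SI(M))$. Finally, the left $\hMod(X)$-action on both $\cS^{h/s}_\TOP(X)$ and $\cS^h_\TOP(X)$ is by post-composition $(M,f) \longmapsto (M, g \circ f)$, which manifestly preserves both bordism relations; hence the forgetful map is $\hMod(X)$-equivariant. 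The main obstacle will be well-definedness of the action: $s$-cobordism realizations are only canonical up to product cobordisms, and one must propagate those identifications through the stacking formula (including the transition of coefficients from $\Wh_1(\pi_1 M)$ to $\Wh_1(\pi_1 M')$ along the deformation retraction) to produce a bona fide group action rather than merely a set-theoretic operation. In dimension four this whole chain of arguments rests on activating the topological $s$-cobordism theorem via the good-group hypothesis on $\pi_1 X$.
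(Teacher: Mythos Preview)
Your proposal is correct and follows essentially the same approach as the paper's own proof: define the right action by realizing $f_*^{-1}(\tau)$ as the torsion of an $h$-cobordism $(W;M,M')$ and pushing $f$ across via the retraction, use uniqueness of $h$-cobordisms of given torsion (rel $M$) for well-definedness, and read off the isotropy by cancelling $f$ on both sides of $f\circ h \simeq f\circ r|_{M'}$ to identify the stabilizer with $f_*SI(M)$. The paper is slightly terser on well-definedness and the group-action axioms, while you spell out the stacking argument and the passage from $s$-bordism to homeomorphism more explicitly, but there is no substantive difference in method.
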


Only the isotropy group of $[M,f]=[X,\id]$ is proven in \cite[\S3]{JK_hcob}; we prove the others.

\begin{proof}
Recall the canonical left action.
Let $\gamma \in \hMod(X)$ and $[M,f] \in \cS^{h/s}_\TOP(X)$.
Define
\[
\gamma \cdot [M,f] ~:=~ [M,\gamma\circ f].
\]
The left action on $\cS^h_\TOP(X)$ has the same formula, so the forgetful map is equivariant.

Next, recall the canonical right action.
Let $[M,f] \in \cS^{h/s}_\TOP(X)$ and $\alpha \in \Wh_1(\pi_1 X)$.
By realization, there is an $h$-cobordism $(W;M,M')$ with torsion $\tau(W \thra M) = f_*^{-1}(\alpha)$.
Define
\[
[M,f] \cdot \alpha ~:=~ [M',f \circ (M \twoheadleftarrow W \leftarrow M')].
\]
This is well-defined in $\cS^{h/s}_\TOP(X)$ since $(W;M,M')$ is unique up to homeomorphism $\rel M$.
Thus the forgetful map induces a function $\cS^{h/s}_\TOP(X) / \Wh_1(\pi_1 X) \longra \cS^h_\TOP(X)$, a bijection.

Finally, we determine isotropy groups of the right action.
Clearly $f_*SI(M)$ fixes $[M,f]$.
Suppose $[M,f] \cdot \alpha = [M,f]$.
Abbreviate the homotopy equivalence $g_\alpha := (M' \hookra W \thra M)$.
Then $f \circ g_\alpha$ is $s$-bordant to $f$.
By the $s$-cobordism theorem, there exists a homeomorphism $h: M' \longra M$ such that $f \circ g_\alpha$ is homotopic to $f \circ h$.
By post-composition with a homotopy inverse $\ol{f}: X \longra M$ of $f$, we have $g_\alpha$ is homotopic to $h$.
Therefore $f_*^{-1}(\alpha) \in SI(M)$.
\end{proof}

In general, when $X=S^1 \times Y$, the Ranicki--Shaneson decomposition for $L^h$-groups \cite{Ranicki_AlgL2} induces a corresponding decomposition for the $h$-structure groups \cite[C1]{Ranicki_TopMan}.

\begin{prop}[Ranicki]\label{prop:Shaneson}
Let $Y$ be a topological space, and let $m$ be an integer.
There is a functorial isomorphism of algebraic structure groups:
\[
\cS^h_m(S^1 \times Y) ~\iso~ \cS^h_m(Y) ~\oplus~ \cS^p_{m-1}(Y).
\]
\end{prop}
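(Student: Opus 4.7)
The plan is to derive the splitting for structure groups by comparing Ranicki's algebraic surgery exact sequences for $S^1 \times Y$ and for $Y$, invoking already-known Shaneson-type splittings on the $L$-theory and $\bL_\bullet$-homology terms. First, I would recall the algebraic surgery exact sequence from \cite{Ranicki_TopMan}:
\[
\cdots \longra H_m(X;\bL_\bullet) \longra L^h_m(\Z[\pi_1 X]) \longra \cS^h_m(X) \longra H_{m-1}(X;\bL_\bullet) \longra L^h_{m-1}(\Z[\pi_1 X]) \longra \cdots
\]
and write it out for $X = S^1 \times Y$.

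Second, I would split the $\bL_\bullet$-homology of $S^1 \times Y$ via the split cofibre sequence $Y_+ \hookra (S^1 \times Y)_+ \thra \Sigma Y_+$ arising from the CW-structure $S^1 = e^0 \cup e^1$. This yields a natural decomposition
\[
H_m(S^1 \times Y;\bL_\bullet) ~\iso~ H_m(Y;\bL_\bullet) ~\oplus~ H_{m-1}(Y;\bL_\bullet)
\]
compatible with the inclusion $i: Y \hookra S^1 \times Y$ and projection $r: S^1 \times Y \thra Y$. Third, I would invoke the $L^h$-theoretic Ranicki--Shaneson splitting
\[
L^h_m(\Z[\pi_1 Y \times C_\infty]) ~\iso~ L^h_m(\Z[\pi_1 Y]) ~\oplus~ L^p_{m-1}(\Z[\pi_1 Y])
\]
from Ranicki's \emph{Algebraic L-theory II}, with the first summand coming from $i$ and $r$ and the second from a transfer reflecting the $S^1$-direction.

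Fourth, I would assemble a ladder of two long exact sequences: the Ranicki sequence for $S^1 \times Y$, and the direct sum of the Ranicki sequence for $Y$ (in $h$-decoration) with the degree-shifted Ranicki sequence for $Y$ (in $p$-decoration). Both splittings on the outer four terms of each four-term segment are already established, so a five-lemma argument on the middle term delivers
\[
\cS^h_m(S^1 \times Y) ~\iso~ \cS^h_m(Y) ~\oplus~ \cS^p_{m-1}(Y),
\]
and functoriality in $Y$ is inherited from functoriality of the three ingredients.

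The hard part will be the compatibility at the level of connecting maps: the decoration-shift $h \leadsto p$ on $L$-groups must match the corresponding shift on structure groups, so that the map $L^h_m(\Z[\pi_1 Y]) \longra \cS^h_m(Y)$ and the map $L^p_{m-1}(\Z[\pi_1 Y]) \longra \cS^p_{m-1}(Y)$ together form the Ranicki assembly map for $S^1 \times Y$ under the splittings. This is really the content of Ranicki's algebraic splitting theorem: it is where one must enter the internal structure of the $\bL$-theory spectra and the decoration-comparison exact sequence rather than simply quoting the Shaneson splittings for $H_\bullet$ and $L_\bullet$ separately.
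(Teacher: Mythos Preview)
The paper does not give a proof of this proposition: it is stated as a result of Ranicki and cited directly to \cite[C1]{Ranicki_TopMan}, with the remark that the Ranicki--Shaneson decomposition for $L^h$-groups \cite{Ranicki_AlgL2} induces the corresponding decomposition for $h$-structure groups. So there is no in-paper argument against which to compare your attempt.

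That said, your outline is the standard route and matches what Ranicki actually does in his book: one splits the $\bL_\bullet$-homology of $S^1\times Y$ via the cofibre sequence for the circle, splits the $L$-groups via the algebraic Shaneson theorem, and then checks that the assembly maps respect both splittings so that the structure-group splitting follows. You have also correctly located the genuine content: the five-lemma step is not automatic, because one must verify that the decoration shift $h\leadsto p$ on the $L$-group summand is compatible with the corresponding shift on the structure-group summand under the connecting maps. This is exactly the delicate point, and it is what Ranicki handles by working at the level of algebraic bordism categories rather than treating the $H_\bullet$ and $L_\bullet$ splittings as black boxes. Your sketch is honest about this gap and does not pretend it is formal; as a reconstruction of the cited result it is accurate.
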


Further suppose that $Y$ is a closed connected topological manifold of dimension $n-1$.
The total surgery obstruction of Ranicki \cite[Theorem~18.5]{Ranicki_TopMan} gives the identifications
\[\begin{tikzcd}
\cS^h_\TOP(S^1 \times Y) \rar{s}[swap]{\approx} & \cS^h_{n+1}(S^1 \times Y)
\end{tikzcd}
\text{and}
\begin{tikzcd}
\cS^h_\TOP(I \times Y) \rar{s}[swap]{\approx} & \cS^h_{n+1}(Y).
\end{tikzcd}
\]
Since $s$ exists for all dimensions $n$, by the Five Lemma applied to the 4-dimensional surgery sequence \cite[\S 11.3]{FQ_book}, we also have these bijections when $n=4$ and $\pi_1 Y$ is finite.

The next two lemmas determine certain $\cS_*(Y)$ when $Y$ is a lens space of odd order.

\begin{lem}\label{lem:crossI}
Let $d>1$ be odd, select $q$ coprime to $d$, and let $k > 1$.
Then $\cS^{s,h}_{2k+1}(L_{d,q}^{2k-1}) = 0$.
\end{lem}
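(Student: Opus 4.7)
The plan is to invoke Ranicki's algebraic surgery exact sequence
\[
L^{s,h}_{2k+1}(C_d) \longra \cS^{s,h}_{2k+1}(L_{d,q}^{2k-1}) \longra H_{2k}(L_{d,q}^{2k-1}; \bL_\bullet) \xra{A} L^{s,h}_{2k}(C_d)
\]
together with Bak's theorem, which gives $L^{s,h}_{\mathrm{odd}}(\Z G) = 0$ for any finite group $G$ of odd order. Exactness then collapses the sequence so that $\cS^{s,h}_{2k+1}(L_{d,q}^{2k-1}) \iso \Ker A$, and the problem reduces to showing that the assembly map $A$ is injective.

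To compute the source of $A$, I would run the Atiyah--Hirzebruch spectral sequence with $E_2^{p,q} = H_p(L_{d,q}^{2k-1}; L_q(1)) \Rightarrow H_{p+q}(L_{d,q}^{2k-1}; \bL_\bullet)$. Two inputs make the analysis clean: since $d$ is odd, $H_*(L_{d,q}^{2k-1}; \Z/2)$ is $\Z/2$ concentrated in degrees $0$ and $2k-1$, while $H_p(L_{d,q}^{2k-1}; \Z)$ is $\Z/d$ in odd degrees $0 < p < 2k-1$ and vanishes in the intermediate even ones; and the Wall groups $L_q(1)$ are $\Z, 0, \Z/2, 0$ four-periodically. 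A direct case check along the total-degree-$2k$ diagonal then shows that the only nonzero $E_2$-entry is $E_2^{0,2k} = L_{2k}(1)$, which equals $\Z$ for $k$ even and $\Z/2$ for $k$ odd. Every potentially nonzero incoming differential would send a subquotient of $\Z/d$ into $\Z$ or $\Z/2$ and thus vanishes because $d$ is odd. Hence $H_{2k}(L_{d,q}^{2k-1}; \bL_\bullet) \iso L_{2k}(1)$, with the edge map onto $\pi_{2k}(\bL_\bullet)$ an isomorphism.

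Next, I would establish injectivity of $A$ by a retraction argument. Naturality of assembly applied to the map $L_{d,q}^{2k-1} \longra \pt$ yields a commutative square
\[
\begin{CD}
H_{2k}(L_{d,q}^{2k-1}; \bL_\bullet) @>A>> L^{s,h}_{2k}(C_d) \\
@VVV @VV{\eps_*}V \\
L_{2k}(1) @= L_{2k}(1)
\end{CD}
\]
whose left column is the edge map just identified as an isomorphism and whose right column $\eps_*$, induced by the group augmentation $C_d \thra 1$, retracts the canonical inclusion $L_{2k}(1) \hookra L^{s,h}_{2k}(C_d)$. Thus $\eps_* \circ A$ is an isomorphism, which forces $A$ to be injective and so $\cS^{s,h}_{2k+1}(L_{d,q}^{2k-1}) = 0$.

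The hard part, such as it is, will be the spectral sequence bookkeeping: verifying entry-by-entry that the $2k$-diagonal at $E_2$ has the claimed single nontrivial entry, and that no higher differential can rescue a $d$-torsion class on its way into the top-cell contribution. Both come down to $\gcd(d, 2) = 1$ and are routine once the diagram is laid out, after which the augmentation argument is immediate.
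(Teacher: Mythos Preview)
Your proposal is correct and follows essentially the same route as the paper: invoke the algebraic surgery exact sequence, kill the left term by Bak's vanishing $L^{s,h}_{2k+1}(C_d)=0$, compute $H_{2k}(L^n;\bL\langle 1\rangle)\cong L_{2k}(1)$ via the Atiyah--Hirzebruch spectral sequence (using that $d$ is odd to kill all $\Z/2$- and $d$-torsion contributions), and conclude injectivity of assembly. The only difference is cosmetic: the paper leaves the injectivity of $L_{2k}(1)\to L^{s,h}_{2k}(C_d)$ implicit, whereas you spell out the augmentation-retraction square; also note that Ranicki's sequence uses the $1$-connective cover $\bL\langle 1\rangle$ rather than the periodic spectrum, though this distinction is immaterial in total degree $2k$ on a $(2k-1)$-manifold.
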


\begin{proof}
Write $L^n := L_{d,q}^{2k-1}$.
Consider the $s$- or $h$-algebraic surgery exact sequence \cite{Ranicki_TopMan}:
\[\begin{tikzcd}
L_{2k+1}^{s,h}(C_d) \rar & \cS^{s,h}_{2k+1}(L^n) \rar & H_{2k}(L^n;\bL\!\gens{1}) \rar{\sigma_{2k}^{s,h}} & L_{2k}^{s,h}(C_d).
\end{tikzcd}\]
First, since $d$ is odd, $L_{2k+1}^{s,h}(C_d)=0$ by Bak's vanishing result \cite{Bak_oddL}.
Next, we apply the Atiyah--Hirzebruch spectral sequence to the homological version of the normal invariants:
\[
E^2_{i,j} = H_i(L^n; L\!\gens{1}_j) ~\Longrightarrow~ H_{i+j}(L^n;\bL\!\gens{1}).
\]
The coefficient group $L\!\gens{1}_j$ vanishes for $j \leqslant 0$ or $j$ odd.
Otherwise, it either is $\Z$ if $j \equiv 0 \!\pmod{4}$ or is $\Z/2$ if $j \equiv 2 \!\pmod{4}$.
Note that $\wt{H}_{even}(L^n;\Z) = 0$ and, since $d$ is odd, that $\wt{H}_{even}(L^n;\Z/2) = 0$.
Thus the diagonal entries $i+j=even$ are zero except along $i=0$.
Also note that $H_{odd}(L^n;\Z) \in \{0, \Z/d, \Z\}$ and, since $d$ is odd, that $H_{odd}(L^n;\Z/2)=0$.
Therefore, since the image of an odd-order group in either $\Z$ or $\Z/2$ is zero, in summary we obtain:
\begin{equation}\label{eqn:evenhomology}
H_{2k}(L^n;\bL\!\gens{1}) = E^\infty_{0,2k} = E^2_{0,2k} = L\!\gens{1}_{2k} = L_{2k}(1).
\end{equation}
Thus the assembly map is injective, $\sigma_{2k}^{s,h}: L_{2k}(1) \longra L_{2k}^{s,h}(C_d)$.
Hence $\cS^{s,h}_{2k+1}(L^n)=0$.
\end{proof}

\begin{lem}\label{lem:projective}
Let $d>1$ be odd, select $q$ coprime to $d$, and let $k > 1$.
Then $\cS^p_{2k}(L_{d,q}^{2k-1})$ is free abelian of rank $(d-1)/2$.
Moreover, $\wt{L}_{2k}^p(C_d) \longra \cS^p_{2k}(L_{d,q}^{2k-1})$ is injective with finite index.
\end{lem}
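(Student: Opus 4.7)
The plan is to apply the $p$-decorated algebraic surgery exact sequence of \cite{Ranicki_TopMan} to $L^n := L_{d,q}^{2k-1}$ and reduce to the computations already done for $\cS^{s,h}_{2k+1}$ in Lemma~\ref{lem:crossI}. The relevant segment is
\[\begin{tikzcd}[column sep=small]
H_{2k}(L^n;\bL\!\gens{1}) \rar{\sigma_{2k}^p} & L_{2k}^p(C_d) \rar & \cS^p_{2k}(L^n) \rar & H_{2k-1}(L^n;\bL\!\gens{1}) \rar & L^p_{2k-1}(C_d),
\end{tikzcd}\]
and Bak's theorem \cite{Bak_oddL} applied to the $p$-decoration kills the right-hand term since $d$ is odd. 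From \eqref{eqn:evenhomology} the leftmost group is $L_{2k}(1)=\Z$ and the assembly factors as the split monomorphism $\Z = L_{2k}^p(1) \hookra L_{2k}^p(C_d)$, with cokernel the reduced group $\wt L^p_{2k}(C_d)$. This extracts the short exact sequence
\[
0 \longra \wt L^p_{2k}(C_d) \longra \cS^p_{2k}(L^n) \longra H_{2k-1}(L^n;\bL\!\gens{1}) \longra 0,
\]
which in particular gives the claimed injectivity on the nose.

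To establish the finite-index statement, I would next show that the cokernel is finite. Running the Atiyah--Hirzebruch spectral sequence $E^2_{i,j} = H_i(L^n; L\!\gens{1}_j)$ in total degree $2k-1$, the nonzero contributions require $i$ odd and $j$ even with $j \geqslant 2$. When $j \equiv 2 \pmod{4}$ the coefficient is $\Z/2$, but since $d$ is odd the group $H_i(L^n;\Z/2)$ vanishes for odd $i$ in the range $1 \leqslant i \leqslant 2k-3$. When $j \equiv 0 \pmod{4}$ the coefficient is $\Z$, and $H_i(L^n;\Z) = \Z/d$ in the same range. Hence every surviving $E^2$-entry is $d$-torsion, so $H_{2k-1}(L^n;\bL\!\gens{1})$ is a finite $d$-torsion group, which forces $\wt L^p_{2k}(C_d) \longra \cS^p_{2k}(L^n)$ to have finite cokernel.

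The rank $(d-1)/2$ comes from Wall's multisignature together with the isotypical splitting $\Q C_d = \prod_{e \,|\, d} \Q(\zeta_e)$. Each cyclotomic field with $e > 1$ is CM under the natural involution $g \longmapsto g^{-1}$ and contributes $\varphi(e)/2$ to the free rank of $L_{2k}^p(C_d)$, while the trivial representation contributes $\Z = L_{2k}(1)$ when $2k \equiv 0 \pmod{4}$ and $0$ when $2k \equiv 2 \pmod{4}$; the Rothenberg passage from $L^h$ to $L^p$ is only $2$-torsion and so does not affect the rank. Summing $\sum_{e \,|\, d,\, e > 1} \varphi(e)/2 = (d-1)/2$ and subtracting the trivial-rep contribution yields $\wt L^p_{2k}(C_d)$ of free rank $(d-1)/2$. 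Combined with the previous step, $\cS^p_{2k}(L^n)$ is finitely generated of rank $(d-1)/2$ with the indicated finite-index subgroup. The main obstacle is promoting ``rank $(d-1)/2$'' to ``free abelian of rank $(d-1)/2$,'' i.e.\ ruling out torsion in $\cS^p_{2k}(L^n)$; I would do this by pushing forward via the projective $\rho$-invariant alluded to just before Remark~\ref{rem:class}, which uses Atiyah--Singer $G$-signatures to produce an injection $\cS^p_{2k}(L^n) \hookra \Z^{(d-1)/2}$ and thereby force $\cS^p_{2k}(L^n)$ to be torsion-free.
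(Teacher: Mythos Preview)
Your overall architecture matches the paper's: run the $p$-decorated algebraic surgery exact sequence, identify $H_{2k}(L^n;\bL\!\gens{1}) = L_{2k}(1)$ from \eqref{eqn:evenhomology}, extract the short exact sequence
\[
0 \longra \wt L^p_{2k}(C_d) \longra \cS^p_{2k}(L^n) \longra H_{2k-1}(L^n;\bL\!\gens{1}) \longra 0,
\]
and show the right term is finite of odd order via the Atiyah--Hirzebruch spectral sequence. Two small points: (i) you invoke Bak \cite{Bak_oddL} directly for the $p$-decoration, but that reference treats $L^{s,h}$; the paper instead notes the assembly $H_{2k-1} \to L^p_{2k-1}$ factors through $L^h_{2k-1}(C_d)=0$, which is all that is needed (the vanishing of $L^p_{\text{odd}}$ itself is Bak--Kolster, cited later in the paper). (ii) The paper reads off both the rank and the \emph{freeness} of $\wt L^p_{2k}(C_d)$ in one stroke from $\R C_d = \R \times \C^{(d-1)/2}$ and the projective multisignature \cite{Bak_evenL}, which is quicker than your $\Q C_d$ cyclotomic decomposition.

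The genuine gap is in your torsion-freeness step. You propose an injection $\cS^p_{2k}(L^n) \hookra \Z^{(d-1)/2}$ via a ``projective $\rho$-invariant,'' citing the phrase before Remark~\ref{rem:class}. But that phrase is an informal, after-the-fact description of the classification, not a construction you can invoke here; the $\rho$-invariant in the literature (Wall \cite[14E.7]{Wall_book}, Macko--Wegner \cite{MW}) is established on the \emph{simple} structure set $\cS^s_\TOP(L^n)$, not on the algebraic group $\cS^p_{2k}$. The paper's argument is indirect but rigorous: the Wall/Macko--Wegner injection shows $\cS^s_{2k}(L^n)$ is free abelian; the Ranicki--Rothenberg sequences then force the passages $\cS^s_{2k} \to \cS^h_{2k} \to \cS^p_{2k}$ to have kernels and cokernels of exponent two, so $\cS^p_{2k}(L^n)$ has no \emph{odd} torsion. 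Since your own spectral-sequence computation shows $H_{2k-1}(L^n;\bL\!\gens{1})$ has odd order, any torsion in $\cS^p_{2k}(L^n)$ would have to be odd (the subgroup $\wt L^p_{2k}(C_d)$ being free), and this contradiction finishes the proof. Your sketch is missing this two-step transfer from $\cS^s$ to $\cS^p$.
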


\begin{proof}
Write $L^n := L_{d,q}^{2k-1}$.
Consider the $p$-algebraic surgery exact sequence \cite{Ranicki_TopMan}:
\[\begin{tikzcd}
H_{2k}(L^n;\bL\!\gens{1}) \rar{\sigma_{2k}^p} & L_{2k}^p(C_d) \rar & \cS^p_{2k}(L^n) \rar & H_{2k-1}(L^n;\bL\!\gens{1}) \rar{\sigma_{2k-1}^p} & L_{2k-1}^p(C_d).
\end{tikzcd}\]
From the proof of Lemma~\ref{lem:crossI}, the edge map $L_{2k}(1) \longra H_{2k}(L^n;\bL\!\gens{1})$ is an isomorphism, so $\sigma_{2k}^p$ is split injective.
Also $\sigma_{2k-1}^p$ is zero, since it factors through $L_{2k-1}^h(C_d)=0$ above.
So we obtain an exact sequence of abelian groups:
\[\begin{tikzcd}
0 \rar & \wt{L}_{2k}^p(C_d) \rar & \cS^p_{2k}(L^n) \rar & H_{2k-1}(L^n;\bL\!\gens{1}) \rar & 0.
\end{tikzcd}\]

Since $\R C_d = \R \times \prod^{(d-1)/2} \C$ as rings, the reduced $L$-group $\wt{L}_{2k}^p(C_d)$ is free abelian of rank $(d-1)/2$, and it is detected by the projective multi-signature \cite{Bak_evenL}.
From the same Atiyah--Hirzebruch spectral sequence as in the proof of Lemma~\ref{lem:crossI}, since $d$ is odd, note:
\[
E^2_{i,j} = H_i(L^n; L\!\gens{1}_j) = \begin{cases}
\Z & \text{if } i=2k-1, \text{ and } 4 \text{ divides } j>0\\
\Z/d & \text{if } 0<i<2k-1 \text{ odd, } 4 \text{ divides } j>0\\
0 & \text{otherwise}
\end{cases}
~\Longrightarrow~ H_{i+j}(L^n;\bL\!\gens{1}).
\]
Then each $E^\infty_{i,j}$ is either zero or $\Z/\delta$ with $\delta \,|\, d$.
Thus it follows that $H_{2k-1}(L^n;\bL\!\gens{1})$ is a finite abelian group of odd order.\footnote{A more detailed analysis can show furthermore that $H_{2k-1}(L^n;\bL\!\gens{1}) \to H_{2k-1}(L^n;ko[\frac{1}{2}])$ is an isomorphism.}
Therefore it remains to show that $\cS^p_{2k}(L^n)$ has no odd torsion.

The function $\cS^s_\TOP(L^n) \longra \Q \otimes_\Z R_{\wh{G}}^{(-1)^k}$, defined by the difference of $\rho$-invariants, was shown by Wall to be injective \cite[Theorem~14E.7]{Wall_book}.\footnote{For the case of $k=2$, we use the \emph{simple homology structure set} of the 3-dimensional lens space $L^3 = L(d,q)$.}
Later, Macko--Wegner promoted this function to a homomorphism of abelian groups and reproved its injectivity \cite[Theorem~5.2]{MW}.
So $\cS^s_\TOP(L^n)$ is free abelian.
By the Ranicki--Rothenberg exact sequences \cite[p327]{Ranicki_TopMan}, $\cS^s_{2k}(L^n) \longra \cS^h_{2k}(L^n)$ and $\cS^h_{2k}(L^n) \longra \cS^p_{2k}(L^n)$ have kernels and cokernels of exponent two.
Hence $\cS^p_{2k}(L^n)$ has no odd torsion; it is free abelian of rank $(d-1)/2$.
\end{proof}

\begin{cor}\label{cor:h}
Let $d>1$ be odd, select $q$ coprime to $d$, and let $k > 1$.
Then the group $\cS^h_\TOP(S^1 \times L_{d,q}^{2k-1})$ is free abelian of rank $(d-1)/2$.
Moreover, the component homomorphism $\wt{L}_{2k}^p(C_d) \longra L_{2k+1}^h(\pi_1 X_{d,q}) \longra \cS^h_\TOP(X_{d,q})$ of Wall realization is injective with finite index.
\end{cor}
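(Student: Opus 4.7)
The plan is to combine the Ranicki--Shaneson decomposition (Proposition~\ref{prop:Shaneson}) with Lemmas~\ref{lem:crossI} and~\ref{lem:projective}. First I would apply Proposition~\ref{prop:Shaneson} with $Y = L_{d,q}^{2k-1}$ and $m = 2k+1$ to obtain
\[
\cS^h_{2k+1}(S^1 \times L_{d,q}^{2k-1}) ~\iso~ \cS^h_{2k+1}(L_{d,q}^{2k-1}) ~\oplus~ \cS^p_{2k}(L_{d,q}^{2k-1}).
\]
By Lemma~\ref{lem:crossI} the first summand vanishes, and by Lemma~\ref{lem:projective} the second summand is free abelian of rank $(d-1)/2$, with $\wt{L}^p_{2k}(C_d)$ sitting inside as a finite-index subgroup.

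Next I would identify the topological set $\cS^h_\TOP(X_{d,q})$ with the algebraic group $\cS^h_{2k+1}(X_{d,q})$ via Ranicki's total surgery obstruction. For $k \geqslant 3$ this is the bijection recorded after Proposition~\ref{prop:Shaneson}; when $k=2$ (so $\dim X_{d,q}=4$), the group $\pi_1(X_{d,q})= C_\infty \times C_d$ is abelian and hence good in the sense of Freedman--Quinn, and since $\pi_1(L^3_{d,q}) = C_d$ is finite, the Five Lemma argument recalled there supplies the bijection in this dimension too. Combining with the first paragraph gives the first claim of the corollary.

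For the second claim I would invoke that Ranicki's construction of the Shaneson splitting of $\cS^h_*$ is compatible with the analogous $L$-theoretic Shaneson splitting
\[
L^h_{2k+1}(C_\infty \times C_d) ~\iso~ L^h_{2k+1}(C_d) ~\oplus~ L^p_{2k}(C_d)
\]
under Wall realization, by naturality of the algebraic surgery exact sequence~\cite{Ranicki_TopMan}. Consequently the composition in the statement factors as
\[
\wt{L}^p_{2k}(C_d) \longra \cS^p_{2k}(L_{d,q}^{2k-1}) \hookra \cS^h_{2k+1}(X_{d,q}) \iso \cS^h_\TOP(X_{d,q}),
\]
in which the first arrow is the map already shown in Lemma~\ref{lem:projective} to be injective with finite index, and the second is the summand inclusion from the first paragraph. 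The main obstacle is precisely verifying this compatibility of splittings, but it is a routine consequence of Ranicki's spectrum-level formulation: both splittings arise from the same pushforward along $S^1 \longra \pt$ of assembly spectra, so Wall realization intertwines them.
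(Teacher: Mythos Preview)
Your proposal is correct and follows exactly the route the paper takes: the paper's proof is the single sentence ``This is immediate from Proposition~\ref{prop:Shaneson}, Lemma~\ref{lem:crossI}, and Lemma~\ref{lem:projective},'' and you have simply unpacked that immediacy, including the identification $\cS^h_\TOP(X_{d,q}) \iso \cS^h_{2k+1}(X_{d,q})$ via the total surgery obstruction and the compatibility of the Shaneson splittings under Wall realization. Your handling of the low-dimensional case $k=2$ (invoking that $\pi_1(L^3_{d,q})=C_d$ is finite so the Five Lemma argument after Proposition~\ref{prop:Shaneson} applies) is also exactly what the paper has in mind.
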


\begin{proof}
This is immediate from Proposition~\ref{prop:Shaneson}, Lemma~\ref{lem:crossI}, and Lemma~\ref{lem:projective}.
\end{proof}

\section{Application to the `Dehn twist' homeomorphism}\label{sec:eps}

Fix $n=2k-1\geqslant 3$.
Recall the self-homeomorphism $\eps$ of $X_{d,q} = S^1 \times L^n_{d,q}$ in Equation~\eqref{eqn:epsh}.

\begin{lem}\label{lem:eps}
Let $d>1$ be an odd integer, and select $q$ coprime to $d$.
\begin{enumerate}
\item
The self-map $\eps$ induces the identity map on $\Wh_1(\pi_1 X_{d,q})$ if $d$ is square-free.

\item
The self-map $\eps$ induces the identity map on $\cS_\TOP^h(X_{d,q})$.

\item
The self-map $\eps^2$ induces the identity map on $\cS_\TOP^s(X_{d,q})$.
\end{enumerate}
\end{lem}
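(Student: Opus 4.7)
The three parts are handled in sequence: part~(1) rests on a Bass--Heller--Swan computation, part~(2) on injective $\rho$-style detection, and part~(3) on a Ranicki--Rothenberg assembly of the previous two.

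For part~(1), observe that $\eps_\#$ acts on $\pi_1 X_{d,q} = C_\infty \times C_d$ by $t \mapsto ts$ and $s \mapsto s$, which extends to a ring automorphism of $\Z[C_d][t^{\pm 1}]$. Since the subrings $\Z[C_d][t]$ and $\Z[C_d][ts]$ coincide, this automorphism preserves the Bass--Heller--Swan splitting
$$\Wh_1(C_\infty \times C_d) \;\iso\; \Wh_1(C_d) \,\oplus\, \wt K_0(\Z C_d) \,\oplus\, NK_1(\Z C_d) \,\oplus\, NK_1(\Z C_d).$$
For $d$ square-free odd, the two Nil-summands vanish by Harmon's theorem combined with a Mayer--Vietoris decomposition of $\Z C_d$ into a fibre product of maximal orders. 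On the $\Wh_1(C_d)$-summand the action is the identity by inspection (representing matrices involve no $t$). On the $\wt K_0$-summand with its splitting $[P] \longmapsto [t \cdot \id_P]$, substitution $t \mapsto ts$ yields $[ts \cdot \id_P] = [s \cdot \id_P] + [t \cdot \id_P]$; the extra $K_1$-term $[s \cdot \id_P]$ has determinant $s^{\mathrm{rank}(P)}$, a root of unity in $(\Z C_d)^{\times}$, so it vanishes in $\Wh_1(C_d)$.

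For part~(2), Corollary~\ref{cor:h} identifies $\cS_\TOP^h(X_{d,q})$ as free abelian of rank $(d-1)/2$, with a finite-index subgroup $\wt L_{2k}^p(C_d)$ arising from Wall realization via the Ranicki--Shaneson splitting of Proposition~\ref{prop:Shaneson}. Since the abelianization projection $\pi : C_\infty \times C_d \thra C_\infty$ satisfies $\pi \circ \eps_\# = \pi$, the Shaneson splitting is natural under $\eps_\#$, and on the $\wt L_{2k}^p(C_d)$-summand the induced action reduces to that of $\eps_\#|_{C_d} = \id_{C_d}$, namely the identity. Equivalently, $\eps$ lifts to a self-homeomorphism of the infinite cyclic cover $\R \times L_{d,q}^{2k-1}$, so projective $\rho$-invariants of infinite cyclic covers are preserved by $\eps$. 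Hence $\eps_*$ is the identity on the finite-index subgroup; by torsion-freeness, on all of $\cS_\TOP^h(X_{d,q})$.

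For part~(3), invoke the Ranicki--Rothenberg exact sequence
$$\cdots \longra \wh H^1(C_2;\Wh_1(\pi_1 X_{d,q})) \longra \cS_\TOP^s(X_{d,q}) \xra{\pi} \cS_\TOP^h(X_{d,q}) \longra \wh H^0(C_2;\Wh_1(\pi_1 X_{d,q})) \longra \cdots.$$
By part~(2), $\eps_*$ acts as the identity on $\cS_\TOP^h(X_{d,q})$, so $\delta := \eps_* - \id$ factors through the kernel $K$ of $\pi$, a quotient of $\wh H^1(C_2;\Wh_1)$ of exponent two. Therefore $2\delta = 0$, giving
$$\eps_*^2 - \id \;=\; (\id + \delta)^2 - \id \;=\; 2\delta + \delta^2 \;=\; \delta^2.$$
By part~(1), $\eps_*$ is the identity on $\Wh_1(\pi_1)$, hence on the Tate-cohomology quotient $K$, so $\delta|_K = 0$ and $\delta^2 = 0$. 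The main obstacle is the technical step in part~(1): verifying that the extra BHS-term $[s \cdot \id_P]$ descends to a trivial unit class in $\Wh_1(C_d)$, together with importing the vanishing of $NK_1(\Z C_d)$ for square-free $d$. Parts~(2) and~(3) then follow by relatively formal arguments.
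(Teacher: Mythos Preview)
Your arguments for parts~(1) and~(3) are essentially the paper's, with cosmetic differences. In~(1) the paper also computes $\eps_*(h[P]) - h[P] = [s:P\to P]$; it then disposes of this term by observing $d\cdot[s:P\to P]=0$ together with torsion-freeness of $\Wh_1(C_d)$, rather than by a determinant argument. Your determinant step is defensible (over the commutative ring $\Z C_d$ a projective module has constant rank, and $SK_1(\Z C_d)=0$ by Bass--Milnor--Serre), but you should say so explicitly. Part~(3) is identical in substance: the paper writes $\eps_*$ in block-triangular form on $\cS^s = H \oplus \iota K$ and squares.

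The gap is in part~(2). You assert that ``since $\pi\circ\eps_\# = \pi$, the Shaneson splitting is natural under $\eps_\#$, and on the $\wt L^p_{2k}(C_d)$-summand the induced action reduces to that of $\eps_\#|_{C_d}=\id$.'' The condition $\pi\circ\eps_\#=\pi$ is equivalent to saying that $\eps_\#:\Z[C_d][t,t^{-1}]\to\Z[C_d][t,t^{-1}]$ is a graded $\Z[C_d]$-algebra map of degree~$0$. This certainly shows $\eps_*$ is the identity on the $L^h_{2k+1}(C_d)$ summand (the image of the coefficient inclusion). But it does \emph{not} automatically show that $\eps_*$ acts as the identity on the $L^p_{2k}(C_d)$ summand: Ranicki's splitting $\ol{B}:L^p_{2k}(C_d)\to L^h_{2k+1}(C_\infty\times C_d)$ depends on the chosen generator $t$, which $\eps_\#$ moves to $st$. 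Whether $B\circ\eps_*\circ\ol{B}=\id$ is exactly the content to be proven. The paper establishes this by a direct computation with quadratic formations, tracking Ranicki's explicit formula for $\ol{B}$ through the substitution $t\mapsto st$ and verifying that the resulting formation is equivalent to the original one in $L^p_{2k}(C_d)$. Your alternative remark that $\eps$ lifts to the infinite cyclic cover is suggestive (indeed the lift $\tilde\eps$ is properly homotopic to $\id_{\R\times L^n}$), but turning this into a proof requires identifying $\cS^h_\TOP(X_{d,q})$ with a proper structure set of $\R\times L^n$ in an $\eps$-equivariant way, which you have not done. Either carry out the algebraic verification with $\ol{B}$, or make the geometric cover argument precise; as written, the naturality claim is the whole content of~(2) and is left unjustified.
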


The $d=2$ case for Part~(2) was a key technical assertion of Jahren--Kwasik \cite[\S4]{JK}.

\begin{rem}\label{rem:Milnor_mistake}
Milnor \cite[1.6]{Milnor_hcob} falsely claimed that $SK_1(\Z G)=0$ for all finite abelian groups $G$; when $G=C_{p^2} \times C_{p^2}$ this $SK_1$-group is isomorphic to $(\Z/p)^{p-1}$  \cite[9.8(ii)]{Oliver_book}.
However it holds for all finite cyclic groups $G=C_n$ by Bass--Milnor--Serre \cite[XI:7.3]{Bass_book}.
Therefore the determinant map $K_1(\Z C_n) \longra (\Z C_n)^\times$ is an isomorphism.
By a theorem of Higman \cite[XI:7.1a]{Bass_book}, the torsion subgroup of $(\Z C_n)^\times$ is $\pm C_n$.
Hence $\Wh_1(C_n)$ is free abelian.
Consequently, the proof of \cite[Lemma~6.7]{Milnor_hcob} still holds in this case, so that the group-ring involution $(g \longmapsto g^{-1})$ induces the identity on the Whitehead group $\Wh_1(C_n)$.
\end{rem}

\begin{proof}[Proof of Lemma~\ref{lem:eps}(1)]
On the fundamental group $\pi_1(X_{d,q}) = C_\infty \times C_d$, recall that $\eps$ induces $(t^k,s^j) \longmapsto (t^k,s^{k+j})$; it is the identity on the subgroup $C_d$, which is generated by $s$.
Then, by Proposition~\ref{prop:Wh}(1), we obtain a commutative diagram whose rows are split exact:
\[\begin{tikzcd}
0 \rar & \Wh_1(C_d) \rar \dar{\id} & \Wh_1(\pi_1 X_{d,q}) \rar{\bdry} \dar{\eps_*} & \Wh_0(C_d) \rar \dar{\eps_*} & 0\\
0 \rar & \Wh_1(C_d) \rar & \Wh_1(\pi_1 X_{d,q}) \rar{\bdry} & \Wh_0(C_d) \rar & 0.
\end{tikzcd}\]
Here $R := \Z[C_d]$, and $\eps: R[t,t^{-1}] \longra R[t,t^{-1}]$ restricts to ring maps $\eps: R[t^{\pm 1}] \longra R[t^{\pm 1}]$.

Now, the splitting of the epimorphism $\bdry$ of Bass--Heller--Swan \cite[XII:7.4]{Bass_book} is
\[
h: \Wh_0(C_d) \longra \Wh_1(\pi_1 X_{d,q}) ~;~ [P] \longmapsto [t: P[t,t^{-1}] \to P[t,t^{-1}]].
\]
Here $P$ is a finitely generated projective $R$-module.
Then note
\[
\eps_*[P]
~=~ (\eps_* \circ \bdry \circ h)[P]
~=~ (\bdry \circ \eps_*) [t: P[t,t^{-1}] \to P[t,t^{-1}]].
\]
Since $\eps(t)=st$, and since $\eps(s)=s$ implies $(R \hookra R[t,t^{-1}] \xra{\eps} R[t,t^{-1}]) = (R \hookra R[t,t^{-1}])$,
\[
\eps_*[t: P[t,t^{-1}] \to P[t,t^{-1}]]
~=~ [s t: P[t,t^{-1}] \to P[t,t^{-1}]].
\]
Recall \cite[IX:6.3]{Bass_book} the map $\bdry$ in the localization sequence for $R[t] \to R[t,t^{-1}]$:
\[
\eps_*[P]
~=~ \bdry[s t: P[t,t^{-1}] \to P[t,t^{-1}]]
~=~ [\Cok(s t: P[t] \to P[t])]
~=~ [P].
\]
So $\eps_*=\id$ on $\Wh_0(C_d)$.
Moreover, in $\Wh_1(\pi_1 X_{d,q})$ note
\begin{eqnarray*}
\eps_*(h[P]) - h[P] &=& [s: P \to P] \in \Wh_1(C_d)\\
d \cdot [s: P \to P] &=& [s^d=1: P \to P] = 0.
\end{eqnarray*}
Thus, since $\Wh_1(C_d)$ is torsion-free by Remark~\ref{rem:Milnor_mistake}, we obtain
\[
\eps_* ~=~ \begin{pmatrix}\id & 0 \\ 0 & \id \end{pmatrix} ~\text{ on }~ \Wh_1(\pi_1 X_{d,q}) = \Wh_1(C_d) \oplus \Wh_0(C_d).
\]
Therefore $\eps$ induces the identity automorphism on $\Wh_1(\pi_1 X_{d,q})$.
\end{proof}

\begin{proof}[Proof of Lemma~\ref{lem:eps}(2)]
By Corollary~\ref{cor:h}, it suffices to show that $\eps_* = \id$ on $L^p_{2k}(C_d)$.
Its definition is $\eps_* := B \circ \eps_* \circ \ol{B}$, in terms of the induced automorphism $\eps_*: L^h_{2k+1}(C_\infty\times C_d) \longra L^h_{2k+1}(C_\infty\times C_d)$, the epimorphism $B: L^h_{2k+1}(C_\infty\times C_d) \longra L^p_{2k}(C_d)$, and its algebraic splitting $\ol{B}: L^p_{2k}(C_d) \longra L^h_{2k+1}(C_\infty\times C_d)$ of Ranicki \cite[Theorem~1.1]{Ranicki_AlgL2}.
Then, heavily using Ranicki's notation and slightly modifying his proof of splitness \cite[p.~134]{Ranicki_AlgL3}, note:
\begin{eqnarray*}
\eps_*[Q,\varphi] &=& (B \circ \eps_* \circ \ol{B}) [Q,\varphi]\\
&=& B\left[ (Q_t \oplus Q_t, \varphi \oplus -\varphi) ~\oplus~ \cH_\pm(-Q_t);\; \Delta_{(Q_t,\varphi)} ~\oplus~ -Q_t,\; \left(\begin{smallmatrix}1 & 0\\ 0 & st\end{smallmatrix}\right) \Delta_{(Q_t, \varphi)} ~\oplus~ -Q_t \right]\\
&=& \left[ B_1^+\left( \Delta_{(Q,\varphi)} \oplus \Delta^*_{(Q^*,\psi)}, \left(\begin{smallmatrix}1 & 0\\ 0 & st\end{smallmatrix}\right) (\Delta_{(Q,\varphi)} \oplus \Delta^*_{(Q^*,\psi)}) \right),\; \varphi \oplus -\varphi \right] ~\oplus~ [\cH_\pm(-Q)]\\
&=& \left[ B_1^+(Q \oplus Q, Q \oplus st Q),\; \varphi \oplus -\varphi \right] ~\oplus~ [\cH_\pm(-Q)]\\
&=& [Q,\varphi] ~\in~ L^p_{2k}(C_d).
\end{eqnarray*}
Here, the equivalence classes are of various quadratic forms and formations.
We only used that the $\Z[C_d]$-algebra map $\eps_\#: \Z[C_d][t,t^{-1}] \longra \Z[C_d][t,t^{-1}]$ is graded of degree 0.
\end{proof}

\begin{proof}[Proof of Lemma~\ref{lem:eps}(3)]
Observe $\eps_*$ respects the Ranicki--Rothenberg exact sequence
\[\begin{tikzcd}
\wh{H}^{n+3}(C_2;\Wh_1 X_{d,q}) \rar & \cS_\TOP^s(X_{d,q}) \rar & \cS_\TOP^h(X_{d,q}) \rar & \wh{H}^{n+2}(C_2;\Wh_1 X_{d,q}).
\end{tikzcd}\]
In particular, by Corollary~\ref{cor:h}, this restricts to an exact sequence
\begin{equation}\label{eqn:forget}
\begin{tikzcd}
0 \rar & H \rar & \cS_\TOP^s(X_{d,q}) \rar & K \rar & 0
\end{tikzcd}
\end{equation}
with $H$ finite abelian and $K$ free abelian.
By Lemma~\ref{lem:eps}(1,2), $\eps_* = \id$ on $H$ and $K$.
Hence
\[
\eps_*
~=~ \begin{pmatrix}\id_{H} & \nu\\ 0 & \id_{\iota K} \end{pmatrix}
~\text{ on }~ \cS_\TOP^s(X_{d,q}) = H \oplus \iota K,
\]
where $\nu: K \longra H$ is a component of $\eps_*$ and $\iota: K \longra \cS_\TOP^s(X_{d,q})$ is a choice of right-inverse of $\cS^s_\TOP(X_{d,q}) \longra K$.
Since $2H=0$, note $2 \nu = 0$.
Hence $\eps_*^2=\id$ on $\cS^s_\TOP(X_{d,q})$.
\end{proof}

We show that the homotopy-theoretic order of $\eps$ divides $2d^2$; see more in Proof~\ref{cor:hMod_lens}.

\begin{lem}\label{lem:eps_power}
The homeomorphism $\eps^{2d^2}$ is homotopic to the identity on $X_{d,q} = S^1 \times L^n$.
\end{lem}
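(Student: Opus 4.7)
The strategy is to lift $\eps^{2d^2}$ to the universal cover, exhibit an explicit $\pi_1$-equivariant null-homotopy there, and descend.

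First, since $\eps_*$ acts on $\pi_1 X_{d,q} = \langle t\rangle \times \langle s\rangle$ by $t \mapsto ts$ and $s \mapsto s$, the automorphism $\eps_*$ has order exactly $d$ in $\Aut(\pi_1)$; hence $\eps^{2d^2}$ induces the identity on $\pi_1$. Let $Y := \R \times S^{2k-1}$ be the universal cover, with deck group $\pi := C_\infty \times C_d$ acting by $t\cdot(r, u) = (r+1, u)$ and $s\cdot(r, u) = (r, \Phi_{d,q}u)$. The preferred lift is $\tilde\eps(r, u) := (r, A(2\pi r)u)$, and one computes
\[
\tilde\eps^{2d^2}(r, u) ~=~ (r, M(r)\, u), \qquad M(r) ~:=~ \mathrm{diag}\bigl(e^{4\pi i r d q},\, e^{4\pi i r d},\, \ldots,\, e^{4\pi i r d}\bigr) \in U(k).
\]
Since $d \mid 2d^2$, this lift is honestly $\pi$-equivariant (not merely equivariant up to an $\eps_*$-twist), and so any $\pi$-equivariant homotopy on $Y$ from $\tilde\eps^{2d^2}$ to $\id_Y$ descends to the desired homotopy on $X_{d,q}$.

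Second, the core technical step is to deform $\tilde\eps^{2d^2}$ through $\pi$-equivariant self-maps of $Y$ to $\id_Y$. Viewed as a loop $z \mapsto M(\tfrac{\log z}{2\pi i})$ in $U(k)$ parameterized by $z \in S^1$, the map $M$ lies entirely in the maximal torus $T^k \subset U(k)$ (all diagonal), with winding-number class $(2dq, 2d, \ldots, 2d) \in \pi_1(T^k) = \Z^k$. All components are divisible by $2d$; hence $M$ factors as the $2d$-fold iterate of the loop $z \mapsto A(\tfrac{\log z}{2\pi i})^{d^2}$, itself the $d$-th power of the $\pi$-equivariant lift $\tilde\eps^d$ of $\eps^d$. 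Since diagonal matrices commute with $\Phi_{d,q}$, the entire contraction can be organized within the centralizer of $\Phi_{d,q}$ in $U(k)$, namely $U(1) \times U(k-1)$.

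Third, the divisibility by $2d$ is precisely what kills the two relevant obstructions: (i) the parity obstruction in $\pi_1(SO(2k)) = \Z/2$ is annihilated by the factor of $2$, so that the total-winding class $2d(q+k-1)$ maps to $0$; (ii) the cyclic obstruction accounting for equivariance under the $C_d$-action on the fiber is annihilated by the factor of $d$, so that each component winding is a multiple of $d$ and the contraction in each $U(1)$-factor can be arranged $\Phi_{d,q}$-equivariantly.

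I expect the main obstacle to be verifying equivariance throughout the contraction: while $M$ and $\id_Y$ are both $\pi$-equivariant, most naive interpolations (such as $M_s(r) := \mathrm{diag}(e^{4\pi i r d q(1-s)}, \ldots)$) fail $\pi$-equivariance at intermediate $s$, since the continuous family of defect matrices $M_s(1) = \mathrm{diag}(e^{4\pi i d q(1-s)}, \ldots)$ only sporadically takes values in the discrete subgroup $\langle\Phi_{d,q}\rangle$. The correct homotopy must instead interpolate through a family supported in a contractible neighborhood of the identity in $S(U(1) \times U(k-1))$, absorbing the total winding by going $2d$ times around the loop $\tilde\eps^{d}$ (already $\pi$-equivariant) and then contracting using the vanishing of the image under the unstable $J$-homomorphism $\pi_1(U(k)) \to \pi_{2k}(S^{2k-1})$. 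This is exactly where the specific exponent $2d^2$, rather than any smaller multiple of $d$, is required.
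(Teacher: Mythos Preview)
Your setup is sound and in fact equivalent to the paper's: a $\pi$-equivariant homotopy on $\R\times S^{2k-1}$ is the same datum as a homotopy on $X_{d,q}$, and your observation that $\tilde\eps^{2d^2}$ is the $2d$-th iterate of the already $\pi$-equivariant loop $\tilde\eps^{d}$ amounts exactly to the paper's reduction to showing $2d\cdot[\eps^d]=0$ in $\pi_1(\Map(L^n),\id)$.

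The gap is in your third step. You assert that divisibility by $2d$ ``kills the two relevant obstructions,'' but you neither prove that these are the \emph{only} obstructions nor actually verify that they vanish.
\begin{itemize}
\item Your obstruction (i), together with the $J$-homomorphism remark, says only that the determinant winding $2d(q+k-1)$ is even and hence dies in $\pi_{2k}(S^{2k-1})=\Z/2$. At best this shows the loop is null-homotopic in $\Map(S^{2k-1})$, i.e.\ non-equivariantly; it says nothing about $\Map(L^n)$.
\item Your obstruction (ii) is not made precise. The sentence ``the contraction in each $U(1)$-factor can be arranged $\Phi_{d,q}$-equivariantly'' is vacuous: diagonal matrices already commute with $\Phi_{d,q}$, but the loop cannot be contracted within the maximal torus at all, since its winding vector $(2dq,2d,\ldots,2d)$ is nonzero. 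Once you leave the torus you need a genuine obstruction-theoretic argument on $L^n$, and you have not supplied one. The phrase ``contractible neighborhood of the identity in $S(U(1)\times U(k-1))$'' does not describe such an argument.
\end{itemize}

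The paper closes exactly this gap by invoking Hsiang--Jahren \cite[Proposition~3.1]{HJ}, who use obstruction theory over the Postnikov tower of $L^n$ to show that each basic loop $[\rho_j]\in\pi_1(\Map(L^n),\id)$ has order $2d$. The paper then writes $[\eps^d]=[\rho_1]^q*[\rho_2]*\cdots*[\rho_k]$ (via the Eckmann--Hilton coincidence of the $S^1$- and $\Diff(L^n)$-multiplications) and concludes $[\eps^{2d^2}]=[\eps^d]^{2d}=1$. Your proposal is heading toward this computation, but without the Hsiang--Jahren input---or an explicit replacement of comparable strength---it does not close.
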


\begin{proof}
Observe that the $d$-th power of $\eps$ induces the identity on fundamental group:
\[
\eps^d ~:~ S^1 \times L^n \longra S^1 \times L^n ~;~ (z,[u_1: u_2: \ldots : u_k]) \longmapsto (z,[z^q u_1: z u_2: \ldots : z u_k]).
\]
Each $1 \leqslant j \leqslant k$ has an isotopy of diffeomorphisms that lifts the generator of $\pi_1(SO_3) = C_2$:
\begin{equation}\label{eqn:rhos}
\rho_j ~:~ S^1 \times L^n \longra S^1 \times L^n ~;~ (z,[u_1: \ldots : u_j: \ldots: u_k]) \longmapsto (z,[u_1: \ldots : z u_j: \ldots : u_k]).
\end{equation}
In the proof of \cite[Proposition 3.1]{HJ}, Hsiang--Jahren showed that each homotopy class $[\rho_j]$ has order $2d$ in the group $\pi_1(\Map\,L^n,\id)$.
Since $S^1$ is a co-$H$-space and $\Diff\,L^n$ is an $H$-space, the two multiplications on $\pi_1(\Diff\,L^n,\id)$ are equal (and abelian), so
\begin{equation}\label{eqn:eps_power}
[\eps^d] ~=~ [\rho_1^q \circ \rho_2 \circ \cdots \circ \rho_k] ~=~ [\rho_1]^q * [\rho_2] * \cdots * [\rho_k] ~\in~ \pi_1(\Diff\,L^n,\id).
\end{equation}
Therefore $[\eps^{2d^2}] = [\eps^d]^{2d} = [\rho_1]^{2dq} [\rho_2]^{2d} \cdots [\rho_k]^{2d} = 1$ in $\pi_1(\Map\,L^n,\id)$.
\end{proof}

Structure sets quantify homeomorphism types within a homotopy type, so we can start:

\begin{proof}[Proof of Proposition~\ref{prop:transitive}]
Consider the homotopy equivalence $\alpha := \ol{f} \circ \eps^2 \circ f: M \longra M$, where $\ol{f}$ denotes a homotopy inverse for $f$.
By the composition formula for Whitehead torsion \cite[Lemma~7.8]{Milnor_hcob}, by topological invariance \cite{Chapman}, and by Lemma~\ref{lem:eps}(1),
\[
\tau(\alpha) ~=~ \tau(\ol{f}) + \ol{f}_* (\tau(\eps^2) + \eps^2_* \tau(f))
~=~ -f^{-1}_*\tau(f) + f^{-1}_* (0 + \tau(f)) ~=~ 0 ~\in~ \Wh_1(\pi_1 M).
\]
That is, $\alpha$ is a simple homotopy equivalence, hence it defines an element $[M,\alpha] \in \cS^s_\TOP(M)$.

On the other hand, by Lemma~\ref{lem:eps}(3) and Lemma~\ref{lem:eps_power}, note
\begin{eqnarray*}
\alpha_* &=~ \ol{f}_* \circ \eps^2_* \circ f_* ~=~ \ol{f}_* \circ \id \circ f_* &=~ \id : \cS^s_\TOP(M) \longra \cS^s_\TOP(M)\\
\alpha^{d^2} &\simeq~ \ol{f} \circ \eps^{2d^2} \circ f ~\simeq~ \ol{f} \circ \id \circ f &\simeq~ \id : M \longra M.
\end{eqnarray*}
Then, by Ranicki's composition formula for simple structure groups \cite{Ranicki_StructuresComposition}, note
\[
d^2 [M,\alpha] ~=~ \sum_{j=0}^{d^2-1} [M,\alpha] ~=~ \sum_{j=0}^{d^2-1} (\alpha_*)^j [M,\alpha] ~=~ [M,\alpha^{d^2}] ~=~ [M,\id] ~=~ 0 ~\in~ \cS^s_\TOP(M).
\]
By Equation~\eqref{eqn:forget} and Corollary~\ref{cor:h}, $\cS^s_\TOP(M) \iso \cS^s_\TOP(X_{d,q})$ is a sum of copies of $\Z/2$ and $\Z$.
So, since $d$ is odd, we must have $[M,\alpha]=0$.
That is, $\alpha$ is $s$-bordant to the identity.
Therefore, by the $s$-cobordism theorem, $\alpha$ is homotopic to a self-homeomorphism $\delta$.
\end{proof}

\section{Classification of homeomorphism types}\label{sec:homeomorphism}

We resume with the calculation of the isotropy subgroups $SI(M)$ from Proposition~\ref{prop:hs}.
Understood in the context of an abelian group $A$ with involution ${}^*$, we consider subgroups
\begin{eqnarray*}
(-1)^n\text{-symmetrics} &:=& \{ a \in A \mid a = (-1)^n a^* \}\\
(-1)^n\text{-evens} &:=& \{ b + (-1)^n b^* \mid b \in A \}.
\end{eqnarray*}
Furthermore, for use later, we abbreviate symmetrics $:= (+1)\text{-symmetrics}$ and evens $:= (+1)\text{-evens}$ and skew-symmetrics $:= (-1)\text{-symmetrics}$ and skew-evens $:= (-1)\text{-evens}$.

\begin{prop}\label{prop:SI}
Let $M$ be a closed connected topological manifold of dimension $n \geqslant 4$.
If $n=4$ then assume $\pi_1 M$ is good in the sense of Freedman--Quinn \cite{FQ_book}.
\begin{enumerate}
\item
With respect to the standard involution on $\Wh_1(\pi_1 M)$ given by $(g \longmapsto g^{-1})$,
\[
(-1)^n\text{-evens} ~\leqslant~  SI(M) ~\leqslant~ (-1)^n\text{-symmetrics}.
\]
Hence $SI(M)/(-1)^n\text{-evens} \leqslant \wh{H}^n(C_2;\Wh_1(\pi_1 M))$, which is a sum of copies of $\Z/2$.

\item
This quotient is expressible in structure groups (add by stacking in the $I$-coordinate):
\[\begin{CD}
\Cok\left( \cS_\TOP^s(M \times I) \to \cS_\TOP^h(M \times I) \right) @>{~\tors~}>{\iso}> \displaystyle\frac{SI(M)}{(-1)^n\text{-evens}}.
\end{CD}\]
\end{enumerate}
\end{prop}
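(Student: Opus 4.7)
The plan is to deduce both parts from Poincar\'e duality for $h$-cobordisms, then bridge the geometric description of $SI(M)$ with structure-set calculations via the Ranicki--Rothenberg sequence.

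For part~(1), I first show $SI(M) \leqslant (-1)^n\text{-symmetrics}$. Let $(W;M,M')$ be a strongly inertial $h$-cobordism, with deformation retraction $r: W \thra M$, inclusions $i: M \hookra W$ and $j: M' \hookra W$, and set $\alpha := \tau(r)$. The identity $r \circ i = \id_M$ forces $\tau(i) = -\alpha$. By strong inertia, $r \circ j$ is homotopic to a homeomorphism and hence simple, so the composition formula for Whitehead torsion \cite[Lemma~7.8]{Milnor_hcob} gives $0 = \tau(r \circ j) = \alpha + \tau(j)$, after identifying all Whitehead groups through the common $\pi_1$. The Poincar\'e duality relation
\[
\tau(M' \hookra W) ~=~ (-1)^n \,\overline{\tau(M \hookra W)}
\]
then yields $-\alpha = -(-1)^n \alpha^*$, i.e., $\alpha = (-1)^n \alpha^*$.

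For the reverse inclusion, given $\beta \in \Wh_1(\pi_1 M)$, apply Wall realization (valid for $n \geqslant 4$, using \cite{FQ_book} when $n=4$) to obtain an $h$-cobordism $(W_1;M,M_1)$ with $\tau(W_1 \thra M) = \beta$. Form the mirror double $W := W_1 \cup_{M_1} W_1^{\mathrm{op}}$, an $h$-cobordism on $M$. The same duality applied to $W_1^{\mathrm{op}}$ gives $\tau(W_1^{\mathrm{op}} \thra M_1) = (-1)^n \beta^*$, and stacking the two retractions produces $\tau(W \thra M) = \beta + (-1)^n \beta^*$. The round-trip composition through $W$ from the ``other'' $M$ back to itself factors as $g_1 \circ g_1^{-1} \simeq \id$, where $g_1: M_1 \to M$ is the homotopy equivalence through $W_1$, so $W$ is strongly inertial. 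Hence $(-1)^n$-evens $\leqslant SI(M)$. The final sentence of part~(1) follows from the standard identification of $\wh{H}^n(C_2; A)$ with $(-1)^n$-symmetrics modulo $(-1)^n$-evens.

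For part~(2), I would construct a homomorphism
\[
\Theta: SI(M) \longra \Cok\bigl(\cS^s_\TOP(M \times I) \longra \cS^h_\TOP(M \times I)\bigr)
\]
sending $\alpha$ to the class of $(W,f)$, where $W$ is a strongly inertial realization of $\alpha$ and $f: W \to M \times I$ extends the boundary identification ($\id_M$ on the bottom and a homeomorphism $h: M' \cong M$ on the top). Well-definedness up to $h$-bordism and $\cS^s$-adjustments is checked via the group structure on $\cS^h_\TOP(M \times I)$ from \cite{Ranicki_TopMan}. The image lies in $\tors\,\Cok$ because $\Cok(\cS^s \to \cS^h)$ embeds into $\wh{H}^\bullet(C_2; \Wh_1(\pi_1 M))$ via Ranicki--Rothenberg, which is $2$-torsion.

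The hard part will be pinning down $\ker\Theta$ exactly as the evens. One containment is geometric: a mirror double $W_1 \cup W_1^{\mathrm{op}}$ bounds an explicit $s$-bordism to $(M \times I,\id)$ by collapsing the identified middle $M_1$, placing $(-1)^n$-evens in $\ker\Theta$. The reverse requires identifying $\Theta$, under the Ranicki--Rothenberg connecting map $\cS^h_\TOP(M \times I) \to \wh{H}^\bullet(C_2; \Wh_1)$, with the quotient $SI(M) \thra SI(M)/(-1)^n\text{-evens}$ supplied by part~(1); naturality of Ranicki--Rothenberg handles this at the cost of careful sign tracking. Surjectivity onto $\tors\,\Cok$ then follows: any $[N,g] \in \cS^h_\TOP(M \times I)$ forces $N$ to be an $h$-cobordism on $M$ whose torsion lies in $SI(M)$, with strong inertia coming from the composition $M' \hookra N \to M \times I \to M$, giving a preimage under $\Theta$.
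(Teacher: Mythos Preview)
Your argument for Part~(1) is essentially the paper's: composition formula plus Milnor duality for the upper bound, and the untwisted double $W_1 \cup_{M_1} W_1^{\mathrm{op}}$ for the lower bound.

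For Part~(2) you and the paper construct mutually inverse maps.  The paper defines
\[
\tors: \cS^h_\TOP(M\times I) \longra SI(M)/(-1)^n\text{-evens}, \qquad [f] \longmapsto [\tau(f)],
\]
and checks directly, by a Whitehead-torsion calculation parallel to Part~(1), that an $h$-bordism between $f$ and $f'$ forces $\tau(f)-\tau(f')$ to be $(-1)^n$-even.  Surjectivity is then exactly your realization step, and $\ker(\tors)=\im(\cS^s)$ is immediate.  Going in this direction buys you a cleaner well-definedness check: it is a single torsion identity rather than a verification that the choices of $h:M'\to M$ and of homotopy to the retraction only alter $[W,f]$ by an element of $\cS^s_\TOP(M\times I)$.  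That verification is true, but you glossed it.

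The one step in your write-up that is not yet a proof is ``collapsing the identified middle $M_1$'' to produce an $s$-bordism from the double to $(M\times I,\id)$.  Collapsing is not a cobordism.  What actually works is $V:=W_1\times I$, whose boundary (after straightening corners) decomposes rel $M\times\partial I$ as $\big(W_1\cup_{M_1}(M_1\times I)\cup_{M_1}W_1^{\mathrm{op}}\big)\;\sqcup\;(M\times I)$, i.e.\ as $W\sqcup (M\times I)$; and the inclusion $M\times I\hookra W_1\times I$ has torsion $\chi(I)\cdot\tau(M\hookra W_1)=0$ by the product formula, so $V$ is an $s$-bordism.  Once you say this, your direction is complete; alternatively, as you note, the Ranicki--Rothenberg map $\cS^h\to\wh{H}^n(C_2;\Wh_1)$ sends $[W,f]\mapsto[\tau(f)]=[\beta+(-1)^n\beta^*]=0$, so $[W,f]\in\im(\cS^s)$ by exactness --- but then you may as well have defined $\tors$ in the first place, as the paper does.
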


This quantification generalizes a specific argument given by Jahren--Kwasik \cite[\S7]{JK_hcob}.
Our structure sets are `$\rel \bdry$' (homeomorphism on the unspecified boundary \cite[\S0]{Wall_book}).

\begin{proof}[Proof of Proposition~\ref{prop:SI}(1)]
Let $\alpha \in SI(M)$.
There is a strongly inertial $h$-cobordism $(W;M,M')$ such that $\alpha = \tau(W \thra M)$.
By the composition formula \cite[Lemma~7.8]{Milnor_hcob},
\[
0 ~=~ \tau(\id_M) ~=~ \tau(M \hookra W \thra M)
~=~ \tau(W \thra M) + (W \thra M)_* \tau(M \hookra W). 
\]
Next, by Milnor duality \cite[\S10]{Milnor_hcob}, note
\[
\tau(M' \hookra W) ~=~ (-1)^n \tau(M \hookra W)^*.
\]
Finally, since the $h$-cobordism is strongly inertial, by Chapman's topological invariance of Whitehead torsion \cite{Chapman}, by the composition formula again, and by substitution, note
\begin{eqnarray*}
0 &=& \tau(M' \hookra W \thra M) ~=~ \tau(W \thra M) + (W \thra M)_* \tau(M' \hookra W)\\
&=& \alpha + (-1)^n (W \thra M)_* \tau(M \hookra W)^* ~=~ \alpha - (-1)^n \alpha^*.
\end{eqnarray*}
Thus $SI(M) \leqslant (-1)^n\text{-symmetrics}$ in $\Wh_1(\pi_1 M)$.

Let $\beta \in \Wh_1(\pi_1 M)$.
There exists an $h$-cobordism $(W';M,M'')$ with $\beta = \tau(W' \thra M)$.
Consider the \emph{untwisted double} $W := W' \cup_{M''} -W'$.
For avoid confusion, we denote $\bdry W =: -M_0 \sqcup M_1$ with the canonical homeomorphisms $M_i \approx M$ understood.
Note $(W;M_0,M_1)$ is a strongly inertial $h$-cobordism, since $(W' \thra M'' \hookra W')$ is homotopic to the identity:
\begin{eqnarray*}
(M_1 \hookra W \thra M_0) &=& (M_1 \hookra -W' \thra M'' \hookra W' \thra M_0)\\
&\simeq& (M_1 \hookra -W' \stackrel{flip}{\approx} W' \thra M_0)\\
&\simeq& (M_1 \stackrel{\id}{\approx} M_0).
\end{eqnarray*}
Using the above techniques, this doubled $h$-cobordism has Whitehead torsion
\begin{eqnarray*}
\tau(W \thra M_0) &=& \tau(W \thra W' \thra M_0)\\
&=& \tau(W' \thra M_0) + (W' \thra M_0)_* \tau(W \thra W')\\
&=& \beta + (M'' \hookra W' \thra M_0)_* \tau(-W' \thra M'')\\
&=& \beta + (-1)^n \tau(-W' \thra M_1)^*\\
&=& \beta + (-1)^n \beta^*.
\end{eqnarray*}
In the third step, we could excise $\mathring{W'}$ since $W \thra W'$ is the identity on $W'$, whose mapping cone is consists of elementary expansions.
Thus $SI(M) \geqslant (-1)^n\text{-evens}$ in $\Wh_1(\pi_1 M)$.
\end{proof}

\begin{proof}[Proof of Proposition~\ref{prop:SI}(2)]
Let $f: (W;M_0,M_1) \longra M \times (I;0,1)$ be a homotopy equivalence of manifold triads such that the restriction $\bdry f: \bdry W \longra M \times \bdry I$ is a homeomorphism.
Since $f: W \to M \times I$ represents the retraction $W \thra M_0$, the $h$-cobordism $(W;M_0,M_1)$ is strongly inertial.
Then, assuming the identification $\bdry_0 f: M_0 \longra M$, we have
\[
\tau(f) ~=~ \tau(W \thra M_0) ~\in~ SI(M). 
\]

Now suppose $F: (V;W,W') \longra M \times I \times (I;0,1)$ is an $h$-bordism, relative to $M \times \bdry I \times I$, from $f$ to another such homotopy equivalence $f': (W'; M_0',M_1') \longra M \times (I;0,1)$ of triads.
By the composition formula \cite[Lemma~7.8]{Milnor_hcob}, note:
\begin{eqnarray*}
\tau(M_0 \hookra W \hookra  V) &=& \tau(W \hookra V) + (W \hookra V)_* \tau(M_0 \hookra W)\\
\tau(M_0' \hookra W' \hookra V) &=& \tau(W' \hookra V) + (W' \hookra V)_* \tau(M_0' \hookra W').
\end{eqnarray*}
As above, $\tau(f') = \tau(W' \thra M_0')$.
Since $\tau(\id_M)=0$, by \cite[Lemma~7.8]{Milnor_hcob} again, note:
\begin{eqnarray*}
\tau(M_0 \hookra W) &=& -(M_0 \hookra W)_* \tau(f)\\
\tau(M_0' \hookra W') &=& -(M_0' \hookra W')_* \tau(f').
\end{eqnarray*}
By Milnor duality \cite[\S10]{Milnor_hcob}, note
\begin{eqnarray*}
\tau(W' \hookra V) &=& (-1)^{n+1} \tau(W \hookra V)^*.
\end{eqnarray*}
Then, since $M_0 \approx M_0'$ and since $(M_0 \hookra W \hookra V)$ is homotopic to $(M_0' \hookra W \hookra V)$, note:
\begin{eqnarray*}
\tau(W \hookra V) - (M_0 \hookra V)_* \tau(f) &=& (-1)^{n+1}\tau(W \hookra V)^* - (M_0' \hookra V)_* \tau(f')\\
\tau(f) - \tau(f') &=& (M_0 \hookra V)_*^{-1} (1+(-1)^n *) \tau(W \hookra V).
\end{eqnarray*}
Thus we obtain a well-defined homomorphism of abelian groups, where addition in this relative structure set is given by stacking homotopy equivalences in the $I$-coordinate:
\[\begin{CD}
\cS_\TOP^h(M \times I) @>{~\tors~}>> \displaystyle\frac{SI(M)}{(-1)^n\text{-evens}} @. ~;~ [f] \longmapsto [\tau(f)].
\end{CD}\]

Let $\alpha \in SI(M)$.
There is an $h$-cobordism $(W;M,M')$ with torsion $\tau(W \thra M)=\alpha$ such that $(M' \hookra W \thra M)$ is homotopic to a homeomorphism.
First mapping $W \thra M \times \{\frac{1}{2}\}$, and then applying the Homotopy Extension Property with regard to a choice of above homotopy to a homeomorphism $M' \to M$ and a choice of homotopy of $(M \hookra W \thra M)$ to the identity on $M$, we obtain a homotopy equivalence $f: (W;M,M') \longra M \times (I;0,1)$ such that $\bdry f: \bdry W \longra M \times \bdry I$ is the prescribed homeomorphism and $f: W \longra M \times I$ represents $W \thra M$.
Then $[f] \in \cS_\TOP^h(M \times I)$ and $\tau(f) = \alpha$.
Therefore $\tors$ is surjective.

Finally, $\tors[f]=0$ if and only if $f: W \longra M \times I$ is $h$-bordant (as made in Proof~\ref{prop:SI}(1)) to a simple homotopy equivalence.
Thus the kernel of $\tors$ is the image of $\cS_\TOP^s(M \times I)$.
\end{proof}

The homotopy invariance of the subgroup $SI(X) \leqslant \Wh_1(\pi_1 X)$ is now a corollary.

\begin{proof}[Proof of Theorem~\ref{thm:SI}]
The function $\tors$ is a homomorphism with respect to Ranicki's abelian group structure on the structure sets.
This follows from the commutative diagram with exact rows (using Proposition~\ref{prop:SI} and \cite[Theorem~18.5]{Ranicki_TopMan}):
\[\begin{tikzcd}
\cS_\TOP^s(X \times I) \rar \dar{\approx}[swap]{s} & \cS_\TOP^h(X \times I) \rar[two heads]{\tors} \dar{\approx}[swap]{s} & SI(X) / (-1)^n\text{-evens} \dar[tail]\\
\cS_{n+2}^s(X \times I) \rar & \cS_{n+2}^h(X \times I) \rar{\tors} & \wh{H}^n(C_2; \Wh_1(\pi_1 X)).\\
L_{n+2}^s(X \times I) \rar \uar{\bdry} & L_{n+2}^h(X \times I) \uar{\bdry} \rar{\tors} & \wh{H}^{n+2}(C_2; \Wh_1(\pi_1 X)) \uar{}[swap]{\iso}
\end{tikzcd}\]
The bottom two squares consist of \emph{homotopy-invariant} functors from the category of spaces to the category of abelian groups; that is, if continuous functions of spaces are homotopic, then these functors induce equal homomorphisms of abelian groups.

Consider the homotopy class of any continuous function $f: M \longra X$, which induces a homomorphism $f_*: \Wh_1(\pi_1 M) \longra \Wh_1(\pi_1 X)$.
By the functoriality of the upper-right corner of the diagram, the induced map $f_*: \wh{H}^n(C_2; \Wh_1(\pi_1 M)) \longra \wh{H}^n(C_2; \Wh_1(\pi_1 X))$ restricts to a map $f_*: SI(M)/(-1)^n\text{-evens} \longra SI(X)/(-1)^n\text{-evens}$ of subgroups.
Hence the induced map $f_*: \Wh_1(\pi_1 M) \longra \Wh_1(\pi_1 X)$ restricts to a map $f_*: SI(M) \longra SI(X)$.
If $f$ is a homotopy equivalence, then all of these induced maps are isomorphisms.
\end{proof}

The following proposition is not original; it is merely a record.
Recall $X_{d,q} = S^1 \times L_{d,q}^{2k-1}$.

\begin{prop}\label{prop:Wh}
Let $d > 1$ be a square-free odd integer.
Select an integer $q$ coprime to $d$.
\begin{enumerate}
\item 
There is a canonical identification
\[
\Wh_1(\pi_1 X_{d,q}) ~=~ \Wh_1(C_d) ~\oplus~ \Wh_0(C_d).
\]

\item
The standard involution $(g \longmapsto g^{-1})$ on $\Wh_1(\pi_1 X_{d,q})$ restricts to the standard involution on $\Wh_1(C_d)$ and to negative the standard involution on $\Wh_0(C_d)$.

\item
Furthermore, with respect to these restricted involutions:
\[
\frac{\Wh_1(C_d)}{\text{symmetrics}} ~=~ 0 \qquad\text{and}\qquad
\frac{\Wh_0(C_d)}{\text{skew-evens}} ~=~ H_0(C_2;\Wh_0(C_d)).
\]
\end{enumerate}
\end{prop}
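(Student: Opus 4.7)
The plan is to prove the three parts in sequence, using Bass--Heller--Swan theory for Laurent polynomial rings, a trace of the duality involution through that splitting, and the Bass--Milnor--Serre result recalled in Remark~\ref{rem:Milnor_mistake}.

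For Part~(1), apply the Bass--Heller--Swan decomposition
\[
K_1(R[t,t^{-1}]) ~\cong~ K_1(R) ~\oplus~ K_0(R) ~\oplus~ NK_1(R) ~\oplus~ NK_1(R)
\]
with $R = \Z C_d$, so that $R[t,t^{-1}] = \Z[\pi_1 X_{d,q}]$. Because $d$ is square-free, the Nil groups $NK_1(\Z C_d)$ vanish by a classical $K$-theoretic computation, as contrasted with the non-square-free discussion at the end of Section~\ref{sec:homotopy}. Reducing modulo $\pm(C_\infty \times C_d)$ and the image of $K_0(\Z)$ then yields the asserted splitting, with the $\Wh_0(C_d)$ summand embedded as $[P] \longmapsto [t \cdot \id_{P[t,t^{-1}]}]$.

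For Part~(2), trace the standard anti-involution $t \mapsto t^{-1}$, $s \mapsto s^{-1}$ of $\Z[\pi_1 X_{d,q}]$ through this splitting. On the $\Wh_1(C_d)$ summand, the action is induced directly from $s \mapsto s^{-1}$, which is the standard involution on $\Wh_1(C_d)$. On the $\Wh_0(C_d)$ summand, the embedded representative $[t \cdot \id_{P[t,t^{-1}]}]$ dualizes to $[t^{-1} \cdot \id_{P^*[t,t^{-1}]}]$; using the identity $[t^{-1}] = -[t]$ in $K_1$, this class equals $-[t \cdot \id_{P^*[t,t^{-1}]}]$, corresponding to $-[P^*] = -\iota([P])$ in $\Wh_0(C_d)$. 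Hence the restricted involution is negative the standard involution on $\Wh_0(C_d)$.

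For Part~(3), the first identity follows from Remark~\ref{rem:Milnor_mistake}: the combination of Bass--Milnor--Serre with Higman gives $\Wh_1(C_d)$ free abelian with trivial standard involution, so every element is symmetric and the quotient vanishes. For the second identity, unwind Part~(2) to express the skew-evens of $\Wh_0(C_d)$ via the restricted involution, then match the resulting subgroup with the subgroup $(1-\iota)\Wh_0(C_d)$ that defines the coinvariants $H_0(C_2; \Wh_0(C_d))$. The main obstacle is careful sign bookkeeping, since the duality involution on $K_1$ forces a sign with respect to the natural involution on $K_0 = \Wh_0(C_d)$, and reconciling this sign is what ties the Whitehead-theoretic skew-even subgroup to the coinvariant-defining subgroup $(1-\iota)\Wh_0(C_d)$.
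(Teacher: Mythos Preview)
Your proposal is correct and follows essentially the same approach as the paper: Bass--Heller--Swan with Nil-vanishing for Part~(1), tracing the involution through the splitting for Part~(2) (the paper simply cites Ranicki's analysis rather than computing explicitly as you do), and Remark~\ref{rem:Milnor_mistake} together with the definition of coinvariants for Part~(3). The paper's treatment of the second identity in Part~(3) is even terser than yours---it just says the assertion is the definition of $H_0(C_2;\Wh_0(C_d))$---so your attention to the sign bookkeeping is appropriate.
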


\begin{proof}
Part~(1) is the fundamental theorem of algebraic $K$-theory \cite[XII:7.3, 7.4b]{Bass_book} combined with the vanishing of $NK_1(Z[C_d])$ for $d$ square-free \cite{Harmon}.

Part~(2) is the analysis of the restriction of the overall involution done in \cite[p21]{Ranicki_AlgL3}.

For Part~(3), by Remark~\ref{rem:Milnor_mistake}, the group-ring involution $(g \longmapsto g^{-1})$ on $\Z[C_d]$ induces the identity on $\Wh_1(C_d)$.
Therefore $\Wh_1(C_d)/\text{symmetrics} = 0$.
The assertion about $\Wh_0(C_d)$ is simply the definition of $H_0(C_2;\Wh_0(C_d))$.
\end{proof}

\begin{cor}\label{cor:SI}
Let $d > 1$ be square-free odd, select an integer $q$ coprime to $d$, let $k>1$.
Let $M$ be any closed topological manifold in the homotopy type of $X_{d,q}$.
We can identify
\[
\frac{\Wh_1(\pi_1 X_{d,q})}{SI(M)} ~=~ H_0(C_2;\Wh_0(C_d)).
\]
\end{cor}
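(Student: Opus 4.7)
By Theorem~\ref{thm:SI}, the isomorphism $\Wh_1(\pi_1 M) \iso \Wh_1(\pi_1 X_{d,q})$ carries $SI(M)$ onto $SI(X_{d,q})$, so it suffices to compute $\Wh_1(\pi_1 X)/SI(X)$ for $X := X_{d,q}$. By Proposition~\ref{prop:Wh}, $\Wh_1(\pi_1 X) = \Wh_1(C_d) \oplus \Wh_0(C_d)$, with the standard involution restricting to the identity on $\Wh_1(C_d)$ (Remark~\ref{rem:Milnor_mistake}) and to $-\iota$ on $\Wh_0(C_d)$. Since $\dim X = 2k$ is even, Proposition~\ref{prop:SI}(1) gives
\[
2\Wh_1(C_d) \oplus (1-\iota)\Wh_0(C_d) ~\leqslant~ SI(X) ~\leqslant~ \Wh_1(C_d) \oplus \Ker(1+\iota),
\]
and the ambient quotient is $\Wh_1(\pi_1 X)/\text{evens} = \Wh_1(C_d)/2 \oplus H_0(C_2;\Wh_0(C_d))$. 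Hence the target identification is equivalent to showing $SI(X)/\text{evens} = \Wh_1(C_d)/2 \oplus 0$ inside $\wh{H}^{2k}(C_2; \Wh_1(\pi_1 X))$, equivalently $SI(X) = \Wh_1(C_d) \oplus (1-\iota)\Wh_0(C_d)$.

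By Proposition~\ref{prop:SI}(2), the quotient $SI(X)/\text{evens}$ is the torsion part of $\Cok\bigl(\cS^s_\TOP(X \times I) \to \cS^h_\TOP(X \times I)\bigr)$. Applying the Ranicki--Shaneson decomposition (Proposition~\ref{prop:Shaneson} and its $\cS^s$-analog under decoration shift) to $X = S^1 \times L$ with $L := L^{2k-1}_{d,q}$ produces compatible splittings
\begin{align*}
\cS^s_\TOP(X \times I) &~=~ \cS^s_{2k+2}(L) ~\oplus~ \cS^h_{2k+1}(L),\\
\cS^h_\TOP(X \times I) &~=~ \cS^h_{2k+2}(L) ~\oplus~ \cS^p_{2k+1}(L).
\end{align*}
For the second summand, Lemma~\ref{lem:crossI} gives $\cS^h_{2k+1}(L) = 0$, and the $p$-algebraic surgery exact sequence combined with $L^p_{2k+1}(C_d) = 0$ (Bak) and the split injectivity of $\sigma^p_{2k}$ established in the proof of Lemma~\ref{lem:projective} yields $\cS^p_{2k+1}(L) = 0$, so this summand contributes nothing. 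For the first summand, the Ranicki--Rothenberg exact sequence combined with $\cS^s_{2k+1}(L) = 0$ (Lemma~\ref{lem:crossI}) and the free abelianness and trivial involution on $\Wh_1(C_d)$ (Remark~\ref{rem:Milnor_mistake}) gives
\[
\Cok\bigl(\cS^s_{2k+2}(L) \to \cS^h_{2k+2}(L)\bigr) ~\iso~ \wh{H}^{2k+2}(C_2; \Wh_1(C_d)) ~=~ \Wh_1(C_d)/2.
\]

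Assembling these cokernels under the Bass--Heller--Swan identification of $\wh{H}^{2k+2}(C_2; \Wh_1(\pi_1 X)) = \wh{H}^{2k}(C_2; \Wh_1(\pi_1 X))$ yields $SI(X)/\text{evens} = \Wh_1(C_d)/2 \oplus 0$, whence $\Wh_1(\pi_1 X)/SI(X) = H_0(C_2;\Wh_0(C_d))$. The main obstacle is bookkeeping of Tate cohomology degrees in this assembly: the two Shaneson-split cokernels naturally live in $\wh{H}^{2k+2}(C_2;\Wh_1(C_d))$ and in $\wh{H}^{2k+1}(C_2;\Wh_0(C_d))$ with standard involution $\iota$ respectively, and matching these with the two factors of $\wh{H}^{2k}(C_2;\Wh_1(\pi_1 X))$ under Bass--Heller--Swan requires the degree shift induced by the negated involution $-\iota$ acting on the $\Wh_0(C_d)$-summand of $\Wh_1(\pi_1 X)$.
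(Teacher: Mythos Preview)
Your proof is correct and reaches the same conclusion as the paper, namely $SI(X_{d,q})/\text{evens} \iso \wh{H}^{2k+2}(C_2;\Wh_1(C_d))$, but the route is organized differently. The paper works directly with the surgery exact sequence for $X_{d,q}\times I$: it computes the relevant $L$-groups of $C_\infty\times C_d$ via Shaneson (using Bak and Bak--Kolster for the odd vanishing), sets up a Nine Lemma diagram comparing the $s$- and $h$-decorated sequences, verifies exactness of the outer rows and columns, and reads off $\cS^h/\cS^s \iso \wh{H}^{2k+2}(C_2;\Wh_1(C_d))$ from the bottom row. You instead apply the Shaneson splitting to the \emph{structure} groups $\cS^{s,h}_{2k+2}(S^1\times L)$ first, kill the second summand outright using $\cS^h_{2k+1}(L)=\cS^p_{2k+1}(L)=0$, and then read off the cokernel on the first summand from the Ranicki--Rothenberg sequence for $L$ together with $\cS^s_{2k+1}(L)=0$.

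Both arguments rest on the same three inputs (Lemma~\ref{lem:crossI}, the odd $L$-group vanishing, and Remark~\ref{rem:Milnor_mistake}); yours is arguably more direct, while the paper's Nine Lemma packaging avoids the compatibility check you flag at the end. That compatibility---that the Shaneson splitting of $\cS^h_{2k+2}(S^1\times L)$ matches the Bass--Heller--Swan splitting of $\wh{H}^{2k}(C_2;\Wh_1(\pi_1 X))$ under $\tors$---is genuine but routine: it follows from naturality of the Ranicki--Rothenberg sequence under the inclusion $L\hookrightarrow S^1\times L$ (which induces $\Wh_1(C_d)\hookrightarrow\Wh_1(\pi_1 X)$), and in your situation it is moot anyway since the $\cS^p_{2k+1}(L)$ summand already vanishes. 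One small attribution point: the vanishing $L^p_{2k+1}(C_d)=0$ you invoke is due to Bak--Kolster rather than Bak alone, as the paper notes.
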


\begin{proof}
By Theorem~\ref{thm:SI}, we have $SI(M) = SI(X_{d,q})$ as subgroups of $\Wh_1(\pi_1 X_{d,q})$.

The surgery exact sequence for $X_{d,q} \times I \;\rel \bdry$ admits forgetful maps of decorations.
Consider the commutative diagram with exact rows, which we write schematically:
\begin{equation}\label{eqn:4lemma}\begin{tikzcd}
H_{2k+2} \rar \dar[equals] & L_{2k+2}^s \rar \dar & \cS^s \rar \dar & H_{2k+1} \rar \dar[equals] & L_{2k+1}^s \dar\\
H_{2k+2} \rar & L_{2k+2}^h \rar & \cS^h \rar & H_{2k+1} \rar & L_{2k+1}^h.
\end{tikzcd}\end{equation}
By Ranicki's version of Shaneson's thesis \cite{Ranicki_AlgL2}, Bak's vanishing result \cite{Bak_oddL}, and Bak--Kolster's vanishing result \cite[Corollary~4.7]{BK}, note the computations:
\begin{eqnarray*}
L_{2k+2}^s(C_\infty \times C_d) &=~ L_{2k+2}^s(C_d) \oplus L_{2k+1}^h(C_d) &=~ L_{2k+2}^s(C_d)\\
L_{2k+2}^h(C_\infty \times C_d) &=~ L_{2k+2}^h(C_d) \oplus L_{2k+1}^p(C_d) &=~ L_{2k+2}^h(C_d)\\[\medskipamount]
L_{2k+1}^s(C_\infty \times C_d) &=~ L_{2k+1}^s(C_d) \oplus L_{2k}^h(C_d) &=~ L_{2k}^h(C_d)\\
L_{2k+1}^h(C_\infty \times C_d) &=~ L_{2k+1}^h(C_d) \oplus L_{2k}^p(C_d)  &=~ L_{2k}^p(C_d).
\end{eqnarray*}
Substituting, we may now consider the following commutative diagram of groups:
\begin{equation}\label{eqn:9lemma}\begin{tikzcd}
{} & 0 \dar & 0 \dar & 0 \dar & {}\\
0 \rar & L_{2k+2}^s(C_d) / H_{2k+2} \rar \dar & \cS^s \rar \dar & H_{2k+1} / L_{2k}(1) \rar \dar[equals] & 0\\
0 \rar & L_{2k+2}^h(C_d) / H_{2k+2} \rar \dar & \cS^h \rar \dar & H_{2k+1} / L_{2k}(1) \rar \dar & 0\\
0 \rar & \wh{H}^{2k+2}(C_2;\Wh_1(C_d)) \rar \dar & \cS^h / \cS^s \rar \dar & 0 \rar \dar & 0\\
{} & 0 & 0 & 0 & {}
\end{tikzcd}\end{equation}

Clearly, the right column of \eqref{eqn:9lemma} is exact.
Next, Bass--Milnor--Serre showed that $\Wh_1(C_d)$ is a free abelian group and that the group-ring involution $(g \longmapsto g^{-1})$ on $\Z[C_d]$ induces the identity on $\Wh_1(C_d)$; refer to Remark~\ref{rem:Milnor_mistake}.
Then the subgroup of skew-symmetrics in $\Wh_1(C_d)$ is zero.
So $\wh{H}^{2k+3}(C_2; \Wh_1(C_d)) = 0$.
Recall the vanishing result above: $L_{2k+1}^s(C_d)=0$.
Therefore, by the Rothenberg sequence, the left column of \eqref{eqn:9lemma} is exact.
Then, finally, a diagram chase in \eqref{eqn:4lemma} shows that the middle column of \eqref{eqn:9lemma} is exact.

The generalized homology of a space cross a circle admits a canonical decomposition:
\[
H_{2k+1} ~=~ H_{2k+1}(X_{d,q};\bL\!\gens{1}) ~=~ H_{2k+1}(L_{d,q}^{2k-1};\bL\!\gens{1}) ~\oplus~ H_{2k}(L_{d,q}^{2k-1};\bL\!\gens{1}).
\]
By naturality, the assembly map $H_{2k+1} \longra L_{2k+1}^{s,h}$ for $X_{d,q}$ is the direct sum of the assembly maps $H_{2k+1} \longra L_{2k+1}^{s,h} = 0$ and $H_{2k} = L_{2k}(1) \longra L_{2k}^{h,p}$ for $L_{d,q}^{2k-1}$ by \eqref{eqn:evenhomology}.
So the kernel of the assembly map $H_{2k+1} \longra L_{2k+1}^{s,h}$ for $X_{d,q}$ is the summand $H_{2k+1}(L_{d,q}^{2k-1};\bL\!\gens{1}) \iso H_{2k+1}/L_{2k}(1)$.
Therefore, by exactness of rows in \eqref{eqn:4lemma}, the top and middle rows of \eqref{eqn:9lemma} are exact.

So, by the Nine Lemma, the bottom row of \eqref{eqn:9lemma} is exact.
Then, by Proposition~\ref{prop:SI},
\[
\frac{SI(X_{d,q})}{\text{evens}} = \wh{H}^{2k+2}(C_2;\Wh_1(C_d)) = \frac{\text{symmetrics in } \Wh_1(C_d)}{\text{evens in } \Wh_1(C_d)}.
\]
Therefore, we obtain the formula
\begin{equation}\label{eqn:SI}
SI(X_{d,q}) ~=~ \text{symmetrics in } \Wh_1(C_d) ~\oplus~ \text{skew-evens in } \Wh_0(C_d).
\end{equation}
The calculation of $\Wh_1(X_{d,q}) / SI(X_{d,q})$ now follows from Proposition~\ref{prop:Wh}.
\end{proof}

\begin{rem}\label{rem:summary}
Proposition~\ref{prop:hs}, Corollary~\ref{cor:h}, and Corollary~\ref{cor:SI} produce a based bijection
\[\begin{CD}
\Z^{(d-1)/2} ~\times~ H_0(C_2;\Wh_0(C_d)) @>>{\approx}> \cS^{h/s}_\TOP(X_{d,q}).
\end{CD}\]
\end{rem}

\section{Computation of the action of the group of self-equivalences}\label{sec:actions}

For any topological space $Z$, write $\Map(Z)$ for the topological monoid of continuous self-maps $Z \longra Z$.
Recall that $\hMod(Z) \subset \pi_0 \Map(Z)$ is the group of homotopy classes of self-homotopy equivalences.
A pair $(X_1,X_2)$ of based topological spaces satisfies the \emph{Induced Equivalence Property} if $[f] \in \hMod(X_1 \times X_2)$ implies $[p_j \circ f \circ i_j] \in \hMod(X_j)$ for both $j=1,2$, with based inclusion $i_j: X_j \longra X_1 \times X_2$ and projection $p_j: X_1 \times X_2 \longra X_j$.
We slightly simplify the following result of P\,I~Booth and P\,R~Heath \cite[Corollary~2.8]{BH}.
Write $[-,-]_0$ for the set of the based homotopy classes of maps preserving basepoint.

\begin{thm}[Booth--Heath]\label{thm:hMod}
Let $X$ be a connected CW complex equipped with a co-$H$-space structure.
Let $Y$ be a based connected CW complex with $[Y,X]_0 = 0 = [X \wedge Y, X]_0$.
If $(X,Y)$ satisfies the Induced Equivalence Property, there is a split exact sequence of groups:
\[\begin{tikzcd}
1 \rar & {[X,\Map(Y)]_0} \rar & \hMod(X \times Y) \rar & \hMod(X) \times \hMod(Y) \rar & 1.
\end{tikzcd}\]
\end{thm}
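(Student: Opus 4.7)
The plan is to construct the kernel embedding by sending a based map $\psi: X \to \Map(Y)$ (with basepoint $\id_Y$) to the self-map $F_\psi: X\times Y \to X\times Y$ defined by $F_\psi(x,y) := (x,\psi(x)(y))$. Since $X$ is connected and $\psi(x_0) = \id_Y$, the image of $\psi$ lies in the path-component of $\id_Y$ in $\Map(Y)$, which consists of self-homotopy-equivalences; hence $F_\psi$ represents a class in $\hMod(X\times Y)$. The outgoing map $\pi: [f] \mapsto ([p_1 f i_1],[p_2 f i_2])$ is well-defined precisely by the Induced Equivalence Property, and the rule $\sigma([g],[h]) := [g\times h]$ furnishes a canonical section with $\pi\sigma=\id$, exhibiting the splitting and the surjectivity of $\pi$.

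The heart of the argument is to identify $\ker\pi$ with the image of $\psi \mapsto [F_\psi]$. Given $[f]\in\ker\pi$, I would first show that the first coordinate $p_1 f: X\times Y\to X$ is homotopic to $p_1$. Use the cofibration $X\vee Y\hookrightarrow X\times Y$ with cofiber $X\wedge Y$: the restriction of $p_1 f$ to $X\vee Y$ has components $p_1 f i_1: X\to X$ and $p_1 f i_2: Y\to X$, the first homotopic to $\id_X$ by hypothesis and the second null-homotopic because $[Y,X]_0 = 0$; so this restriction is homotopic to $p_1|_{X\vee Y}$. The obstruction to extending the connecting homotopy from $X\vee Y$ to $X\times Y$ lives in $[X\wedge Y,X]_0 = 0$, so $p_1 f \simeq p_1$. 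Applying the homotopy extension property along $X\vee Y\hookrightarrow X\times Y$ and then along $\{x_0\}\times Y \hookrightarrow X\times Y$ upgrades $f$ to the strict form $(p_1,f_2)$ with $f_2(x_0,\,\cdot\,)=\id_Y$. The exponential adjoint of $f_2$ is the desired based $\psi$ with $[f]=[F_\psi]$.

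For the group structure, equip $[X,\Map(Y)]_0$ with the composition law $(\psi_1\cdot\psi_2)(x) := \psi_1(x)\circ\psi_2(x)$ inherited from the monoid structure on $\Map(Y)$; a direct computation yields $F_{\psi_1}\circ F_{\psi_2} = F_{\psi_1\cdot\psi_2}$, so the bijection onto $\ker\pi$ is a homomorphism. Injectivity requires showing that if $F_{\psi_0}\simeq F_{\psi_1}$ through a based homotopy, then $\psi_0\simeq\psi_1$ as based maps; equivalently, the representative $\psi$ produced in the previous paragraph is independent of the chosen homotopies.

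The main obstacle is precisely this last point: one must repeat the obstruction- and extension-theoretic argument on the cylinder $(X\times Y)\times I$ relative to its boundary. Given a based homotopy $H$ between two maps already in the strict form $(p_1,f_2^{(i)})$, one rectifies $H$ rel $\bdry$ so that $p_1 H = p_1\circ\mathrm{pr}_{X\times Y}$ and $H$ restricts to the identity on $\{x_0\}\times Y \times I$. The vanishing hypotheses $[Y,X]_0=0$ and $[X\wedge Y,X]_0=0$ are invoked once more in relative form to kill the resulting obstructions. This is where the co-$H$-structure on $X$ is exploited in earnest: it supplies the group-valued obstruction theory needed to organize the comparison of $H$ with the trivial extension.
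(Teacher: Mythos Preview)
The paper does not prove this statement. Theorem~\ref{thm:hMod} is quoted from Booth--Heath \cite[Corollary~2.8]{BH} with the preface ``We slightly simplify the following result,'' and the text passes immediately to Corollary~\ref{cor:hMod_circle} without any argument for the theorem itself. There is therefore no proof in the paper against which to compare your proposal.

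That said, a brief comment on your sketch. The overall architecture---kernel embedding $\psi\mapsto F_\psi$, quotient map $\pi([f])=([p_1fi_1],[p_2fi_2])$, section $\sigma([g],[h])=[g\times h]$, and rectification of an arbitrary $f$ via the cofibration $X\vee Y\hookrightarrow X\times Y\to X\wedge Y$---is the standard route and matches the Booth--Heath framework. One point you pass over is why $\pi$ is a group \emph{homomorphism}: for arbitrary $f,g$ one has $p_1(f\circ g)i_1 = p_1 f\circ(p_1gi_1,\,p_2gi_1)$, and to identify this with $(p_1fi_1)\circ(p_1gi_1)$ you must suppress the stray component $p_2gi_1:X\to Y$, for which there is no vanishing hypothesis. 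The clean way around this is to first rectify every class to a representative with $p_1f=p_1$ (your third paragraph), and only then read off $\pi$; alternatively one checks multiplicativity after the fact once the kernel and section are in hand. Your use of the co-$H$ structure is also slightly misplaced: in Booth--Heath it is what makes $[X,\Map(Y)]_0$ a group (via comultiplication on the source) and what allows the obstruction sets $[X\wedge Y,X]_0$ and $[Y,X]_0$ to be treated additively, rather than something invoked only at the injectivity stage.
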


\begin{cor}\label{cor:hMod_circle}
Let $Y$ be a nonempty connected CW complex.
Suppose that $\pi_1(Y)$ is finite.
Then there is a natural decomposition of groups:
\[
\hMod(S^1 \times Y) ~=~ \pi_1 \Map(Y) \rtimes (\hMod\,S^1 \times \hMod\,Y).
\]
Hence, each element of $\hMod(S^1 \times Y)$ is \emph{splittable:} it restricts to a self-equivalence of $1 \times Y$.
\end{cor}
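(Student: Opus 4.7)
The plan is to invoke Theorem~\ref{thm:hMod} of Booth--Heath with $X := S^1$, using its standard co-$H$-space structure $S^1 = \Sigma S^0$, and $Y$ as given (with any chosen basepoint). The two vanishing hypotheses follow from standard cohomological computations: since $S^1 \simeq K(\Z,1)$, the set $[Y,S^1]_0 = H^1(Y;\Z) = \Hom(\pi_1(Y)^{\mathrm{ab}},\Z)$ vanishes because $\pi_1(Y)$ is finite; by suspension--loop adjunction, $[S^1\wedge Y,S^1]_0 = [Y,\Omega S^1]_0 = [Y,\Z]_0 = 0$ since $Y$ is connected.

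Next I verify the Induced Equivalence Property for $(S^1,Y)$. Let $f$ be a based self-homotopy equivalence of $S^1\times Y$. Since $\pi_1(Y)$ is finite, the torsion subgroup of $\pi_1(S^1\times Y) = \Z\times\pi_1(Y)$ is exactly $\{0\}\times\pi_1(Y)$, hence characteristic; so $f_*$ preserves it and descends to $\pm\id$ on the torsion-free quotient $\Z$. Thus $(p_1\circ f\circ i_1)_*$ is $\pm\id$ on $\pi_1(S^1)$, which suffices for $p_1\circ f\circ i_1$ to be a self-equivalence of $S^1 = K(\Z,1)$. Meanwhile $(p_2\circ f\circ i_2)_*$ is an automorphism of $\pi_1(Y)$; for $k\geqslant 2$, the maps $(i_2)_*$ and $(p_2)_*$ on $\pi_k$ are mutually inverse isomorphisms with $\pi_k(S^1\times Y) = \pi_k(Y)$ because $\pi_k(S^1)=0$, so $(p_2\circ f\circ i_2)_* = f_*|_{\pi_k(Y)}$ is again an automorphism. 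Whitehead's theorem for the CW complex $Y$ then yields $p_2\circ f\circ i_2 \in \hMod(Y)$.

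Applying Theorem~\ref{thm:hMod} now gives a split exact sequence
\[
1 \longra [S^1,\Map(Y)]_0 \longra \hMod(S^1\times Y) \longra \hMod(S^1)\times\hMod(Y) \longra 1,
\]
and the standard identification $[S^1,\Map(Y)]_0 = \pi_1(\Map(Y),\id_Y)$ yields the claimed semidirect product decomposition. For the splittability assertion, the chosen splitting sends $(\phi,\psi)$ to $\phi\times\psi$, whose restriction to $\{1\}\times Y$ is $\psi$; a loop $\omega: S^1\to\Map(Y)$ based at $\id_Y$ corresponds to the equivalence $(z,y)\mapsto(z,\omega(z)(y))$, which is the identity on $\{1\}\times Y$. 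Hence every element of $\hMod(S^1\times Y)$ restricts on $\{1\}\times Y$ to a self-equivalence. The principal subtlety is verifying the Induced Equivalence Property---forced on $\pi_1$ by the characteristic torsion subgroup, and on higher $\pi_k$ by the vanishing of $\pi_k(S^1)$---without which Theorem~\ref{thm:hMod} is inapplicable.
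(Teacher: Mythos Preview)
Your proof is correct and follows essentially the same approach as the paper: apply the Booth--Heath theorem with $X=S^1$, check the two vanishing hypotheses via $S^1\simeq K(\Z,1)$, and verify the Induced Equivalence Property. The only difference is cosmetic---you verify the IEP using the characteristic torsion subgroup of $\Z\times\pi_1(Y)$ (which immediately gives automorphisms on both factors), whereas the paper argues via the Hopfian property of $\Z$ and the co-Hopfian property of the finite group $\pi_1(Y)$; both routes are equally valid and equally short.
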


This is false without the hypothesis, since $\hMod(S^1 \times S^1) = GL_2(\Z) \not\iso \Z \rtimes (\{\pm 1\} \times \Z)$.

\begin{proof}[Proof of Corollary~\ref{cor:hMod_circle}]
The circle $X=S^1$ is a co-$H$-space, and it is a model of $K(\Z,1)$.
Note $[Y,X]_0 = H^1(Y;\Z) = 0$ and $[X \wedge Y, X]_0 = H^1(S^1 \wedge Y;\Z) \iso \wt{H}_0(Y;\Z) = 0$.
By Theorem~\ref{thm:hMod}, it remains to show that $(S^1,Y)$ satisfies the Induced Equivalence Property.
Let $f: S^1 \times Y \longra S^1 \times Y$ be a based homotopy equivalence.

On the one hand, to prove that $p_1 \circ f \circ i_1: S^1 \longra S^1$ is a homotopy equivalence, we must show that induced map on the Hopfian group $\pi_1(S^1)=C_\infty$ is surjective.
Since $f_\#$ is surjective, there exists $(a,b) \in \pi_1(S^1) \times \pi_1(Y)$ such that $f_\#(a,b)=(t,1)$, where $t$ generates $\pi_1(S^1)$.
Then, since $\Hom(\pi_1 Y, \pi_1 S^1) = 1$, note $(p_1)_\#(f_\#(1,b))=1$.
So $(p_1)_\#(f_\#(a,1))=t$.

On the other hand, $f$ induces an isomorphism on $\pi_n(S^1 \times Y)=\pi_n(Y)$ for all $n > 1$.
Since $Y$ is a CW complex, by the Whitehead theorem, it remains to show that $p_2 \circ f \circ i_2$ is injective on the co-Hopfian group $\pi_1(Y)$.
For all $b \in \pi_1(Y)$, recall $(p_1)_\#(f_\#(1,b))=1$.
Then $(p_2 \circ f \circ i_2)_\#(b)=1$ if and only if $f_\#(1,b)=1$, if and only if $b=1$ since $f_\#$ is injective.
\end{proof}

\begin{rem}
The corollary below is parallel to $p=2$; Jahren--Kwasik \cite[3.5]{JK} showed
\[
\hMod(S^1 \times \mathbb{RP}^{2k-1}) ~=~
\begin{cases}
C_2 \times (C_2)^2 & \text{if } k \equiv 0 \!\!\!\pmod{2}\\
C_2 \times C_4 & \text{if } k \equiv 1 \!\!\!\pmod{2}
\end{cases}
\;\times\; (C_2 \times C_2).
\]
Unlike below, the first factor (the $C_2$ on the left) is not represented by a diffeomorphism.
The very last $C_2$ factor is represented by the diffeomorphism of $\RP^n$ that reflects in $\RP^{n-1}$.
\end{rem}

\begin{cor}\label{cor:hMod_lens}
Let $d >1 $ be odd, $q$ coprime to $d$, and $k>1$.
We have a metabelian group
\[
\hMod(S^1 \times L_{d,q}^{2k-1}) ~=~ A \rtimes (C_2 \times B),
\]
where $A$ is abelian of order $2d^2$, and where $B$ is the exponent $e := \gcd(2k,\varphi(d))$ subgroup of $\Aut(C_d)$.\footnote{Classically, it is known that $\Aut(C_d)$ has order $\varphi(d)$. If $d$ is an odd-prime power, then $\Aut(C_d)$ is cyclic.  Conversely, $\Aut(C_d)$ contains a product of $C_2$'s, one for each odd-prime factor of $d$, such as: $\Aut(C_{15})=C_2 \times C_4$.}
Furthermore, the subgroup $A \rtimes C_2$ is generated by the three diffeomorphisms
\begin{align*}
\rho ~:~ (z,[u]) \longmapsto~ & (z,[z u_1: u_2: \ldots : u_k])\\
\eps ~:~ (z,[u]) \longmapsto~ & (z,[z^{q/d} u_1: z^{1/d} u_2: \ldots : z^{1/d} u_k])\\
{}^- \times \id_{L^n} ~:~ (z,[u]) \longmapsto~ & (\ol{z},[u]).
\end{align*}
\end{cor}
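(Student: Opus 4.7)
The plan is to apply Corollary~\ref{cor:hMod_circle} with $Y = L^{2k-1}_{d,q}$. Since $\pi_1(Y) = C_d$ is finite, its hypothesis is satisfied, yielding the split decomposition
\[
\hMod\bigl(S^1 \times L^{2k-1}_{d,q}\bigr) ~=~ A \rtimes \bigl( C_2 \times B \bigr),
\]
where $A := \pi_1(\Map(L^{2k-1}_{d,q}), \id)$ is abelian, $B := \hMod(L^{2k-1}_{d,q})$, and the middle factor is $\hMod(S^1) \iso \Aut(\Z) = C_2$, whose nontrivial element is represented by complex conjugation. This identifies the $C_2$ factor with the third listed generator $\ol{~}\times \id_{L^n}$.

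Next, I would analyze $B$. By Wall's classification \cite[Theorem~4.3]{Wall_PD}, already invoked in Proposition~\ref{prop:crosscircle}, a self-equivalence of $L^{2k-1}_{d,q}$ is determined up to homotopy by its action $s \longmapsto s^a$ on $\pi_1 = C_d$; the corresponding $a \in \Aut(C_d) = (\Z/d)^\times$ lifts to a self-equivalence iff it preserves the Postnikov invariant $q \cdot \iota^k \in H^{2k}(C_d;\Z)$ up to sign, i.e., $q a^k \equiv \pm q \pmod d$. Since $\gcd(q,d) = 1$, this reduces to $a^k \equiv \pm 1 \pmod d$, which carves out the claimed subgroup of $\Aut(C_d)$.

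Next, I would compute $A$. The loop classes $[\rho_j] \in A$ from Equation~\eqref{eqn:rhos} each have order exactly $2d$ by the Hsiang--Jahren analysis \cite[Proposition~3.1]{HJ} (already used in the proof of Lemma~\ref{lem:eps_power}); their theorem further identifies $A$ as abelian of order $2d^2$, with relations coming from the fact that the diagonal rotation $\zeta_d \cdot (u_1,\ldots,u_k)$ acts trivially on $L^{2k-1}_{d,q}$. To finish, the diffeomorphism $\rho$ represents $[\rho_1] \in A$ of order $2d$; the twist $\eps$ restricts to $\id$ on both $S^1 \times \{[u_0]\}$ and $\{1\} \times L^n$, hence projects trivially under $\hMod(X_{d,q}) \longra \hMod(S^1) \times \hMod(L^n)$ and represents an element $[\eps] \in A$ satisfying $[\eps]^d = [\rho_1]^q [\rho_2]\cdots[\rho_k]$ by Equation~\eqref{eqn:eps_power}. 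Combined with $\gcd(q,d)=1$ and the Hsiang--Jahren description of $A$, the classes $[\rho_1]$ and $[\eps]$ together generate $A$; finally $\ol{~}\times \id_{L^n}$ contributes the $C_2 = \hMod(S^1)$ factor.

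The main obstacle is the Hsiang--Jahren computation $|A| = 2d^2$ and the exact identification of relations among the $[\rho_j]$. A secondary subtlety is showing that $[\rho_1]$ and $[\eps]$ (rather than the full collection $[\rho_1], \ldots, [\rho_k]$) suffice to generate $A$; this requires the coprimality of $q$ with $d$ together with a precise understanding of how $[\eps]$ sits inside the Hsiang--Jahren presentation.
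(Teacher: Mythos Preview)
Your overall strategy---apply Corollary~\ref{cor:hMod_circle}, identify $\hMod(S^1)$ with conjugation, compute $B=\hMod(L^n)$ via the Postnikov invariant, and compute $A=\pi_1\Map(L^n)$ via Hsiang--Jahren---matches the paper's exactly, and your treatment of $B$ is correct (the paper cites Cohen \cite[(29.5)]{Cohen} rather than Wall, but the content is the same).

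The genuine gap is in your handling of $A$, precisely at the two places you flag as obstacles. Hsiang--Jahren \cite[Proposition~3.1]{HJ} compute the \emph{based} group $\pi_1\Map_0(L^n)$: it has order $2d$, generated by $[\rho]_0$. They do not directly compute the unbased $A=\pi_1\Map(L^n)$ as having order $2d^2$; that step is supplied by the paper, not by HJ. The paper's device is the evaluation fibration $\Map_0(L^n) \to \Map(L^n) \to L^n$, which (since $\pi_2(L^n)=0$ and based homotopies may be promoted from unbased ones) yields a short exact sequence of abelian groups
\[
1 \longra \pi_1\Map_0(L^n) \longra \pi_1\Map(L^n) \longra \pi_1(L^n) \longra 1.
\]
Since $\eps_\#(t)=ts$, the class $[\eps]\in A$ maps to the generator $s$ of $\pi_1(L^n)=C_d$. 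This single observation simultaneously gives $|A|=2d\cdot d=2d^2$ and proves that $[\rho]_0$ and $[\eps]$ generate $A$, dissolving both of your stated obstacles without needing the relation $[\eps]^d=[\rho_1]^q[\rho_2]\cdots[\rho_k]$ or any finer structure of the $[\rho_j]$. Your proposed route through that relation and $\gcd(q,d)=1$ does not obviously yield generation: it expresses $[\eps]^d$ in terms of all the $[\rho_j]$, but gives no leverage on $[\eps]$ modulo $\langle[\rho_1]\rangle$.
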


\begin{proof}
Since the fundamental group $\pi_1(L^n)=C_d$ is finite, by Corollary~\ref{cor:hMod_circle}, we have
\[
\hMod(S^1 \times L^n) ~=~ \pi_1\Map(L^n) \rtimes (\hMod\,S^1 \times \hMod\,L^n).
\]
The subgroup $\hMod(S^1)$ is generated by the homotopy class of the diffeomorphism ${}^- \times \id_{L^n}$.
Since $d$ is odd, by \cite[(29.5)]{Cohen}, any homotopy equivalence $h: L^n \longra L^n$ is classified uniquely by the induced automorphism $h_\#: s \longmapsto s^a$ on $\pi_1(L^n)$ where $a^k \equiv \deg(h) \pmod{d}$ and $\deg(h) = \pm 1$; any $a$ with $a^k \equiv \pm 1 \pmod{d}$ is induced by an equivalence $h_a: L^n \longra L^n$.
That is, since $a^k \equiv \pm 1 \pmod{d}$ if and only if $a^{2k} \equiv 1 \pmod{d}$, the homomorphism $\#: \hMod(L^n) \longra \Out(\pi_1 L^n) = \Out(C_d)$ is injective with image the subgroup $B$ of exponent $e$.

Consider the fibration sequence $\Map_0(L^n) \longra \Map(L^n) \longra L^n$, where $\Map_0  \subseteq \Map$ is the topological submonoid of basepoint-preserving self-maps.
Since $\pi_2(L^n)=0$, and since any unbased homotopy between two based self-maps of a connected CW complex is relatively homotopic to a based homotopy, there is an exact sequence of abelian groups:
\begin{equation}\label{eqn:based}\begin{tikzcd}
1 \rar & \pi_1\Map_0(L^n) \rar & \pi_1\Map(L^n) \rar & \pi_1(L^n) \rar & 1.
\end{tikzcd}\end{equation}
On the one hand, Hsiang--Jahren \cite[Proposition~3.1]{HJ} showed that the forgetful map $\pi_1\Diff_0(L^n) \longra \pi_1\Map_0(L^n)$ is surjective with image of order $2d$ generated by the based homotopy class $[\rho]_0$ of the diffeomorphism $\rho$.
On the other hand, since $\eps_\#(t)=ts$, the unbased homotopy class $[\eps]$ of the diffeomorphism $\eps$ maps to the generator $s$ of $\pi_1(L^n)$.
Therefore $\pi_1\Map(L^n)$ is an abelian group of order $2d^2$ generated by $[\rho]_0$ and $[\eps]$.
\end{proof}

To find $\cM^{h/s}_\TOP(X_{d,q})$, we now compute the action of the group $\hMod(X_{d,q})$ on $\cS^{h/s}_\TOP(X_{d,q})$.

\begin{proof}[Proof of Theorem~\ref{thm:homeo}]
First, we show the order $d^2$ subgroup of $\hMod(X_{d,q})$ acts trivially.
By Proof~\ref{cor:hMod_lens}, this subgroup is generated by the classes $[\rho^2]$ and $[\eps^2]$ of diffeomorphisms.
Let $[M,f] \in \cS^{h/s}_\TOP(X_{d,q})$; write $\ol{f}: X_{d,q} \longra M$ for a homotopy inverse of $f: M \longra X_{d,q}$.
For any element $[\phi] \in \hMod(X_{d,q})$, consider the \emph{pullback} $f^*[\phi] := [\ol{f} \circ \phi \circ f] \in \hMod(M)$.
Recall, by Proposition~\ref{prop:transitive}, that each pullback $f^*[\eps^2]$ is represented by a homeomorphism.
Thus $[\eps^2]$ acts trivially on the hybrid structure set $\cS^{h/s}_\TOP(X_{d,q})$.

The overall argument for $[\rho^2]$ is similar but slightly simpler to that of $[\eps^2]$ in Section~\ref{sec:eps}.
By the composition formula for Whitehead torsion \cite[Lemma~7.8]{Milnor_hcob}, and since $\rho_\# = \id$,
\[
\tau(f^*\rho)
~=~ \tau(\ol{f}) + \ol{f}_* (\tau(\rho) + \rho_* \tau(f))
~=~ -f^{-1}_*\tau(f) + f^{-1}_* (0 + \tau(f))
~=~ 0 ~\in~ \Wh_1(\pi_1 M).
\]
So $[M,f^*\rho] \in \cS^s_\TOP(M)$.
Similar to Proposition~\ref{prop:Shaneson}, there is a direct sum decomposition
\[
\cS^s_\TOP(X_{d,q}) ~\iso~ \cS^s_\TOP(I \times L^n) ~\oplus~ \cS^h_\TOP(L^n).
\]
Since $\rho$ restricts to $\id$ on $1 \times L^n \subset S^1 \times L^n$, there is an induced commutative diagram
\[\begin{tikzcd}
0 \rar & \cS^s_\TOP(I \times L^n) \rar{glue} \dar{(\rho|)_*} & \cS^s_\TOP(X_{d,q}) \rar{split} \dar{\rho_*} & \cS^h_\TOP(L^n) \rar \dar[dashed]{[\rho_*]} & 0\\
0 \rar & \cS^s_\TOP(I \times L^n) \rar{glue} & \cS^s_\TOP(X_{d,q}) \rar{split} & \cS^h_\TOP(L^n) \rar & 0.
\end{tikzcd}\]
The decomposition is compatible with those of $L^s_*(\pi_1 X_{d,q})$ and $H_*(X_{d,q}; \bL\!\gens{1})$, inducing
\[\begin{tikzcd}
0 \rar & \wt{L}^h_{2k}(C_d) \rar \dar{[(\rho_\#)_*] = \id} & \cS^h_\TOP(L^n) \rar \dar{[\rho_*]} & H_{2k-1}(L^n;\bL\!\gens{1}) \rar \dar & 0\\
0 \rar & \wt{L}^h_{2k}(C_d) \rar & \cS^h_\TOP(L^n) \rar & H_{2k-1}(L^n;\bL\!\gens{1}) \rar & 0.
\end{tikzcd}\]
Recall from Proof~\ref{lem:projective} that $H_{2k-1}(L^n;\bL\!\gens{1})$ is an abelian group annihilated by a power of $d$.
A similar argument to that proof shows that $\cS^h_\TOP(L^n)$ has no `$d$-torsion'.\footnote{This lack of `$d$-torsion' is true for the $h$-structure group, despite that $\wt{L}^h_{2k}(C_d)$ may now have some $2$-torsion.}
So $[\rho_*] = \id$ on $\cS^h_\TOP(L^n)$.
But $\cS^s_\TOP(I \times L^n)=0$ by Lemma~\ref{lem:crossI}.
Therefore $\rho_* = \id$ on $\cS^s_\TOP(X_{d,q})$.
Then
\begin{eqnarray*}
(f^*\rho^2)_* &=~ \ol{f}_* \circ (\rho^2)_* \circ f_* ~=~ \ol{f}_* \circ \id \circ f_* &=~ \id : \cS^s_\TOP(M) \longra \cS^s_\TOP(M)\\
(f^*\rho^2)^d &\simeq~ \ol{f} \circ \rho^{2d} \circ f ~\simeq~ \ol{f} \circ \id \circ f &\simeq~ \id : M \longra M.
\end{eqnarray*}
Then, by Ranicki's composition formula for simple structure groups \cite{Ranicki_StructuresComposition}, note
\[
d [M,f^*\rho^2] ~=~ \sum_{j=0}^{d-1} [M,f^*\rho^2] ~=~ \sum_{j=0}^{d-1} (f^*\rho^2)_*^j [M,f^*\rho^2] ~=~ [M,(f^*\rho^2)^d] ~=~ 0 \in \cS^s_\TOP(M).
\]
By Equation~\eqref{eqn:forget} and Corollary~\ref{cor:h}, $\cS^s_\TOP(M) \iso \cS^s_\TOP(X_{d,q})$ is a sum of copies of $\Z/2$ and $\Z$.
So $[M,f^*\rho^2]=0$ since $d$ is odd.
That is, $f^*\rho^2$ is $s$-bordant to $\id$.
By the $s$-cobordism theorem, $f^*\rho^2$ is homotopic to a homeomorphism.
Thus $[\rho^2]$ acts trivially on $\cS^{h/s}_\TOP(X_{d,q})$.
Therefore, from Corollary~\ref{cor:hMod_lens}, the order $d^2$ subgroup of $\hMod(X_{d,q})$ acts trivially.

Now, this induces a left action of the quotient group $C_2 \times C_2 \times B$ on the set $\cS^{h/s}_\TOP(X_{d,q})$.
Thus, by Remark~\ref{rem:summary}, we are done, since this group has order $4e = 8 \gcd(k,\varphi(d)/2)$.
\end{proof}

\begin{rem}\label{rem:cardinality}
Let $p\neq 2$ be prime.
This quotient group \emph{does not act with uniform isotropy,} unlike the order $p^2$ subgroup.
To conclude, we discuss the three generators of $C_2 \times C_2 \times C_e$.
\begin{enumerate}
\item
The above methods show that post-composition with $\rho^p$ is the identity on the $h$-cobordism structure group.
There may be a `cross-effect' on the $s$-cobordism structure group, that is, a nonzero component of $\rho^p_*$ from the free part of $\cS^s_\TOP(X_{p,q})$ to the 2-torsion part.
The author is unaware of the effect within $H_0(C_2;\Cl_p)$-orbits.

\item
Since complex conjugation ${}^-$ reverses orientation on the symmetric Poincar\'e complex $\sigma^*(S^1) \in L^1(C_\infty)$, post-composition with the diffeomorphism ${}^- \times \id_{L_{p,q}}$ is negation\footnote{\cite[Lemma~3.7]{JK} falsely implies that ${}^- \times \id_{\RP^n}$ induces the identity on $\cS_\TOP(S^1 \times \RP^n)$, rather than negation.  The proof's error is: Ranicki's $\bL^\cdot$-orientation of a manifold is preserved by tangential homotopy equivalences. Call a manifold \emph{$w_1$-oriented} if an orientation is chosen on the $\Ker(w_1)$-cover \cite[p216]{Wall_PD}. The correction is: the $\bL^\cdot$-orientation of a $w_1$-oriented manifold is preserved by $w_1$-oriented tangential homotopy equivalences \cite[16.16, App.~A]{Ranicki_TopMan}.  For example, the diffeomorphism ${}^- \times \id_{\RP^n}$ is tangential with $\mu=+1$ but reverses $w_1$-orientation.} on the $h$-cobordism structure group $\cS^h_\TOP(X_{p,q}) \xleftarrow{\iso} \cS^p_\TOP(L_{p,q}) = \Z^{(p-1)/2}$.
Then ${}^- \times \id_{L_{p,q}}$ must act freely away from the $H_0(C_2;\Cl_p)$-orbit of the basepoint $[X_{p,q},\id]$ of $\cS^{h/s}_\TOP(X_{p,q})$.
But ${}^- \times \id_{L_{p,q}}$ must fix $[X_{p,q},\id]$, since any two homeomorphisms $M \longra X_{p,q}$ are $s$-bordant.\footnote{Suppose there exists $[\alpha] \neq 0 \in H_0(C_2;\Cl_p)$, for example if $p=29$ by Remark~\ref{rem:class_comp}. It is unlikely that ${}^- \times \id_{L_{p,q}}$ fixes $[X_{p,q},\id] \cdot [\alpha]$, since the $h$-cobordism $W_\alpha$ on $X_{p,q}$ with torsion $\alpha \in \Wh_1(C_\infty \times C_p)$ has projection $\alpha \neq 0 \in \Wh_0(C_p) = \Cl_p$.  So the $h$-cobordism is unlikely splittable along $1 \times L_{p,q}$; compare with \cite[6.1, 6.3]{FH_split}.}
So ${}^- \times \id_{L_{p,q}}$ acts non-uniformly on $\cS^{h/s}_\TOP(X_{p,q})$.

\item
Let $a$ be a primitive $e$-th root of unity in the field $\F_p$.
Recall, from Proof~\ref{cor:hMod_lens}, that the homotopy equivalence $h_a: L_{p,q} \longra L_{p,q}$ uniquely induces $s \longmapsto s^a$ on fundamental group.
Note $\id_{S^1} \times h_a: X_{p,q} \longra X_{p,q}$ has zero Whitehead torsion, by the product formula, but the author suspects that $\id_{S^1} \times h_a$ is often non-representable by a homeomorphism of $X_{p,q}$.\footnote{Using a splitting argument along $1 \times L_{p,q}$: if $\id_{S^1} \times h_a$ is homotopic to a homeomorphism, then $h_a$ is $h$-bordant to a homeomorphism, if and only if the Whitehead torsion $\tau(h_a)$ is divisible by two in $\Wh_1(C_p) \iso \Z^{(p-3)/2}$.  Note $h_a$ is homotopic to a homeomorphism if and only if $\tau(h_a)=0$ \cite[\S31]{Cohen}, if and only if $e=2$ \cite[(30.1)]{Cohen}.}
On the other hand, the automorphism of $\cS_\TOP^h(X_{p,q})$ induced by $\id_{S^1} \times h_a$ is identified with the automorphism of $\cS_\TOP^p(L_{p,q}) \iso \Z^{(p-1)/2}$ induced by $h_a$, given by a permutation matrix $\Pi_a$ of order $e/2$ determined by $a$.
Both these issues complicate the systematic use of Ranicki's composition formula:
\[
[(\id_{S^1} \times h_a) \circ (f: M \longra X_{p,q})] ~=~ [\id_{S^1} \times h_a] + \Pi_a [f] ~\in~ \cS_\TOP^h(X_{p,q}) \iso \Z^{(p-1)/2}.
\]
\end{enumerate}
\end{rem}

\subsection*{Acknowledgments}

I wish to thank manifold topologists Jim Davis, Jonathan Hillman, and Shmuel Weinberger for helpful input on Proposition~\ref{prop:crosscircle}.
Also I am grateful to computational number theorists John Miller and Ren\'e Schoof for discussions about Remark~\ref{rem:class_comp}.
Remark~\ref{rem:classical} is a response to Ian Hambleton's questions during my talk at U~Bonn (2014).
I thank Bj\o{}rn Jahren for catching typos in Sections~\ref{sec:hcobordism} and \ref{sec:homeomorphism}.
Finally, the referee found several other typos and also corrected applications of the references (e.g., leading to Remark~\ref{rem:Milnor_mistake}).

\bibliographystyle{alpha}
\bibliography{FreeCl_onS1Sn}

\begin{thebibliography}{Ran73b}

\bibitem[And04]{Anderson}
Michael~T. Anderson.
\newblock Geometrization of 3-manifolds via the {R}icci flow.
\newblock {\em Notices Amer. Math. Soc.}, 51(2):184--193, 2004.

\bibitem[AS68]{AtiyahSinger}
M.~F. Atiyah and I.~M. Singer.
\newblock The index of elliptic operators, {III}.
\newblock {\em Ann. of Math. (2)}, 87:546--604, 1968.

\bibitem[Bak75]{Bak_oddL}
Anthony Bak.
\newblock Odd dimension surgery groups of odd torsion groups vanish.
\newblock {\em Topology}, 14(4):367--374, 1975.

\bibitem[Bak78]{Bak_evenL}
Anthony Bak.
\newblock The computation of even dimension surgery groups of odd torsion
  groups.
\newblock {\em Comm. Algebra}, 6(14):1393--1458, 1978.

\bibitem[Bas68]{Bass_book}
Hyman Bass.
\newblock {\em Algebraic {$K$}-theory}.
\newblock W. A. Benjamin, Inc., New York-Amsterdam, 1968.

\bibitem[BH90]{BH}
P.~I. Booth and P.~R. Heath.
\newblock On the groups {$\mathcal E(X\times Y)$} and {$\mathcal
  E^B_B(X\times_BY)$}.
\newblock In {\em Groups of self-equivalences and related topics ({M}ontreal,
  {PQ}, 1988)}, volume 1425 of {\em Lecture Notes in Math.}, pages 17--31.
  Springer, Berlin, 1990.

\bibitem[BK82]{BK}
Anthony Bak and Manfred Kolster.
\newblock The computation of odd-dimensional projective surgery groups of
  finite groups.
\newblock {\em Topology}, 21(1):35--63, 1982.

\bibitem[Cha74]{Chapman}
T.~A. Chapman.
\newblock Topological invariance of {W}hitehead torsion.
\newblock {\em Amer. J. Math.}, 96:488--497, 1974.

\bibitem[Coh73]{Cohen}
Marshall~M. Cohen.
\newblock {\em A course in simple-homotopy theory}.
\newblock Springer-Verlag, New York, 1973.
\newblock Graduate Texts in Mathematics, Vol. 10.

\bibitem[FH73]{FH_split}
F.~T. Farrell and W.~C. Hsiang.
\newblock Manifolds with {$\pi_1 = G \rtimes_\alpha \mathbb{Z}$}.
\newblock {\em Amer. J. Math.}, 95:813--848, 1973.

\bibitem[FQ90]{FQ_book}
Michael~H Freedman and Frank Quinn.
\newblock {\em Topology of 4-manifolds}, volume~39 of {\em Princeton
  Mathematical Series}.
\newblock Princeton University Press, Princeton, NJ, 1990.

\bibitem[Har87]{Harmon}
Dennis~R Harmon.
\newblock {$NK_1$} of finite groups.
\newblock {\em Proc. Amer. Math. Soc.}, 100(2):229--232, 1987.

\bibitem[HJ83]{HJ}
W.~C. Hsiang and B.~Jahren.
\newblock A remark on the isotopy classes of diffeomorphisms of lens spaces.
\newblock {\em Pacific J. Math.}, 109(2):411--423, 1983.

\bibitem[JK11]{JK}
Bj{\o}rn Jahren and S{\l}awomir Kwasik.
\newblock Free involutions on {$S^1\times S^n$}.
\newblock {\em Math. Ann.}, 351(2):281--303, 2011.

\bibitem[JK15]{JK_hcob}
Bj{\o}rn Jahren and S{\l}awomir Kwasik.
\newblock How different can {$h$}-cobordant manifolds be?
\newblock {\em Bulletin of the London Mathematical Society}, 47(4):617--630,
  2015.

\bibitem[KS77]{KS}
Robion~C. Kirby and Laurence~C. Siebenmann.
\newblock {\em Foundational essays on topological manifolds, smoothings, and
  triangulations}.
\newblock Princeton University Press, Princeton, N.J., 1977.
\newblock With notes by John Milnor and Michael Atiyah, Annals of Mathematics
  Studies, No. 88.

\bibitem[Mil66]{Milnor_hcob}
J.~Milnor.
\newblock Whitehead torsion.
\newblock {\em Bull. Amer. Math. Soc.}, 72:358--426, 1966.

\bibitem[Mil15]{Miller}
John~C. Miller.
\newblock Real cyclotomic fields of prime conductor and their class numbers.
\newblock {\em Math. Comp.}, 84:2459--2469, 2015.

\bibitem[MW11]{MW}
Tibor Macko and Christian Wegner.
\newblock On the classification of fake lens spaces.
\newblock {\em Forum Math.}, 23(5):1053--1091, 2011.

\bibitem[Oli88]{Oliver_book}
Robert Oliver.
\newblock {\em Whitehead groups of finite groups}, volume 132 of {\em London
  Mathematical Society Lecture Note Series}.
\newblock Cambridge University Press, Cambridge, 1988.

\bibitem[Ran73a]{Ranicki_AlgL2}
A.~A. Ranicki.
\newblock Algebraic {$L$}-theory, {II}: {L}aurent extensions.
\newblock {\em Proc. London Math. Soc. (3)}, 27:126--158, 1973.

\bibitem[Ran73b]{Ranicki_AlgL3}
A.~A. Ranicki.
\newblock Algebraic {$L$}-theory, {III}: {T}wisted {L}aurent extensions.
\newblock In {\em Algebraic {K}-theory, {III}: {H}ermitian {K}-theory and
  geometric application ({P}roc. {C}onf. {S}eattle {R}es. {C}enter, {B}attelle
  {M}emorial {I}nst., 1972)}, pages 412--463. Lecture Notes in Mathematics,
  Vol. 343. Springer, Berlin, 1973.

\bibitem[Ran92]{Ranicki_TopMan}
A.~A. Ranicki.
\newblock {\em Algebraic {$L$}-theory and topological manifolds}, volume 102 of
  {\em Cambridge Tracts in Mathematics}.
\newblock Cambridge University Press, Cambridge, 1992.

\bibitem[Ran09]{Ranicki_StructuresComposition}
Andrew Ranicki.
\newblock A composition formula for manifold structures.
\newblock {\em Pure Appl. Math. Q.}, 5(2, Special Issue: In honor of Friedrich
  Hirzebruch. Part 1):701--727, 2009.

\bibitem[Sch85]{CONFERENCE}
Reinhard Schultz, editor.
\newblock {\em Group actions on manifolds}, volume~36 of {\em Contemporary
  Mathematics}, Providence, RI, 1985. American Mathematical Society.

\bibitem[Sch98]{Schoof_imaginary}
Ren{\'e} Schoof.
\newblock Minus class groups of the fields of the {$l$}th roots of unity.
\newblock {\em Math. Comp.}, 67(223):1225--1245, 1998.

\bibitem[ST80]{ST}
Herbert Seifert and William Threlfall.
\newblock {\em Seifert and {T}hrelfall: a textbook of topology}, volume~89 of
  {\em Pure and Applied Mathematics}.
\newblock Academic Press, Inc. [Harcourt Brace Jovanovich, Publishers], New
  York-London, 1980.
\newblock Translated from the German edition of 1934 by Michael A. Goldman,
  With a preface by Joan S. Birman, With ``Topology of $3$-dimensional fibered
  spaces'' by Seifert, Translated from the German by Wolfgang Heil.

\bibitem[Tha10]{Thatcher}
Courtney~M. Thatcher.
\newblock On free {$\mathbb{Z}_p$}-actions on products of spheres.
\newblock {\em Geom. Dedicata}, 148:391--415, 2010.

\bibitem[Wal67]{Wall_PD}
C.~T.~C. Wall.
\newblock Poincar\'e complexes.
\newblock {\em Ann. of Math. (2)}, 86:213--245, 1967.

\bibitem[Wal99]{Wall_book}
C.~T.~C. Wall.
\newblock {\em Surgery on compact manifolds}, volume~69 of {\em Mathematical
  Surveys and Monographs}.
\newblock American Mathematical Society, Providence, RI, second edition, 1999.
\newblock Edited and with a foreword by A. A. Ranicki.

\bibitem[Was97]{Washington_book}
Lawrence~C. Washington.
\newblock {\em Introduction to cyclotomic fields}, volume~83 of {\em Graduate
  Texts in Mathematics}.
\newblock Springer-Verlag, New York, second edition, 1997.

\end{thebibliography}

\end{document}